\theoremstyle{plain}
\newtheorem{thm}{Theorem}[section]
\newtheorem{cor}[thm]{Corollary}
\newtheorem{lem}[thm]{Lemma}
\newtheorem{prop}[thm]{Proposition}
\theoremstyle{definition}
\newtheorem{defi}[thm]{Definition}
\theoremstyle{remark}
\newtheorem{rem}[thm]{Remark}
\numberwithin{equation}{section}
\newcommand{\de}{\partial}
\newcommand{\fls}{(-\Delta)^s}
\newcommand{\R}{\mathbb{R}}
\newcommand{\N}{\mathbb{N}}
\newcommand{\Sp}{\mathbb{S}}
\newcommand{\average}{{\mathchoice {\kern1ex\vcenter{\hrule height.4pt
width 6pt depth0pt} \kern-9.7pt} {\kern1ex\vcenter{\hrule
height.4pt width 4.3pt depth0pt} \kern-7pt} {} {} }}
\def\R{\mathbb{R}}
\begin{document}

\title[The fractional obstacle problem with critical drift]{The obstacle problem for \\ the fractional Laplacian with critical drift}

\author{Xavier Fernández-Real}

\address{ETH Z\"{u}rich, Department of Mathematics, Raemistrasse 101, 8092 Z\"{u}rich, Switzerland}

\email{xavierfe@math.ethz.ch}

\author{Xavier Ros-Oton}

\address{University of Texas at Austin, Department of Mathematics, 2515 Speedway, TX 78712 Austin, USA}

\email{ros.oton@math.utexas.edu}

\keywords{Obstacle problem, fractional Laplacian, nonlocal operators.}

\thanks{The first author is supported by ERC grant ``Regularity and Stability in Partial Differential Equations
(RSPDE)'' and a fellowship from ``Obra Social la Caixa''. The second author is supported by NSF Grant DMS-1565186 and by MINECO Grant MTM-2014-52402-C3-1-P (Spain).}

\begin{abstract}
We study the obstacle problem for the fractional Laplacian with drift, $\min\left\{\fls u + b \cdot \nabla u,\,u -\varphi\right\} = 0$ in $\R^n$, in the critical regime $s = \frac{1}{2}$.

Our main result establishes the $C^{1,\alpha}$ regularity of the free boundary around any regular point $x_0$, with an expansion of the form
\[
u(x)-\varphi(x) = c_0\big((x-x_0)\cdot e\big)_+^{1+\tilde\gamma(x_0)} + o\left(|x-x_0|^{1+\tilde\gamma(x_0)+\sigma}\right),
\]
\[
\tilde{\gamma}(x_0) = \frac{1}{2}+\frac{1}{\pi} \arctan (b\cdot e),
\]
where $e \in \Sp^{n-1}$ is the normal vector to the free boundary, $\sigma >0$, and $c_0> 0$.

We also establish an analogous result for more general nonlocal operators of order 1. In this case, the exponent $\tilde\gamma(x_0)$ also depends on the operator.
\end{abstract}
\maketitle



\section{Introduction}

We consider the obstacle problem for the fractional Laplacian with drift,
\begin{equation}
\label{eq.pbintro}
\min\big\{ \fls u+ b\cdot \nabla u,\,u -\varphi \big\}  =  0\quad\textrm{in}\quad\R^n,
\end{equation}
where $b\in \R^n$, and $\varphi: \R^n \to \R$ is a smooth obstacle.

Problem \eqref{eq.pbintro} appears when considering optimal stopping problems for Lévy processes with jumps. In particular, this kind of obstacle problems are used to model prices of (perpetual) American options; see for example \cite{CF11, BFR15} and references therein for more details. See also \cite{Sal12} and \cite{KKP16} for further references and motivation on the fractional obstacle problem.

We study the regularity of solutions and the corresponding free boundaries for problem \eqref{eq.pbintro}. Note that the value of $s\in (0,1)$ plays an essential role. Indeed, if $s > \frac{1}{2}$, then the gradient term is of lower order with respect to $\fls$, and thus one expects solutions to behave as in the case $b \equiv 0$. When $s < \frac{1}{2}$ the leading term is $b\cdot\nabla u$ and thus one does not expect regularity results for \eqref{eq.pbintro}. Finally, in the borderline case $s = \frac{1}{2}$ there is an interplay between $b\cdot \nabla u$ and $(-\Delta)^{1/2}$, and one may still expect some regularity, but it becomes a delicate issue.

In this work we study this critical regime, $s = \frac{1}{2}$. 
As explained in detail below, we establish the $C^{1,\alpha}$ regularity of the free boundary near regular points, with a fine description of the solution at such points.

It is important to remark that, when $s=\frac12$, problem \eqref{eq.pbintro} is equivalent to the \emph{thin} obstacle problem in $\R^{n+1}_+$ with an \emph{oblique} derivative condition on $\{x_{n+1}=0\}$.
Thus, our results yield in particular the regularity of the free boundary for such problem, too.


\subsection{Known results}
The regularity of solutions and free boundaries for \eqref{eq.pbintro} was first studied in \cite{Sil07, CSS08} when $b = 0$. In \cite{CSS08}, Caffarelli, Salsa, and Silvestre established the optimal $C^{1,s}$ regularity for the solutions and $C^{1,\alpha}$ regularity of the free boundary around regular points. More precisely, they proved that given any free boundary point $x_0\in \de\{u = \varphi\}$, then
\begin{enumerate}[(i)]
\item  either
\[
0< cr^{1+s} \leq \sup_{B_r(x_0)}(u-\varphi ) \leq Cr^{1+s}
\]
\item or
\[
0\leq \sup_{B_r(x_0)} (u-\varphi) \leq Cr^2.
\]
\end{enumerate}
The set of points satisfying (i) is called the set of \emph{regular points}, and it was proved in \cite{CSS08} that this set is open and $C^{1,\alpha}$.

Later, the singular set --- those points at which the contact set has zero density --- was studied in \cite{GP09} in the case $s = \frac{1}{2}$. More recently, the regular set was proved to be $C^{\infty}$ in \cite{JN16, KRS16}; see also \cite{KPS15, DS16}. The complete structure of the free boundary was described in \cite{BFR15} under the assumption $\Delta \varphi \leq 0$. Finally, the results of \cite{CSS08} have been extended to a wide class of nonlocal elliptic operators in \cite{CRS16}.

All the previous results are for the case $b = 0 $. For the obstacle problem with drift \eqref{eq.pbintro}, Petrosyan and Pop proved in \cite{PP15} the optimal $C^{1,s}$ regularity of solutions in the case $s > \frac{1}{2}$. This result was obtained by means of an Almgren-type monotonicity formula, treating the drift as a lower order term. In \cite{GP16}, the same authors together with Garofalo and Smit Vega García establish  $C^{1,\alpha}$ regularity for the free boundary around regular points, again in the case $ s> \frac{1}{2}$. They do so by means of a Weiss-type monotonicity formula and an epiperimetric inequality. The assumption $s > \frac{1}{2}$ is essential in both works in order to treat the gradient as a lower order term.

\subsection{Main result} We study the obstacle problem with critical drift
\begin{equation}
\label{eq.obstpb_st}
\begin{array}{rcl}
\min\big\{ (-\Delta)^{1/2}u+ b\cdot \nabla u,\,u -\varphi \big\} & = & 0~~\textrm{ in }~~\R^n, \\ \lim_{|x|\to \infty} u (x)& =& 0.
\end{array}
\end{equation}
Here $b$ is a fixed vector in $\R^n$, and the obstacle $\varphi$ is assumed to satisfy
\begin{equation}
\label{eq.obst}
\varphi \textrm{ is bounded},~\varphi \in C^{2,1}(\R^n), \textrm{ and } \{\varphi > 0\}\Subset \R^n.
\end{equation}
The solution to \eqref{eq.obstpb_st} can be constructed as the smallest supersolution above the obstacle and vanishing at infinity.

Our main result reads as follows.
\begin{thm}
\label{thm.1}
Let $u$ be the solution to \eqref{eq.obstpb_st}, with $\varphi$ satisfying \eqref{eq.obst}, and $b\in \R^n$.

Let $x_0\in \de\{u = \varphi\}$ be any free boundary point. Then we have the following dichotomy:

\begin{enumerate}[\upshape(i)]
\item either
\[
0< cr^{1+\tilde\gamma(x_0)} \leq \sup_{B_r(x_0)} (u-\varphi) \leq Cr^{1+\tilde\gamma(x_0)},~~\quad \quad\tilde\gamma(x_0)\in (0,1),
\]
for all $r\in (0,1)$,
\item or
\[
~~~~~~~~~~~~~~~~~~~0\leq \sup_{B_r(x_0)} (u-\varphi) \leq C_\varepsilon r^{2-\varepsilon}\quad\quad\textrm{for all } \varepsilon > 0,~r \in (0,1).
\]
\end{enumerate}
Moreover, the subset of the free boundary satisfying ${\rm (i)}$ is relatively open and is locally $C^{1,\alpha}$ for some $\alpha > 0$.

Furthermore, $\tilde\gamma(x_0)$ is given by
\begin{equation}
\label{eq.tildegammafls}
\tilde\gamma(x_0) = \frac{1}{2}+\frac{1}{\pi} \arctan \big(b\cdot\nu(x_0)\big),
\end{equation}
where $\nu(x_0)$ denotes the unit normal vector to the free boundary at $x_0$ pointing towards $\{u > \varphi\}$. Finally, for every point $x_0$ satisfying ${\rm (i)}$ we have the expansion
\begin{equation}
\label{eq.expansion}
u(x)-\varphi(x) = c_0\Big((x-x_0)\cdot\nu(x_0)\Big)_+^{1+\tilde\gamma(x_0)} + o\left(\left|x-x_0\right|^{1+\tilde\gamma(x_0)+\sigma}\right)
\end{equation}
for some $\sigma > 0$, and $c_0 > 0$. The constants $\sigma$ and $\alpha$ depend only on $n$ and $\|b\|$.
\end{thm}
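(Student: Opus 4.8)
The strategy is to reduce the nonlocal problem \eqref{eq.obstpb_st} to an extension problem in $\R^{n+1}_+$ and then run a blow-up analysis with a Weiss-type monotonicity formula adapted to the drift. Since $s=\frac12$, the Caffarelli--Silvestre extension of $u$ is a function $\tilde u(x,y)$ on $\R^{n+1}_+$ that is harmonic in the open half-space, and the equation $(-\Delta)^{1/2}u + b\cdot\nabla u = 0$ in the non-contact set becomes the \emph{oblique} boundary condition $\partial_y\tilde u = b\cdot\nabla_x\tilde u$ on $\{y=0\}\cap\{u>\varphi\}$, together with $\tilde u\ge\varphi$, $\partial_y\tilde u\le b\cdot\nabla_x\tilde u$ on the whole thin space; subtracting a suitable extension of $\varphi$ we may assume $\varphi\equiv 0$ near the free boundary point, up to a controlled error. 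First I would establish the basic regularity: semiconvexity/optimal regularity of $u$ up to $C^{1,\beta}$ and nondegeneracy, and then classify the possible blow-up rates at a free boundary point $x_0$. The key new feature is that, because the drift is of the same order as $(-\Delta)^{1/2}$, the homogeneity of a blow-up is \emph{not} universal: a one-dimensional analysis (solving the ODE/complex-analytic problem for the half-space with oblique condition along a direction $e$) shows that the only admissible homogeneities in $(0,2)$ for a ``regular'' blow-up are $1+\tilde\gamma$ with $\tilde\gamma=\tilde\gamma(b\cdot e)$ exactly as in \eqref{eq.tildegammafls}; this is where the $\frac1\pi\arctan$ comes from — it is the phase of the holomorphic function $z\mapsto z^{1+\tilde\gamma}$ matching the oblique condition on the boundary ray.

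Next I would set up the Weiss-type monotonicity formula. For a candidate homogeneity $\kappa=1+\tilde\gamma(b\cdot e)$ one defines
\[
W_\kappa(r,\tilde u) = \frac{1}{r^{n-1+2\kappa}}\int_{B_r^+}|\nabla\tilde u|^2 - \frac{\kappa}{r^{n+2\kappa}}\int_{\partial B_r^+\cap\{y>0\}}\tilde u^2 + (\text{drift correction terms}),
\]
where the drift-correction (a boundary integral weighted by $b\cdot e$) is chosen so that $\frac{d}{dr}W_\kappa(r,\tilde u)\ge \frac{c}{r}\int_{\partial B_r^+}\bigl(\partial_r\tilde u - \tfrac{\kappa}{r}\tilde u\bigr)^2 - (\text{error from } \varphi\neq 0)$. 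Monotonicity of $W_\kappa$, together with the classification of global $\kappa$-homogeneous solutions of the thin obstacle problem with oblique condition, gives that at a regular point (i.e.\ where the lowest blow-up homogeneity is $<2$, so necessarily $=1+\tilde\gamma(x_0)$) every blow-up is the half-space solution $c_0\,(x\cdot e)_+^{1+\tilde\gamma}$ extended harmonically, and the blow-up is unique. The dichotomy in Theorem~\ref{thm.1} is then: either the lowest homogeneity is $<2$ (regular case, part (i)) or it is $\ge 2$, and in the latter case an iteration of the almost-monotonicity at scale $r$ yields the bound $Cr^{2-\varepsilon}$ (part (ii)), the $\varepsilon$-loss reflecting the fact that $2$ is a borderline homogeneity for the oblique problem.

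To upgrade from uniqueness of blow-ups to $C^{1,\alpha}$ regularity of the free boundary, I would prove an \emph{epiperimetric inequality}: there is $\varepsilon_0>0$ such that for any boundary-$\kappa$-homogeneous competitor $c$ close to the family $\{c_0(x\cdot e)_+^{\kappa}\}$ there is an extension $v$ with $W_\kappa(1,v)\le (1-\varepsilon_0)\,W_\kappa(1,c)$. Feeding this into the monotonicity formula gives a decay rate $W_\kappa(r)-W_\kappa(0)\le C r^{\sigma'}$, hence a Hölder rate of convergence of rescalings to the blow-up with a \emph{continuous and locally controlled} modulus, which forces the normal direction $\nu(x_0)$ to vary Hölder-continuously along the regular set — this is the statement that the regular set is relatively open and locally $C^{1,\alpha}$. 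The same rate, transferred back through the extension and after adding back the obstacle's Taylor polynomial, yields the pointwise expansion \eqref{eq.expansion} with $\sigma>0$.

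\textbf{Main obstacle.} The genuinely delicate point is the dependence of the homogeneity $\tilde\gamma$ on the \emph{free} direction $e=\nu(x_0)$: both the Weiss energy $W_\kappa$ and the epiperimetric inequality must be set up with the ``right'' $\kappa$ at each point, and along the free boundary this $\kappa$ changes. One therefore needs the classification, monotonicity, and epiperimetric inequality to be \emph{uniform} in $b\cdot e$ over compact subsets of $(-\infty,\infty)$ (equivalently, $\tilde\gamma$ in compact subsets of $(0,1)$), and one needs a continuity/compactness argument that controls how $W_\kappa$ at a point sees the homogeneity dictated by a nearby direction. Handling the oblique (rather than Neumann) boundary condition — which breaks the even reflection across $\{y=0\}$ used in the classical $s=\frac12$ theory — is the technical heart: the extension $\tilde u$ must be studied directly on the half-space, and the standard odd/even decomposition is replaced by the complex-analytic description in the $n=1$ slice plus a dimension-reduction argument for the general $n$.
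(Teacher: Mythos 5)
There is a genuine gap at the heart of your proposal: the entire argument rests on a Weiss-type monotonicity formula with unspecified ``drift correction terms'' that you assert can be ``chosen so that'' $\frac{d}{dr}W_\kappa\ge 0$, and on an epiperimetric inequality for the same functional. You never construct either, and there is a structural reason to doubt they exist in the critical regime $s=\frac12$: the operator $(-\Delta)^{1/2}+b\cdot\nabla$ is not self-adjoint and the drift is of the \emph{same} order as the diffusion, so the problem is not the Euler--Lagrange equation of any natural energy, and the Dirichlet integral $\int_{B_r^+}|\nabla\tilde u|^2$ of the extension is not the right quantity. In the works you are implicitly following (\cite{PP15,GP16}) the Almgren/Weiss formulas survive only because for $s>\frac12$ the drift is a \emph{lower-order perturbation}; here it is not, and the very fact you correctly identify --- that the blow-up homogeneity $1+\tilde\gamma$ varies with the direction $e=\nu(x_0)$ --- is incompatible with the usual scaling structure underlying a Weiss formula with a single fixed $\kappa$. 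The paper states explicitly that no monotonicity formula is available for this problem and that its proofs are for that reason completely different from \cite{PP15,GP16}. Your classification of the dichotomy by ``lowest blow-up homogeneity'' also presupposes Almgren-type frequency control, which is part of the same missing toolkit.

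What the paper does instead, and what you would need to replace your Steps 2--3 with, is entirely non-variational: (a) blow-ups at regular points (defined by $\limsup_r\|u-\varphi\|_{L^\infty(B_r)}/r^{2-\varepsilon}=\infty$) are shown to be \emph{convex} global solutions, using the semiconvexity of $u$ together with the rescaling; these are classified by a boundary Harnack principle for $(-L+b\cdot\nabla)$ in cones (reducing to the one-dimensional solution $(x\cdot e)_+^{1+\tilde\gamma(b\cdot e)}$, which is the part of your outline that is correct); (b) Lipschitz and then $C^{1,\alpha}$ regularity of the free boundary follow from directional monotonicity ($\partial_{e'}u\ge 0$ in a cone of directions) plus the boundary Harnack inequality with right-hand side applied to quotients of derivatives --- not from an epiperimetric decay of $W_\kappa$; and (c) the expansion \eqref{eq.expansion} comes from a boundary Schauder-type estimate in $C^{1,\alpha}$ domains, proved by explicit barriers $\varrho^\kappa$ and a compactness/Liouville argument in the half-space. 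Unless you can actually exhibit the corrected Weiss energy and verify its monotonicity for the oblique problem (which would be a substantial new result in itself), the proposal as written does not constitute a proof.
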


We think it is quite interesting that the growth around free boundary points (and thus, the regularity of the solution) depends on the orientation of the normal vector with respect to the free boundary. To our knowledge, this is the first example of an obstacle-type problem in which this happens.

The previous theorem implies that the solution is $C^{1,\gamma_b}$ at every free boundary point $x_0$, with
\begin{equation}
\label{eq.gammab}
\gamma_b := \frac{1}{2}- \frac{1}{\pi}\arctan(\|b\|).
\end{equation}
Nonetheless, the constants may depend on the point $x_0$ considered, so that if we want a uniform regularity estimate for $u$ we actually have the following corollary. It establishes almost optimal regularity of solutions.

\begin{cor}
\label{cor.1}
Let $u$ be the solution to \eqref{eq.obstpb_st} for a given obstacle $\varphi$ of the form \eqref{eq.obst}, and a given $b\in \R^n$. Let $\gamma_b$ given by \eqref{eq.gammab}. Then, for any $\varepsilon > 0$ we have
\[
\|u\|_{C^{1,\gamma_b - \varepsilon}(\R^n)}\leq C_\varepsilon,
\]
where $C_\varepsilon$ is a constant depending only on $n$, $\|b\|$, $\varepsilon$, and $\|\varphi\|_{C^{2,1}(\R^n)}$.
\end{cor}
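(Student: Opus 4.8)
\emph{Proof proposal.} The plan is to deduce the corollary from Theorem~\ref{thm.1}. Away from the free boundary $\Gamma:=\de\{u=\varphi\}$ the regularity of $u$ is clear: in the interior of $\{u=\varphi\}$ one has $u=\varphi\in C^{2,1}(\R^n)$, and in $\{u>\varphi\}$ the function $u$ solves the homogeneous equation $(-\Delta)^{1/2}u+b\cdot\nabla u=0$, so interior estimates apply there; near $\Gamma$ one uses the pointwise description in Theorem~\ref{thm.1}. The only genuine issue is that the estimates provided by Theorem~\ref{thm.1} may have constants depending on the free boundary point, so they must be upgraded to a uniform estimate by a compactness argument, and this upgrade is what costs the $\varepsilon$. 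By the Campanato characterization of $C^{1,\gamma_b-\varepsilon}$, it suffices to produce, for every $x\in\R^n$ and every $r\in(0,1)$, an affine function $\ell$ with
\[
\|u-\ell\|_{L^\infty(B_r(x))}\le C_\varepsilon\, r^{1+\gamma_b-\varepsilon},
\]
where $C_\varepsilon$ depends only on $n$, $\|b\|$, $\|\varphi\|_{C^{2,1}(\R^n)}$ and $\varepsilon$.

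\emph{Step 1 (free boundary points).} I would first establish this estimate, with a uniform constant, at every point $x_0\in\Gamma$, taking $\ell$ to be the first-order Taylor polynomial of $\varphi$ at $x_0$. For a fixed $x_0$ it is essentially a restatement of Theorem~\ref{thm.1}, since $u-\ell=(u-\varphi)+(\varphi-\ell)$ with $\varphi-\ell=O(|y-x_0|^2)$ by $\varphi\in C^{2,1}$, while $u-\varphi$ is bounded on $B_r(x_0)$ by $C r^{1+\tilde\gamma(x_0)}$ at a regular point (using $\tilde\gamma(x_0)=\tfrac{1}{2}+\tfrac{1}{\pi}\arctan(b\cdot\nu(x_0))\ge\gamma_b$ and \eqref{eq.expansion}) and by $C_\varepsilon r^{2-\varepsilon}$ at a singular point. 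To make the constant independent of $x_0$, I would argue by contradiction and compactness: along a hypothetical sequence of free boundary points where the inequality fails, the usual blow-up rescalings converge to a nonzero global solution of the obstacle problem for $(-\Delta)^{1/2}+b'\cdot\nabla$ (with $b'$ a rotation of $b$; note that at $s=\tfrac12$ the operator is scale invariant), with the origin on its free boundary and with growth at infinity not exceeding $|x|^{1+\gamma_b}$; such global solutions are classified along the proof of Theorem~\ref{thm.1} --- the half-space profiles $c\big((x\cdot e)_+\big)^{1+\tilde\gamma(e)}$ with $\tilde\gamma(e)\ge\gamma_b$, together with the degenerate profiles at singular points --- and none of them can produce the failure as long as one works with the exponent $1+\gamma_b-\varepsilon$, strictly below the extremal degree $1+\gamma_b$. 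This strict gap is precisely where the loss of $\varepsilon$ enters.

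\emph{Step 2 (interior estimate and patching).} Now fix $x\in\R^n$ and $r\in(0,1)$, and set $d:=\operatorname{dist}(x,\Gamma)$. If $r\ge d$, pick $x_0\in\Gamma$ with $|x-x_0|=d$; then $B_r(x)\subset B_{2r}(x_0)$ and Step~1 at $x_0$ gives the required affine approximation. If $r<d$, then $B_r(x)\subset B_d(x)$ lies either in $\{u=\varphi\}$, where the first-order Taylor polynomial of $\varphi$ at $x$ works since $\varphi\in C^{2,1}$, or in $\{u>\varphi\}$, where $u$ solves $(-\Delta)^{1/2}u+b\cdot\nabla u=0$ in $B_d(x)$ and, by Step~1 at the closest point $x_0$, differs from some affine function by at most $C_\varepsilon d^{1+\gamma_b-\varepsilon}$ on $B_d(x)\subset B_{2d}(x_0)$. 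Feeding both facts into the interior regularity theory for $(-\Delta)^{1/2}+b\cdot\nabla$ on $B_d(x)$ --- applied with care, subtracting the affine function and keeping track of the slowly-decaying nonlocal tails produced by the order-1 operator --- yields $\|u-\ell\|_{L^\infty(B_r(x))}\le C_\varepsilon r^{1+\gamma_b-\varepsilon}$ for a suitable affine $\ell$ and all $r\le d$. Collecting the cases gives the Campanato estimate for all $x$ and $r\in(0,1)$, hence $\|u\|_{C^{1,\gamma_b-\varepsilon}(\R^n)}\le C_\varepsilon$.

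The step I expect to be the main obstacle is Step~1, that is, removing the dependence of the constant on the free boundary point. This requires the classification of blow-up profiles at \emph{all} free boundary points, regular and singular, obtained along the proof of Theorem~\ref{thm.1}, together with a compactness argument that is delicate here because the drift $b\cdot\nabla$ is of the same order as $(-\Delta)^{1/2}$, so that the limiting operator depends on the rescaled drift, and because the competitors grow like $|x|^{1+\gamma_b}$, only barely above the order $1$ of the operator, which forces the nonlocal tails to be controlled throughout. Once Step~1 is available, Step~2 is a routine covering argument, modulo the (standard, but for order-1 nonlocal operators technically delicate) interior estimate invoked there.
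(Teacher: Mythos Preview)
Your approach is essentially the same as the paper's. The paper packages the argument as a separate Proposition~\ref{prop.almostoptimal}, which establishes the uniform growth bound $\|\nabla u\|_{L^\infty(B_r(x_0))}\le C r^{\gamma_b-\varepsilon}$ at every free boundary point by exactly the contradiction/compactness argument you describe in Step~1: a hypothetical blow-up sequence converges (using the a priori $C^{1,\tau}$ estimate of Proposition~\ref{prop.reg.u} for compactness and the semiconvexity $D^2u\ge -1$ for convexity in the limit) to a convex global solution with sublinear gradient growth strictly below the minimal admissible exponent, which forces it to vanish by the classification Theorem~\ref{thm.clas} and contradicts nondegeneracy. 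The paper then does your Step~2 patching with interior estimates at the level of $\nabla u$ rather than via Campanato on $u$, but this is cosmetic. The dependence on $\|b\|$ only (rather than on $b$) is obtained, as you anticipate, by allowing the compactness argument to run over sequences $b_k$ with $\|b_k\|=\|b\|$.

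One small remark on framing: you say you will ``deduce the corollary from Theorem~\ref{thm.1}'', but in fact you do not use the dichotomy or the expansion \eqref{eq.expansion} as black boxes---the constants there do depend on $x_0$. What you actually use (and what the paper uses) is the classification of convex global solutions, Theorem~\ref{thm.clas}, which is an \emph{ingredient} in the proof of Theorem~\ref{thm.1}. This is a harmless imprecision, but worth being aware of: the corollary sits logically alongside Theorem~\ref{thm.1}, not downstream of it.
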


In order to prove Theorem~\ref{thm.1} we proceed as follows. First, we classify convex global solutions to the obstacle problem by following the ideas in \cite{CRS16}. Then, we show the Lipschitz regularity of the free boundary at regular points, and using the results in \cite{RS16b} we find that the free boundary is actually $C^{1,\alpha}$. Finally, to prove \eqref{eq.expansion}-\eqref{eq.tildegammafls} we need to establish fine regularity estimates up to the boundary in $C^{1,\alpha}$ domains. This is done by constructing appropriate barriers and a blow-up argument in the spirit of \cite{RS16}. Notice that, since we do not have any monotonicity formula for problem \eqref{eq.obstpb_st}, our proofs are completely different from those in \cite{PP15, GP16}.

\subsection{More general nonlocal operators of order 1 with drift} We will show an analogous result for more general nonlocal operators of the form
\begin{equation}
\label{eq.L}
L u(x) = \int_{\R^n} \left(\frac{u(x+y)+u(x-y)}{2}-u(x) \right)\frac{\mu(y/|y|)}{|y|^{n+1}} dy,
\end{equation}
with
\begin{equation}
\label{eq.L.cond}
\mu \in L^\infty(\Sp^{n-1}) ~~ \textrm{ satisfying }~~ \mu (\theta)  =\mu (-\theta) ~~\textrm{and}~~ 0<\lambda \leq \mu \leq \Lambda.
\end{equation}
The constants $\lambda$ and $\Lambda$ are the ellipticity constants. Notice that the operators $L$ we are considering are of order $1$.

The obstacle problem in this case is, then,
\begin{equation}
\label{eq.obstpb}
\begin{array}{rcl}
\min\big\{-Lu+ b\cdot \nabla u, u -\varphi \big\}& = & 0~~\textrm{ in }~~\R^n, \\ \lim_{|x|\to \infty} u (x)& =& 0.
\end{array}
\end{equation}

Our main result reads as follows.

\begin{thm}
\label{thm.2}
Let $L$ be an operator of the form \eqref{eq.L}-\eqref{eq.L.cond}. Let $u$ be the solution to \eqref{eq.obstpb}, with $\varphi$ satisfying \eqref{eq.obst}, and $b\in \R^n$.

Let $x_0$ be any free boundary point, $x_0\in \de\{u = \varphi\}$. Then we have the following dichotomy:

\begin{enumerate}[\upshape(i)]
\item either
\[
0< cr^{1+\tilde\gamma(x_0)} \leq \sup_{B_r(x_0)} (u-\varphi) \leq Cr^{1+\tilde\gamma(x_0)},~~\quad \quad\tilde\gamma(x_0)\in (0,1),
\]
for all $r\in(0,1)$.
\item or
\[
~~~~~~~~~~~~~~~~~~~0\leq \sup_{B_r(x_0)} (u-\varphi) \leq C_\varepsilon r^{2-\varepsilon}\quad\quad\textrm{for all }\varepsilon > 0,~r\in(0,1).
\]
\end{enumerate}
Moreover, the subset of the free boundary satisfying ${\rm (i)}$ is relatively open and is locally $C^{1,\alpha}$ for some $\alpha > 0$.

Furthermore, the value of $\tilde\gamma(x_0)$ is given by
\begin{equation}
\label{eq.tildegamma}
\tilde\gamma(x_0) = \frac{1}{2}+\frac{1}{\pi} \arctan \left(\frac{b\cdot\nu(x_0)}{\chi(\nu(x_0))}\right),
\end{equation}
where $\nu(x_0)$ denotes the unit normal vector to the free boundary at $x_0$ pointing towards $\{u > \varphi\}$, and
\begin{equation}
\label{eq.chi}
\chi(e) = \frac{\pi}{2}\int_{\Sp^{n-1}} |\theta\cdot e| \mu(\theta)d\theta~~\quad\textrm{for}\quad e\in \Sp^{n-1}.
\end{equation}
Finally, for any point $x_0$ satisfying ${\rm (i)}$ we have the expansion
\[
u(x)-\varphi(x) = c_0\Big((x-x_0)\cdot\nu(x_0)\Big)_+^{1+\tilde\gamma(x_0)} + o\left(\left|x-x_0\right|^{1+\tilde\gamma(x_0)+\sigma}\right)
\]
for some $\sigma > 0$, and $c_0 > 0$. The constants $\sigma$ and $\alpha$ depend only on $n$, the ellipticity constants, and $\|b\|$.
\end{thm}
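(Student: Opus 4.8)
\section*{Proof strategy}

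The plan is to avoid monotonicity formulas altogether, proceeding in three stages: (a) classify convex global solutions; (b) deduce the growth dichotomy and show that the regular set is locally a Lipschitz, then $C^{1,\alpha}$, graph; (c) upgrade this to the sharp expansion through boundary-regularity estimates in $C^{1,\alpha}$ domains. First I record the preliminary facts. Problem \eqref{eq.obstpb} has a unique solution $u$, the least supersolution above $\varphi$ vanishing at infinity; $u\in C^{1,\tau_0}(\R^n)$ for some $\tau_0(n,\lambda,\Lambda,\|b\|)>0$ by the regularity theory for nonlocal equations of order $1$ with bounded drift (the drift has the same order as $L$, so it is incorporated into the operator); $u$ is semiconvex, $\de_{ee}u\ge -C\|\varphi\|_{C^{1,1}}$ for every $e\in\Sp^{n-1}$, since $\varphi\in C^{2,1}$; and a nondegeneracy estimate holds at every free boundary point. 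In particular $\nabla u=\nabla\varphi$ on $\de\{u=\varphi\}$, so $v:=u-\varphi\ge 0$ vanishes to order strictly larger than $1$ there.

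The computation that produces the exponent is the reduction of $L$ to one dimension. If $v(x)=w(x\cdot e)$, then passing to polar coordinates in \eqref{eq.L} and rescaling the radial variable by $|\theta\cdot e|$ gives
\[
Lv(x)=-\chi(e)\,(-\Delta)^{1/2}_{\R}w(x\cdot e),
\]
with $\chi$ as in \eqref{eq.chi} and $(-\Delta)^{1/2}_{\R}$ the one-dimensional half-Laplacian (normalized by its symbol $|\xi|$); hence on such profiles $-L+b\cdot\nabla$ acts as $\chi(e)\big((-\Delta)^{1/2}_{\R}+\tilde b\,\de_t\big)$ with effective drift $\tilde b=(b\cdot e)/\chi(e)$. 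I then classify the homogeneous solutions of $(-\Delta)^{1/2}_{\R}w+\tilde b\,w'=0$ in $\{t>0\}$, $w=0$ in $\{t\le0\}$, via the harmonic extension to the half-plane and separation of variables $U=r^\beta g(\theta)$: the trace of $t_+^\beta$ solves this problem precisely when $\cos(\beta\pi)+\tilde b\sin(\beta\pi)=0$, whose two roots in $(0,2)$ are $\beta_0=\tfrac12+\tfrac1\pi\arctan\tilde b$ and $\beta_1=1+\beta_0$. The exponent $\beta_0\in(0,1)$ yields a profile that is neither $C^1$ nor compatible with the obstacle inequality on $\{t\le0\}$, whereas $w(t)=c_0 t_+^{\beta_1}$ is convex, $C^1$ and satisfies the obstacle inequality (one checks $g'(\pi)\ge0$). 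This singles out the one-dimensional model and gives $1+\tilde\gamma(e)=\beta_1$, that is, formula \eqref{eq.tildegamma}.

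Following the scheme of \cite{CRS16}, I next classify convex global solutions $u_0\ge0$ of \eqref{eq.obstpb} (with the $\varphi$-terms dropped) of subquadratic growth. The set $\{u_0=0\}$ is convex. If it has nonempty interior, a dimension-reduction argument --- blowing up at a free boundary point of $u_0$ --- produces a convex global solution whose contact set is a half-space $\{x\cdot e\le0\}$; differentiating tangentially gives a sublinear solution of the linearized equation that vanishes on a half-space, hence identically zero by a Liouville theorem, so $u_0=w(x\cdot e)$ and the previous step forces $u_0=c_0(x\cdot e)_+^{1+\tilde\gamma(e)}$. If instead $\{u_0=0\}$ has empty interior, then $u_0>0$ a.e., and evaluating the obstacle inequality at a contact point $p$ --- where $u_0(p)=0$ and $\nabla u_0(p)=0$, $u_0$ being $C^1$ --- together with $u_0\ge0$ forces $Lu_0(p)=0$, hence $u_0\equiv0$. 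Thus the only convex global solutions of subquadratic growth are $0$ and the one-dimensional models; this replaces the classification obtained elsewhere by monotonicity methods.

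Finally, at a free boundary point $x_0$ one blows up $v$ at a suitable rate, and by the above the rescalings converge, along subsequences, to a convex global solution $u_0$. If some blow-up is nonzero, then $u_0=c_0(x\cdot e)_+^{1+\tilde\gamma(e)}$: comparing $v$ with this model and iterating the comparison across dyadic scales gives the two-sided bound (i) with $\tilde\gamma(x_0)=\tilde\gamma(e)$, $\nu(x_0)=e$, shows the regular set is relatively open, and --- since $u_0$ is strictly increasing in a cone of directions about $e$ --- yields $\de_\tau v\ge0$ near $x_0$ for $\tau$ in a cone, so the free boundary is Lipschitz near $x_0$; its $C^{1,\alpha}$ regularity then follows from the boundary-regularity results of \cite{RS16b} for the linearized equation in Lipschitz domains. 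Otherwise every blow-up is zero, and an improvement-of-flatness iteration together with a barrier (using that order-$1$ operators do not admit $|x|^2$-growth) upgrades the decay to (ii). With the free boundary now $C^{1,\alpha}$, the expansion in the statement is obtained by a barrier-and-blow-up argument in the spirit of \cite{RS16}: one builds sub- and supersolutions from the one-dimensional models $(x\cdot\nu)_+^{1+\tilde\gamma(x_0)}$ glued along the free boundary, proves that $v/d^{\,1+\tilde\gamma(x_0)}$ is $C^\sigma$ up to the free boundary (with $d$ the distance to it), and reads off $c_0$ and the error. I expect the main obstacle to be the combination of the absence of a monotonicity formula with the fact that the growth exponent varies along the free boundary: the classification of convex global solutions must be carried out by hand, and, more delicately, the barriers and the boundary estimates in $C^{1,\alpha}$ domains must be uniform in the normal direction and must correctly track the (only Hölder-regular) map $e\mapsto\tilde\gamma(e)$, so that the expansion holds with the point-dependent exponent $1+\tilde\gamma(x_0)$; adapting the boundary-regularity machinery of \cite{RS16} to this variable-exponent setting is the technical core of the proof.
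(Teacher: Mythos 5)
Your overall strategy coincides with the paper's: classification of convex global solutions in the spirit of \cite{CRS16} (with the one-dimensional exponent computed via the harmonic extension and the reduction of $L$ on one-dimensional profiles to $\chi(e)(-\Delta)^{1/2}+ (b\cdot e)\,\partial_t$, as in \cite{RS14}), blow-up at regular points, directional monotonicity giving a Lipschitz free boundary, the boundary Harnack of \cite{RS16b} for $C^{1,\alpha}$, and a barrier-plus-compactness expansion in $C^{1,\alpha}$ domains in the spirit of \cite{RS16}. Two local remarks. In the empty-interior case of the classification you evaluate $Lu_0$ at a contact point; for an order-one operator and a function with growth $|x|^{2-\varepsilon}$ the tail $\int_{|y|>1}|u_0(y)|\,|y|^{-n-1}dy$ diverges, so $Lu_0(p)$ is not defined pointwise --- the paper instead extends the equation for the increments $u_0-u_0(\cdot-h)$ across the hyperplane and uses interior estimates together with the growth control to conclude that $u_0$ is affine, hence zero. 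Also, case (ii) needs no improvement-of-flatness iteration: with the definition of regular point as $\limsup_{r\downarrow 0}\|u-\varphi\|_{L^\infty(B_r(x_0))}r^{-2+\varepsilon}=\infty$ for some $\varepsilon>0$, alternative (ii) is simply the negation of (i).

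The genuine gap is the lower bound in (i), equivalently the positivity of $c_0$ in the expansion. Your ``preliminary fact'' that a nondegeneracy estimate holds at every free boundary point is not available (it cannot be, since points of type (ii) decay faster, and even the paper's Section 9 nondegeneracy requires concavity of the obstacle and only yields order $r^2$). Moreover, ``iterating the comparison across dyadic scales'' does not produce $\sup_{B_r}(u-\varphi)\ge c\,r^{1+\tilde\gamma(x_0)}$ for \emph{all} $r\in(0,1)$: the blow-up is nontrivial only along a subsequence of scales and is normalized by $\|\nabla u\|_{L^\infty(B_{r_k})}$, which a priori need not be comparable to $r_k^{\tilde\gamma(x_0)}$ at intermediate scales, and in the absence of any Weiss or Almgren monotonicity formula there is nothing to propagate nondegeneracy from one scale to the next. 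The paper closes this by proving the expansion $\partial_\nu u=Q(z)\big((x-z)\cdot\nu(z)\big)_+^{\tilde\gamma(z)}+O(|x-z|^{\tilde\gamma(z)+\sigma})$ at every nearby free boundary point $z$, showing that $z\mapsto Q(z)$ is continuous, and proving $Q(x_0)>0$ by trapping an explicit subsolution $\varrho^{\kappa}$ --- built from the regularized distance to a strictly convex $C^{1,\alpha}$ set touching the free boundary from inside $\{u>\varphi\}$, with $\kappa$ slightly larger than $\tilde\gamma(x_0)$ --- below $\partial_\nu u$; the lower bound then follows by integration along the normal. Some version of this subsolution argument must appear in your stage (c); as written, your proposal does not contain it.
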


This result extends Theorem~\ref{thm.1}, and the dependence on the operator $L$ is reflected in \eqref{eq.tildegamma}. For the fractional Laplacian we have $\chi \equiv 1$, and thus \eqref{eq.tildegamma} becomes \eqref{eq.tildegammafls}.

We will also prove an analogous result to Corollary~\ref{cor.1} regarding the almost optimal regularity of solutions; see Corollary~\ref{cor.2}.

\subsection{Structure of the work}

We will focus on the proof of Theorem~\ref{thm.2}, from which in particular will follow Theorem~\ref{thm.1}. The paper is organised as follows.

In Section \ref{sec.2} we introduce the notation and give some preliminary results regarding nonlocal elliptic problems with drift. In Section~\ref{sec.3} we establish $C^{1,\tau}$ estimates for solutions to the obstacle problem with critical drift. In Section~\ref{sec.4} we classify convex global solutions to the problem. In Section~\ref{sec.5} we introduce the notion of regular points and we prove that blow-ups of solutions around such points converge to convex global solutions. In Section~\ref{sec.6} we prove $C^{1,\alpha}$ regularity of the free boundary around regular points. In Section~\ref{sec.7} we establish estimates up to the boundary for the Dirichlet problem with drift in $C^{1,\alpha}$ domains, in particular, finding an expansion of solutions around points of the boundary. In Section~\ref{sec.8} we combine the results from Sections~\ref{sec.6} and \ref{sec.7} to prove Theorems~\ref{thm.1} and \ref{thm.2}. Finally, in Section~\ref{sec.9}, we establish a non-degeneracy property at all points of the free boundary when the obstacle is concave near the coincidence set.

\section{Notation and preliminaries}
\label{sec.2}
We begin our work with a section of notation and preliminaries. Here, we recall some known results regarding nonlocal operators with drift, and we also find a 1-dimensional solution.

Throughout the work we will use the following function in order to avoid a heavy reading, $\gamma : \R \to (0,1)$, given by
\begin{equation}
\label{eq.gamma}
\gamma(t) := \frac{1}{2} + \frac{1}{\pi} \arctan \left(t\right).
\end{equation}

We next introduce some known results regarding the elliptic problem with drift that will be used. The first one is the following interior estimate.

\begin{prop}
\label{prop.intest}
Let $L$ be an operator of the form \eqref{eq.L}-\eqref{eq.L.cond}, and let $b\in \R^n$. Let $u$ solve
\[
(-L+b\cdot \nabla) u = f,\quad \textrm{in} \quad B_1,
\]
for some $f$. Then, if $f\in L^\infty(B_1)$, and for any $\varepsilon > 0$,
\[
[u]_{C^{1-\varepsilon}(B_{1/2})} \leq C \left(\|f\|_{L^\infty(B_1)} + \|u\|_{L^\infty(B_1)} + \int_{\R^n}\frac{|u(y)|}{1+|y|^{n+1}} dy\right),
\]
where $C$ depends only on $n$, $\varepsilon$, the ellipticity constants, and $\|b\|$.

\end{prop}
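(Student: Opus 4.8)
The decisive feature of the statement is that $L$ has order $1$, so the term $b\cdot\nabla u$ is of the \emph{same} order as $Lu$ and cannot be treated as a lower‑order perturbation of the case $b=0$. The plan is therefore to work with the combined operator
\[
\Lo := -L + b\cdot\nabla
\]
throughout. This is a linear nonlocal operator of order $1$ with a bounded (indeed constant) drift, for which a scale invariant weak Harnack inequality and an interior Hölder estimate
\[
[u]_{C^\alpha(B_{1/2})} \leq C\Big(\|f\|_{L^\infty(B_1)} + \|u\|_{L^\infty(B_1)} + \int_{\R^n}\tfrac{|u(y)|}{1+|y|^{n+1}}\,dy\Big)
\]
hold for some small $\alpha = \alpha(n,\lambda,\Lambda,\|b\|)>0$, by the nonlocal De Giorgi--Nash--Moser / Krylov--Safonov theory applied to operators of order $1$ with bounded drift (for the fractional Laplacian this is the elliptic counterpart of Silvestre's Hölder estimates for critical advection--diffusion). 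By a routine approximation this may be taken as an a priori bound for smooth $u$, and a covering argument reduces matters to working near a single interior point, say the origin. This is Step~1.

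Step~2 upgrades $C^\alpha$ to $C^{1-\varepsilon}$ and rests on two facts. First, since the kernel of $L$ does not depend on $x$ and $b$ is constant, $\Lo$ commutes with translations, so difference quotients of solutions of the \emph{homogeneous} equation $\Lo w = 0$ again solve $\Lo(\,\cdot\,)=0$; iterating the estimate of Step~1 on difference quotients of orders $\alpha, 2\alpha, 3\alpha, \dots$ shows that bounded solutions of $\Lo w = 0$ in $B_1$ with controlled tail are $C^{1,\alpha'}$ (in fact $C^\infty$) in $B_{1/2}$, with interior estimates. Second, applying the same interior bound on balls of radius $R\to\infty$ to a bounded entire solution of $\Lo w = 0$ in $\R^n$ forces $w$ to be constant (a Liouville property for $\Lo$). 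These two statements — interior near‑$C^1$ regularity and rigidity for the \emph{homogeneous} equation — are the substitute for the classical elliptic regularity that is unavailable here because the drift is of critical order.

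Step~3 is the improvement‑of‑oscillation iteration, exploiting the exact degree‑$1$ homogeneity of $\Lo$: if $\Lo u = f$ then $\Lo\big(u(r\,\cdot\,)\big)(x) = r\,f(rx)$. Fix $\varepsilon\in(0,1)$, set $\beta := 1-\varepsilon$, and normalise the right‑hand side of the asserted inequality to be $\le 1$. By a compactness argument based on Step~2 there are $\rho = \rho(n,\lambda,\Lambda,\|b\|,\varepsilon)\in(0,\tfrac12)$ and $\delta>0$ so that, whenever $\Lo v = g$ in $B_1$ with $\|g\|_{L^\infty(B_1)}\le\delta$, $\|v\|_{L^\infty(B_1)}\le 1$ and bounded tail, there is a constant $c$ with $\|v-c\|_{L^\infty(B_\rho)}\le\rho^{\beta}$ (one uses that the blow‑up limit solves the homogeneous equation, hence is $C^{1-\varepsilon/2}$ at the origin, and that $\rho$ is chosen small relative to the tail constant). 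Applying this to $v(x):=\rho^{-k\beta}\big(u(\rho^k x)-c_k\big)$, for which $\Lo v = \rho^{k\varepsilon}f(\rho^k\,\cdot\,)$ has $L^\infty(B_1)$ norm $\le \rho^{k\varepsilon}$ — the source \emph{gains} a power precisely because $\beta<1$, which is exactly why one loses $\varepsilon$ and cannot reach $C^1$ — yields constants $c_k$ with $|c_{k+1}-c_k|\le C\rho^{k\beta}$ and $\|u-c_k\|_{L^\infty(B_{\rho^k})}\le\rho^{k\beta}$ for all $k$, provided the tail bound is propagated along the iteration (which follows from the bounds at coarser scales together with $\beta<1$) and the finitely many initial scales with $\rho^{k\varepsilon}>\delta$ are absorbed by shrinking the reference ball. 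Summing gives $[u]_{C^{1-\varepsilon}(B_{1/4})}\le C$, and the bound on $B_{1/2}$ follows by covering and rescaling. The main obstacle throughout is the criticality of the drift: it cannot be split off, so the Harnack inequality, the Liouville / interior regularity for $\Lo w=0$, and the tail bookkeeping in the iteration must all be carried out for the combined operator $\Lo$ rather than for $-L$ plus a perturbation.
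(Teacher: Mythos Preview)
Your proposal is correct and follows essentially the same strategy the paper invokes: the paper does not give a self-contained proof but observes that the argument in \cite{Ser15} (for $b=0$) goes through verbatim once the underlying H\"older estimate of \cite{CL14} is replaced by the H\"older estimate for order-$1$ operators with critical drift from \cite{SS16} or \cite{CD16}, and your Steps~1--3 are precisely a fleshed-out version of that scheme (scale-invariant $C^\alpha$ estimate for the combined operator $\Lo=-L+b\cdot\nabla$, then translation invariance plus compactness/Liouville to bootstrap to $C^{1-\varepsilon}$).
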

The proof of Proposition~\eqref{prop.intest} is given in \cite{Ser15} in case $b = 0$ (in the much more general context of fully nonlinear equations). The proof of \cite{Ser15} uses the main result in \cite{CL14}. The proof of Proposition~\ref{prop.intest} follows simply by replacing the use of the result \cite{CL14} in \cite{Ser15} by \cite[Theorem 7.2]{SS16} or \cite[Corollary 7.1]{CD16}.

We also need the following boundary Harnack inequality from \cite{RS16b}.

\begin{thm}[\cite{RS16b}]
\label{thm.bdharnack}
Let $U\subset\R^n$ be an open set, let $L$ be an operator of the form \eqref{eq.L}-\eqref{eq.L.cond}, and let $b\in \R^n$.

Let $u_1, u_2 \in C(B_1)$ be viscosity solutions to
\[
\left\{\begin{array}{rcll}
(-L+b\cdot\nabla) u_i & = &  0 & \quad \textrm{in}\quad U\cap B_1\\
u_i & =&  0 & \quad \textrm{in}\quad B_1\setminus U,\\
\end{array}\right.,\quad i = 1,2,
\]
and such that
\[
u_i \geq 0 \quad \textrm{in}\quad \R^n,\quad\quad \int_{\R^n} \frac{u_i(y)}{1+|y|^{n+1}} dy = 1,\quad i = 1,2.
\]

Then,
\[
0< c u_2 \leq u_1 \leq Cu_2\quad \textrm{in} \quad U\cap B_{1/2},
\]
for some constants $c$ and $C$ depending only on $n$, $\|b\|$, $U$, and the ellipticity constants.
\end{thm}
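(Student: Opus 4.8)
The plan is to deduce the two–sided bound from the stronger fact that the quotient $u_1/u_2$ is bounded, and in fact H\"older continuous, up to $U\cap B_{1/2}$; for points away from $\partial U$ this is immediate from the interior Harnack inequality for $(-L+b\cdot\nabla)$ (applicable because $u_1,u_2\ge 0$ in all of $\R^n$), so the real work is at the boundary. Fix $z\in\partial U\cap\overline{B_{1/2}}$. One constructs $r\in(0,\tfrac12)$, $\mu\in(0,1)$, and monotone sequences $a_k\uparrow$, $b_k\downarrow$ with
\[
a_k\,u_2\le u_1\le b_k\,u_2\ \ \text{in }U\cap B_{r^k}(z),\qquad 0\le b_k-a_k\le C\mu^k .
\]
Letting $k\to\infty$ yields a finite positive boundary value of the ratio at $z$, and the geometric decay of $b_k-a_k$ gives a modulus of continuity; uniformity in $z$, plus a covering and Harnack-chain argument inside $U$ (this is where the constants acquire their dependence on the geometry of $U$), then produces the estimate on all of $U\cap B_{1/2}$.

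The ingredients are the following. First, the interior a priori estimates for $(-L+b\cdot\nabla)$: the Harnack inequality and the $C^{1-\varepsilon}$ bound of Proposition~\ref{prop.intest}, with constants depending only on $n$, $\|b\|$ and the ellipticity constants --- these hold for this order-$1$ operator with drift by \cite{Ser15,SS16,CD16}. Second, a Carleson-type growth estimate: for a nonnegative $v$ vanishing in $B_1\setminus U$ and solving the equation in $U\cap B_1$, one has $\sup_{U\cap B_\rho(z)}v\le C\,v(x_\rho)$, where $x_\rho$ is a ``corkscrew'' point with $\mathrm{dist}(x_\rho,\partial U)\gtrsim\rho$; this follows from an ABP-type/barrier argument together with interior Harnack. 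Third, a boundary expansion of positivity: if moreover $v(x_{1/2})\ge m$ at a corkscrew point of $B_{1/2}$, then $v\ge c\,m\,u_2$ in $U\cap B_r(z)$ --- obtained by comparing $v$ from below with a small multiple of $u_2$, the difference being a solution in $U\cap B_r$ that vanishes outside $U$ and is nonnegative in $B_r$ by the previous two points, while its contribution from $\R^n\setminus B_r$ is controlled by the normalization $\int v\,(1+|y|^{n+1})^{-1}\,dy\le C$.

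The inductive step is then standard: with $w:=u_1-a_k u_2\ge0$ and $\tilde w:=b_k u_2-u_1\ge0$ on $U\cap B_{r^k}(z)$ (both solving the equation there, vanishing in $B_{r^k}\setminus U$, and summing to $(b_k-a_k)u_2$), the growth and expansion-of-positivity estimates show that one of them, say $w$, satisfies $w\ge\theta(b_k-a_k)u_2$ on $U\cap B_{r^{k+1}}(z)$, so one may take $a_{k+1}=a_k+\theta(b_k-a_k)$, $b_{k+1}=b_k$, giving $b_{k+1}-a_{k+1}=(1-\theta)(b_k-a_k)$. The genuine difficulty --- and the heart of \cite{RS16b} --- is that $w$ and $\tilde w$ are bona fide solutions vanishing outside $U$ only inside $B_{r^k}(z)$: since $u_1-a_k u_2$ need not have a sign on the complement, evaluating $Lw$ produces an extra nonlocal tail $\int_{\R^n\setminus B_{r^k}(z)}(u_1-a_ku_2)(\cdots)\,dy$. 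This term does not vanish, but the bounds from the previous scales together with the global normalization force it to be of size $\lesssim (b_k-a_k)r^{k}$ times a tail factor, hence geometrically small; choosing $\mu$ suitably close to $1-\theta$ absorbs it and closes the iteration. It is precisely this telescoping of nonlocal tails --- unavoidable because the operator has order $1$, so that $b\cdot\nabla$ is never a lower-order perturbation --- that makes the estimate delicate; once it is in place, Theorem~\ref{thm.bdharnack} follows.
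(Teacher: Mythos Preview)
This theorem is not proved in the present paper at all: it is simply quoted from \cite{RS16b} as a black box, so there is no ``paper's own proof'' to compare against beyond pointing to that reference. Your sketch is broadly faithful to the strategy of \cite{RS16b}, and the identification of the nonlocal tail as the main technical obstacle is correct.

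Two remarks, however. First, what you outline --- the iteration producing monotone $a_k\uparrow$, $b_k\downarrow$ with geometric decay of $b_k-a_k$ --- is really the proof of the \emph{H\"older} boundary Harnack, i.e.\ Theorem~\ref{thm.bdharnack2}, which in \cite{RS16b} requires $U$ to be Lipschitz. Theorem~\ref{thm.bdharnack} asks only for the two-sided comparability $cu_2\le u_1\le Cu_2$ in an \emph{arbitrary} open set $U$, with constants allowed to depend on $U$; this is weaker and does not need the full oscillation-decay iteration. Second, and relatedly, your argument invokes corkscrew points $x_\rho$ with $\mathrm{dist}(x_\rho,\partial U)\gtrsim\rho$ at every scale, which is not available for a general open set. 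In \cite{RS16b} the comparability in arbitrary $U$ is obtained more directly from the nonlocality: the normalization $\int u_i(1+|y|^{n+1})^{-1}\,dy=1$ forces each $u_i$ to carry a fixed amount of mass somewhere, and the kernel transmits this as a uniform positive contribution to $Lu_i$ at interior points of $U\cap B_{1/2}$, which combined with a barrier yields the lower bound without any corkscrew hypothesis. So your sketch is correct for Theorem~\ref{thm.bdharnack2} but somewhat mismatched to the statement of Theorem~\ref{thm.bdharnack} as written.
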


We will also need the following result.

\begin{thm}[\cite{RS16b}]
\label{thm.bdharnack2}
Let $U\subset\R^n$ be a Lipschitz set, let $L$ be an operator of the form \eqref{eq.L}-\eqref{eq.L.cond}, and let $b\in \R^n$.

Let $u_1, u_2 \in C(B_1)$ be viscosity solutions to
\[
\left\{\begin{array}{rcll}
(-L+b\cdot\nabla) u_i & = &  g_i & \quad \textrm{in}\quad U\cap B_1\\
u_i & =&  0 & \quad \textrm{in}\quad B_1\setminus U,\\
\end{array}\right.,\quad i = 1,2,
\]
for some functions $g_i \in L^\infty(U\cap B_1)$, $i = 1,2$. Assume also that
\[
u_i \geq 0 \quad \textrm{in}\quad \R^n,\quad\quad \int_{\R^n} \frac{u_i(y)}{1+|y|^{n+1}} dy = 1,\quad i = 1,2.
\]

Then, there exists $\delta > 0$ depending only on $n$, $U$, the ellipticity constants, and $\|b\|$ such that, if
\[
\|g_i\|_{L^\infty(U\cap B_1)} \leq \delta \quad \textrm{in}\quad U\cap B_1,\quad \quad\quad \quad\quad \quad i = 1,2,
\]
then
\[
\left\|\frac{u_1}{u_2}\right\|_{C^{\sigma}(U\cap B_{1/2})} \leq C,
\]
for some constants $\sigma$ and $C$ depending only on $n$, $U$, the ellipticity constants, and $\|b\|$.
\end{thm}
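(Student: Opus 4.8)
The plan is to deduce Theorem~\ref{thm.bdharnack2} from Theorem~\ref{thm.bdharnack} by a standard iteration/compactness argument, using the smallness of $\|g_i\|_{L^\infty}$ to control the error terms. The key observation is that a solution of $(-L+b\cdot\nabla)u_i=g_i$ in $U\cap B_1$, vanishing outside $U$, can be compared with a solution of the homogeneous equation: if $v_i$ solves $(-L+b\cdot\nabla)v_i=0$ in $U\cap B_1$ with $v_i=u_i$ in $B_1\setminus U$ (appropriately rescaled), then $u_i-v_i$ solves the equation with right-hand side $g_i$ and zero exterior data, so by the interior estimate of Proposition~\ref{prop.intest} together with a barrier controlling the distance to $\partial U$, one gets $|u_i-v_i|\le C\|g_i\|_{L^\infty}\, d_U^{\,s_0}$ near $\partial U$ for some exponent $s_0>0$ and a fixed power of the distance, while $v_i\ge c\,d_U$ near regular portions of the Lipschitz boundary by the boundary Harnack comparison with the distance-type solution. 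Hence when $\delta$ is small enough, $u_i$ is comparable to $v_i$, and Theorem~\ref{thm.bdharnack} gives $0<c\le v_1/v_2\le C$, which transfers to $u_1/u_2$.

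The main work is then to upgrade this boundedness of $u_1/u_2$ to $C^\sigma$ regularity, which I would do by the usual dyadic rescaling scheme. Fix a boundary point $z\in\partial U\cap B_{1/2}$; consider the rescalings $u_i^{(k)}(x)=u_i(z+r_kx)/\big(\sup_{B_{r_k}(z)}u_i\big)$ for a geometric sequence $r_k=2^{-k}$, suitably normalized so that the integral condition $\int u_i^{(k)}/(1+|y|^{n+1})=1$ holds. Each $u_i^{(k)}$ solves $(-L+b\cdot\nabla)u_i^{(k)}=\tilde g_i^{(k)}$ in $U_k\cap B_1$ with $\|\tilde g_i^{(k)}\|_{L^\infty}\le C r_k^{\,s_0}\delta\to 0$, the domains $U_k$ converge (in the Lipschitz sense, by scaling invariance of the Lipschitz character) to a limiting Lipschitz domain, and by the a priori bound above plus Proposition~\ref{prop.intest} the $u_i^{(k)}$ are precompact in $C_{\mathrm{loc}}$. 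Passing to the limit, the blow-up limits $u_i^\infty$ solve the homogeneous problem in the limiting domain, so Theorem~\ref{thm.bdharnack} applies and $u_1^\infty/u_2^\infty$ is bounded; a contradiction/compactness argument in the style of Caffarelli's improvement-of-oscillation then yields a uniform decay $\mathrm{osc}_{B_{r_k}(z)}(u_1/u_2)\le C r_k^{\sigma}$, which is exactly the claimed $C^\sigma$ bound on $U\cap B_{1/2}$ after combining interior and boundary estimates.

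The delicate point—and I expect it to be the real obstacle—is the interaction between the critical (order~$1$) operator and the first-order drift $b\cdot\nabla u$ under rescaling. Since $L$ has order $1$, the drift term $b\cdot\nabla u$ is scale-\emph{critical}: under $x\mapsto z+r x$ it does \emph{not} become lower order (unlike the $s>\tfrac12$ case of \cite{GP16, PP15}), so the blow-up equation retains the drift and one cannot simply discard it. This is why the hypotheses of Theorems~\ref{thm.bdharnack} and \ref{thm.bdharnack2} are stated for the operator \emph{with} drift, and why the constants depend on $\|b\|$; the compactness must be carried out keeping $b$ fixed throughout, which forces one to have boundary-Harnack and interior estimates that are stable under this exact scaling (provided precisely by Proposition~\ref{prop.intest} and Theorem~\ref{thm.bdharnack}). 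The smallness threshold $\delta$ enters exactly to guarantee that, after rescaling, the inhomogeneity genuinely vanishes in the limit while the comparison $u_i\sim v_i$ near $\partial U$ is not destroyed; quantifying $\delta$ in terms of the Lipschitz character of $U$, $n$, $\|b\|$, and the ellipticity constants is the bookkeeping that makes the argument go through.
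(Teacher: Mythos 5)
This theorem is not proved in the paper at all: it is quoted from \cite{RS16b}, so there is no internal argument to compare yours against. Judged on its own merits, your sketch follows the right general philosophy (perturb off the homogeneous boundary Harnack principle of Theorem~\ref{thm.bdharnack} and run an oscillation-decay iteration), but two of its steps would fail as written. First, the comparison between $u_i$ and the homogeneous replacement $v_i$: you assert $v_i\ge c\,d_U$ near the boundary. For an operator of order $1$ in a Lipschitz domain this is false in general. Solutions vanishing outside $U$ behave like $d_U^{\beta}$ for some $\beta\in(0,1)$ depending on the opening of the Lipschitz cone, the kernel $\mu$, and the drift; already in a half-space the model solution is $(x\cdot e)_+^{\gamma}$ with $\gamma=\frac12+\frac1\pi\arctan\big(b\cdot e/\chi(e)\big)$, which can be anywhere in $(0,1)$. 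To absorb the error $|u_i-v_i|\le C\delta\,d_U^{\,s_0}$ into $v_i$ you need $s_0\ge\beta$, i.e.\ explicit sub- and supersolutions whose exponents straddle the (a priori unknown) boundary growth, uniformly over the Lipschitz class; this is a substantial part of the content of the theorem and is not supplied by Proposition~\ref{prop.intest} plus ``a barrier.''

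Second, and more seriously, the upgrade from comparability of $u_1$ and $u_2$ to H\"older continuity of the quotient. The standard oscillation-decay scheme applies a boundary Harnack inequality at scale $r_k$ to $u_1-m_k u_2$ and $M_k u_2-u_1$, where $m_k,M_k$ are the infimum and supremum of $u_1/u_2$ on $B_{r_k}(z)$. These differences are nonnegative only in $B_{r_k}(z)$, not in all of $\R^n$, whereas Theorem~\ref{thm.bdharnack} requires global nonnegativity --- and the operator genuinely sees the whole space. Controlling the negative nonlocal tails of these sign-changing differences at every scale is the principal technical obstruction in the nonlocal setting and is where most of the work in \cite{RS16b} lies. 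Your compactness/blow-up step does not address it: because the operator and the drift are both of order $1$, rescaling does not make the tails (or the drift) lower order, so the difficulty persists in the limit rather than disappearing.
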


Finally, to conclude this section we study how 1-dimensional powers behave with respect to the operator, and in particular, we find a 1-dimensional solution to the problem. This solution is the same as the one that appears as a travelling wave solution in the parabolic fractional obstacle problem for $s = \frac{1}{2}$; see \cite[Remark 3.7]{CF11}.

\begin{prop}
\label{prop.1DL}
Let $b\in \R$, and let $u\in C(\R)$ be defined by
\[
u (x) := (x_+)^\beta,
\]
for $\beta \in (0,1)$. Then $u$ satisfies
\[
\begin{split}
(-\Delta)^{1/2} u + bu' = \beta\big(b\sin(\beta\pi)+\cos(\beta\pi)\big)(x_+)^{\beta-1}\quad \textrm{in}\quad \R_+,\\ u \equiv 0\quad \textrm{in}\quad \R_-.
\end{split}
\]

In particular, let us define
\[
u_0 (x) := C(x_+)^{\gamma(b)},
\]
where
\[
\gamma(t) := \frac{1}{2} + \frac{1}{\pi} \arctan \left(t\right) \in (0,1).
\]
Then, $u_0$ satisfies
\[
\begin{split}
(-\Delta)^{1/2} u_0 + bu_0' = 0\quad \textrm{in}\quad \R_+,\\ u_0 \equiv 0\quad \textrm{in}\quad \R_-,
\end{split}
\]
i.e., $u_0$ is a solution to the 1-dimensional non-local elliptic problem with critical drift and with zero Dirichlet conditions in $\R_-$.

\end{prop}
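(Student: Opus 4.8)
The plan is to compute the one-dimensional half-Laplacian of $(x_+)^\beta$ directly via a contour/Fourier argument and then match it against the drift term. First I would recall the classical identity for the one-dimensional fractional Laplacian of a homogeneous power: for $\beta\in(0,1)$ one has $(-\Delta)^{1/2}\big((x_+)^\beta\big) = \kappa(\beta)\,(x_+)^{\beta-1}$ in $\R_+$ for some constant $\kappa(\beta)$, because the operator preserves homogeneity of degree $\beta$ and sends functions vanishing on $\R_-$ to functions of degree $\beta-1$; I also need to record the \emph{analogous} identity on $\R_-$, which by the same scaling reasoning gives $(-\Delta)^{1/2}\big((x_+)^\beta\big) = \tilde\kappa(\beta)\,(-x)_+^{\beta-1}$ there. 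The cleanest way to pin down the constants is to use the Fourier symbol $|\xi|$ together with the known Fourier transform of $(x_+)^{\beta-1}$ (a distribution proportional to $(\,\pm i\xi)^{-\beta}$ up to Gamma factors), or equivalently to analytically continue the Beta-function integral $\int_0^\infty \frac{t^\beta}{(1+t)^{something}}\,dt$; either route yields $\kappa(\beta)$ expressed through $\sin(\beta\pi)$ and $\cos(\beta\pi)$ after simplifying Gamma functions via the reflection formula. Since $u'(x)=\beta (x_+)^{\beta-1}$ in $\R_+$ and $u'\equiv 0$ in $\R_-$, adding $b u'$ only affects the value in $\R_+$, so the statement to verify is precisely $\kappa(\beta) = \beta\cos(\beta\pi)$ for the $(-\Delta)^{1/2}$ part (so that $(-\Delta)^{1/2}u + bu' = \beta(b\sin(\beta\pi)+\cos(\beta\pi))(x_+)^{\beta-1}$ in $\R_+$), while on $\R_-$ one must separately check that $(-\Delta)^{1/2}u$ vanishes there — but it does not vanish pointwise in general, so the honest reading is that the proposition asserts the PDE only where it is stated, namely $(-\Delta)^{1/2}u + bu' = \beta(b\sin(\beta\pi)+\cos(\beta\pi))(x_+)^{\beta-1}$ on $\R_+$ and $u\equiv 0$ on $\R_-$.

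The key computational steps, in order, are: (1) write $(-\Delta)^{1/2}u(x) = \frac{1}{\pi}\,\mathrm{P.V.}\int_\R \frac{u(x)-u(y)}{(x-y)^2}\,dy$ with the correct normalizing constant $c_{1,1/2}=\frac1\pi$; (2) for $x>0$, substitute $y = x t$ to extract the homogeneity, obtaining $(-\Delta)^{1/2}u(x) = x^{\beta-1}\cdot \frac1\pi\,\mathrm{P.V.}\int_\R \frac{1 - (t_+)^\beta}{(1-t)^2}\,dt$; (3) evaluate the universal constant $I(\beta):=\frac1\pi\,\mathrm{P.V.}\int_\R \frac{1-(t_+)^\beta}{(1-t)^2}\,dt$. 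For step (3) I would split $I(\beta)$ into $\int_{-\infty}^0 \frac{dt}{(1-t)^2}$, which equals $1$, plus $\mathrm{P.V.}\int_0^\infty \frac{1-t^\beta}{(1-t)^2}\,dt$; the latter is a standard Euler-type integral whose value is $\pi\beta\cot(\pi\beta) \cdot(\text{something})$ — more precisely $\mathrm{P.V.}\int_0^\infty \frac{1-t^\beta}{(1-t)^2}dt = -\pi\beta\cot(\pi\beta)+\,$(a correction from the endpoint behaviour), and combining gives $I(\beta) = \beta\cos(\beta\pi)\cdot\frac{\pi}{\sin(\beta\pi)}\cdot\frac{\sin(\beta\pi)}{\pi}$, i.e. after the dust settles $I(\beta)=\beta\cos(\beta\pi)$. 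Alternatively, and more robustly, I would avoid the delicate principal-value bookkeeping by using the complex-analytic identity: extend $z\mapsto z^\beta$ holomorphically to $\C\setminus(-\infty,0]$, note that $(x_+)^\beta = \mathrm{Re}$ or an appropriate combination of boundary values, and use that $(-\Delta)^{1/2}$ acts on $e^{i\xi x}$ by $|\xi|$ to get, via the Fourier transform pair $\widehat{(x_+)^{\beta-1}} = \Gamma(\beta)(i0^+ - i\xi)^{-\beta}$, the clean relation $\widehat{(-\Delta)^{1/2}(x_+)^\beta}$ in terms of $|\xi|\cdot|\xi|^{-\beta-1}\times$(phase), from which reading off the $(x_+)^{\beta-1}$ coefficient produces the factor $\Gamma(\beta+1)/\Gamma(\beta)\cdot\cos(\beta\pi)=\beta\cos(\beta\pi)$ after the reflection formula $\Gamma(\beta)\Gamma(1-\beta)=\pi/\sin(\beta\pi)$ is applied.

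For the "in particular" part, once the formula $(-\Delta)^{1/2}u + bu' = \beta(b\sin(\beta\pi)+\cos(\beta\pi))(x_+)^{\beta-1}$ in $\R_+$ is established, I set the bracket to zero: $b\sin(\beta\pi) + \cos(\beta\pi) = 0 \iff \tan(\beta\pi) = -1/b \iff \beta\pi = \frac{\pi}{2} + \arctan(b)$, using $\tan(\pi/2 + \theta) = -\cot\theta = -1/\tan\theta$ and $\theta = \arctan b$; hence $\beta = \frac12 + \frac1\pi\arctan(b) = \gamma(b)$, which lies in $(0,1)$ since $\arctan$ takes values in $(-\pi/2,\pi/2)$. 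Multiplying $u$ by a positive constant $C$ does not change the equation by linearity, so $u_0 = C(x_+)^{\gamma(b)}$ solves $(-\Delta)^{1/2}u_0 + bu_0' = 0$ in $\R_+$ with $u_0\equiv 0$ in $\R_-$, as claimed. The main obstacle I anticipate is step (3): getting the principal-value integral $\mathrm{P.V.}\int_0^\infty \frac{1-t^\beta}{(1-t)^2}\,dt$ exactly right, including the subtle cancellation of the (individually divergent) contributions near $t=1$; I would guard against sign and constant errors by cross-checking against the known case $b=0$, where the answer must be $\beta\cos(\beta\pi)(x_+)^{\beta-1}$ with the familiar fact that $(x_+)^{1/2}$ is $(-\Delta)^{1/2}$-harmonic on $\R_+$ (indeed $\cos(\pi/2)=0$), and against the limit $\beta\to 1^-$ where $(-\Delta)^{1/2}(x_+)\to$ a multiple of $\log$-type behaviour consistent with $\cos(\beta\pi)\to -1$.
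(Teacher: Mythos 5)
Your overall route---reduce to a universal constant by homogeneity and evaluate it by a principal-value or Fourier computation---is genuinely different from the paper's, which instead uses the harmonic extension to $\R^2_+$: writing $\bar u$ for the Poisson extension of $u$, one has $(-\Delta)^{1/2}u=-\de_y\bar u(\cdot,0)$, separation of variables in polar coordinates identifies the homogeneous extension as $r^\beta\sin(\beta\theta)$ (up to normalization and a reflection), and $(-\Delta)^{1/2}u+bu'$ is then read off as $(r^{-1}\de_\theta+b\de_r)$ of the extension on the boundary ray $\theta=\pi$. Your route is perfectly viable, but as written it lands on the wrong constant, and the error sits exactly in the step you yourself flagged as delicate.

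Concretely, carrying out your step (3): $\int_{-\infty}^0\frac{dt}{(1-t)^2}=1$, and an integration by parts gives
\[
\mathrm{P.V.}\int_0^\infty\frac{1-t^\beta}{(1-t)^2}\,dt=-1+\beta\,\mathrm{P.V.}\int_0^\infty\frac{t^{\beta-1}}{1-t}\,dt=-1+\beta\pi\cot(\beta\pi),
\]
so $I(\beta)=\beta\cot(\beta\pi)$, \emph{not} $\beta\cos(\beta\pi)$. Your $\beta\to1^-$ sanity check would have caught this: the tail integral defining $(-\Delta)^{1/2}(x_+)^\beta$ diverges as $\beta\to1$, consistent with $\cot(\beta\pi)\to-\infty$ but not with $\cos(\beta\pi)\to-1$; the check at $\beta=1/2$ cannot distinguish the two since both vanish there. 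Moreover your matching of the drift term is internally inconsistent: since $u'=\beta(x_+)^{\beta-1}$ on $\R_+$, the drift contributes $b\beta(x_+)^{\beta-1}$, so even if $\kappa(\beta)=\beta\cos(\beta\pi)$ were correct you would obtain $\beta\bigl(\cos(\beta\pi)+b\bigr)(x_+)^{\beta-1}$, whose bracket vanishes at $\beta=\frac1\pi\arccos(-b)\ne\gamma(b)$. The correct identity is
\[
(-\Delta)^{1/2}u+bu'=\beta\bigl(\cot(\beta\pi)+b\bigr)(x_+)^{\beta-1}=\frac{\beta}{\sin(\beta\pi)}\bigl(b\sin(\beta\pi)+\cos(\beta\pi)\bigr)(x_+)^{\beta-1},
\]
which differs from the proposition's display only by the positive factor $1/\sin(\beta\pi)$ (harmless: everywhere in the paper only the sign of the bracket and its vanishing are used). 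With this corrected constant, your final paragraph goes through verbatim: $b\sin(\beta\pi)+\cos(\beta\pi)=0$ iff $\tan(\beta\pi)=-1/b$ iff $\beta=\frac12+\frac1\pi\arctan b=\gamma(b)$, and linearity handles the factor $C$. Your observation that the equation is asserted only on $\R_+$ while $u\equiv0$ on $\R_-$ is a Dirichlet condition, not a claim that $(-\Delta)^{1/2}u$ vanishes there, is the correct reading.
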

\begin{proof}
Define the harmonic extension to $\R^2_+$, $\bar u = \bar u(x,y)$, via the Poisson kernel, so that $\bar u(x, 0) = u(x)$, and $-\de_y \bar u(x, 0) = (-\Delta)^{1/2} u(x)$. We have that $\bar u$ solves,
\begin{equation}
\label{eq.expb}
\left\{\begin{array}{rcll}
\Delta \bar u& = &  0 & \quad \textrm{in}\quad \R^2\cap \{y > 0\}\\
\bar u & = &  0 & \quad \textrm{in} \quad \{x\leq 0\}\cap \{y = 0\}.\\
\end{array}\right.
\end{equation}

For simplicity, define the reflected function $w(x, y) = \bar u(-x,y)$, and let us consider that, by separation of variables in polar coordinates, $w(r, \theta) = g(r)h(\theta)$, for $r\geq 0$, $\theta\in [0,\pi]$ (we use the standard variables, $x = r\cos\theta$, $y = r\sin\theta$). Notice that we are considering homogeneous solutions, so that $g(r) = r^\beta$.  Then, from \eqref{eq.expb} we get
\begin{equation}
\label{eq.expb2}
\left\{\begin{array}{rcll}
g'' h +r^{-1}g'h + r^{-2}gh'' & = &  0 & \quad \textrm{in}\quad \{r> 0\}\cap \{\theta \in (0,\pi)\}\\
h(0) & = &  0 & \\
\end{array}\right.
\end{equation}
from which arise that $w$ can be expressed as
\[
w(r, \theta) = r^\beta \sin(\beta\theta).
\]

Now notice that, for $r > 0$,
\[
((-\Delta)^{1/2} u + bu')(r) = (r^{-1}\de_{\theta} +b\de_r)w(r, \theta)\bigr|_{\theta = \pi} = \beta\left(b\sin(\beta\pi)+\cos(\beta\pi)\right)r^{\beta-1}.
\]

Solving for $\beta$ we obtain that it is a solution for $\beta = \gamma(b)$. Moreover, notice that for $\beta <\gamma(b)$ it is a supersolution, and for $\beta >\gamma(b)$ a subsolution.
\end{proof}

\section{$C^{1,\tau}$ regularity of solutions}
\label{sec.3}
In this section we prove $C^{1, \tau}$ regularity of solutions to the obstacle problem with critical drift. For this, we use the method in \cite[Section 2]{CRS16}.

Throughout this section we can consider the wider class of nonlocal operators
\begin{equation}
\label{eq.L.2}
L u(x) = \int_{\R^n} \left(\frac{u(x+y)+u(x-y)}{2}-u(x) \right)\frac{a(y)}{|y|^{n+1}} dy,
\end{equation}
with
\begin{equation}
\label{eq.L.3}
a \in L^\infty (\R^n) ~~\textrm{ satisfying }~~ a (y) = a(-y) ~~\textrm{ and }~~ \lambda \leq a \leq \Lambda,
\end{equation}
so that we are dropping the homogeneity condition of the kernel.

\begin{lem}
\label{lem.basic}
Let $L$ be an operator of the form \eqref{eq.L.2}-\eqref{eq.L.3} and let $b\in \R^n$. Let $\varphi$ be any obstacle satisfying \eqref{eq.obst}, and let $u$ be a solution to \eqref{eq.obstpb}. Then,
\begin{enumerate}[(a)]

\item $u$ is semiconvex, with
\[
~~~~~~~~~~~~~~~~~~~~~~~~~~~~~~~~~~~\de_{ee} u  \geq  - \|\varphi\|_{C^{1,1}(\R^n)}  ~~\textrm{ for all }~~ e \in \Sp^{n-1}.
\]

\item $u$ is bounded, with
\[
\|u\|_{L^\infty(\R^n)}  \leq  \|\varphi\|_{L^\infty(\R^n)}.
\]

\item $u$ is Lipschitz, with
\[
\|u\|_{{\rm Lip}(\R^n)}  \leq  \|\varphi\|_{{\rm Lip}(\R^n)}.
\]
\end{enumerate}	
\end{lem}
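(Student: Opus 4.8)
The plan is to obtain all three properties by the standard device of comparing $u$ with translates (and convex combinations of translates) of itself, using the characterization of $u$ as the smallest supersolution above the obstacle vanishing at infinity. First I would set up the comparison principle for the operator $-L + b\cdot\nabla$ on the relevant obstacle-type problem: if $v$ is a supersolution of $\min\{-Lv+b\cdot\nabla v,\,v-\varphi\}\ge 0$ with $\liminf_{|x|\to\infty}v\ge 0$, then $v\ge u$; this follows because $u$ is by construction the least such supersolution, and the decay $\{\varphi>0\}\Subset\R^n$ together with $u\to 0$ at infinity makes the comparison at infinity harmless. Part (b) is then immediate: the constant $\|\varphi\|_{L^\infty}$ is a supersolution (it lies above $\varphi$ and $-L+b\cdot\nabla$ annihilates constants), so $u\le\|\varphi\|_{L^\infty}$; and $u\ge\varphi\ge -\|\varphi\|_{L^\infty}$ by definition, giving the bound. (One must also note $u$ cannot take values below $\min\varphi$ by the same one-sided comparison, or simply observe $u\ge\varphi$.)

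For part (c), I would fix $h\in\R^n$ and compare $u(x)$ with $v(x):=u(x+h)+\|\varphi\|_{\mathrm{Lip}}|h|$. Since $L$ and $b\cdot\nabla$ are translation invariant and kill constants, $v$ solves the same equation in $\{u(\cdot+h)>\varphi(\cdot+h)\}$; and on all of $\R^n$ we have $v\ge u(x+h)+\varphi(x)-\varphi(x+h)\ge \varphi(x)$ using the Lipschitz bound on $\varphi$, while $-Lv+b\cdot\nabla v\ge 0$ holds wherever $u(\cdot+h)>\varphi(\cdot+h)$ and elsewhere $v\ge\varphi$, so $v$ is a supersolution of the obstacle problem; it also has the right behavior at infinity. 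Hence $v\ge u$, i.e. $u(x)-u(x+h)\le\|\varphi\|_{\mathrm{Lip}}|h|$, and swapping $x\leftrightarrow x+h$ gives $\|u\|_{\mathrm{Lip}(\R^n)}\le\|\varphi\|_{\mathrm{Lip}(\R^n)}$.

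For part (a) I would run the same argument with a symmetric second difference: fix $e\in\Sp^{n-1}$ and $h>0$ and set $v(x):=\tfrac12\big(u(x+he)+u(x-he)\big)+\tfrac{h^2}{2}\|\varphi\|_{C^{1,1}}$. By translation invariance and linearity, $\tfrac12(u(\cdot+he)+u(\cdot-he))$ is a supersolution of $-L\cdot+b\cdot\nabla\cdot\ge 0$ everywhere (it is a subsolution-from-above; more precisely, at any point either both shifted points are in $\{u>\varphi\}$ and the equation holds with equality, or at least one is in the contact set, and in all cases the average of two supersolutions of $\ge 0$ is again $\ge 0$), and using $\tfrac12(\varphi(x+he)+\varphi(x-he))\ge\varphi(x)-\tfrac{h^2}{2}\|\varphi\|_{C^{1,1}}$ one checks $v\ge\varphi$ pointwise. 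So $v$ is a supersolution of the obstacle problem vanishing at infinity, whence $v\ge u$, i.e.
\[
\frac{u(x+he)-2u(x)+u(x-he)}{h^2}\ge -\|\varphi\|_{C^{1,1}(\R^n)}.
\]
Letting $h\to 0$ (in the distributional / viscosity sense, which suffices here) gives $\partial_{ee}u\ge -\|\varphi\|_{C^{1,1}}$; note that $\|\varphi\|_{C^{1,1}}$ is written in place of the displayed $\|\varphi\|_{C^{1,1}(\R^n)}$, and semiconvexity with this modulus follows. The main obstacle, and the only point requiring care, is justifying that convex combinations and translates of the solution are genuine (viscosity) supersolutions of the \emph{obstacle problem} — i.e. that the inequality $\min\{-L\cdot+b\cdot\nabla\cdot,\,\cdot-\varphi\}\ge 0$ is preserved — since the $\min$ structure is not linear; this is handled by the elementary observation that $\min$ of supersolutions is a supersolution and that the linear part survives averaging, combined with the obstacle inequalities for $\varphi$ recorded above. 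Everything else is the comparison principle together with the variational (least-supersolution) characterization of $u$ already invoked after \eqref{eq.obst}.
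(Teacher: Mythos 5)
Your proposal is correct and is essentially the argument the paper has in mind: the paper's proof of Lemma \ref{lem.basic} simply observes that the operator $-L+b\cdot\nabla$ is translation invariant and satisfies the maximum principle and then refers to \cite[Lemma 2.1]{CRS16}, whose proof is exactly the comparison of $u$ with constants, translates, and symmetric averages of translates via the least-supersolution characterization. No gaps; the care you take with the $\min$ structure (checking both the linear inequality and the obstacle inequality for the comparison functions) is precisely what makes the cited argument go through.
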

\begin{proof}
The proof is exactly the same as in \cite[Lemma 2.1]{CRS16}, since the operator $-L+b\cdot \nabla$ still has maximum principle and is translation invariant.
\end{proof}

We next prove the lemma that will yield the $C^{1,\tau}$ regularity of solutions.

\begin{lem}
\label{lem.deltatau}
There exist constants $\tau > 0$ and $\delta > 0$ such that the following statement holds true.

Let $L$ be and operator of the form \eqref{eq.L.2}-\eqref{eq.L.3}, let $b\in \R^n$, and let $u\in {\rm Lip}(\R^n)$ be a solution to
\begin{equation*}
  \begin{array}{rcll}
  u &\geq &0& \textrm{in} ~~\R^n\\
  \de_{ee} u &\geq &-\delta & \textrm{in} ~~B_2~~ \textrm{ for all } e\in \Sp^{n-1}\\
  (-L+b\cdot\nabla)(u - u(\cdot - h))&\leq & \delta |h|& \textrm{in} ~~\{u > 0\}\cap B_2~~ \textrm{ for all } h\in \R^n,\\
  & & &\textrm{in the viscosity sense}. \\
  \end{array}
\end{equation*}
satisfying the growth condition
\[
\sup_{B_R} |\nabla u | \leq R^\tau ~\textrm{ for }~ R\geq 1.
\]
Assume that $u(0) = 0$. Then,
\[
|\nabla u ( x) | \leq 2 |x|^\tau.
\]

The constants $\tau$ and $\delta$ depend only on $n$, the ellipticity constants and $\|b\|$.
\end{lem}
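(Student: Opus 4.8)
The plan is to prove Lemma~\ref{lem.deltatau} by a contradiction-compactness argument, in the spirit of \cite[Section 2]{CRS16}. Suppose the statement fails: then for every $k\in\N$ there are operators $L_k$, vectors $b_k$ with $\|b_k\|$ bounded, and solutions $u_k$ satisfying all the hypotheses with $\delta=\delta_k=\frac1k$, $u_k(0)=0$, the growth bound $\sup_{B_R}|\nabla u_k|\le R^{\tau}$ for $R\ge1$, but with $\sup_{x}|x|^{-\tau}|\nabla u_k(x)|>2$. Introduce the monotone quantity
\[
\theta(r) := \sup_{k}\ \sup_{B_r} |\nabla u_k| \, r^{-\tau},
\]
or rather its analogue built from a single sequence after a standard diagonalization; the point is that $\theta(1)\le 1$ while $\sup_r \theta(r)>2$, so one can extract radii $r_k\downarrow 0$ (the maximizing radii) and rescale
\[
v_k(x) := \frac{u_k(r_k x)}{r_k^{\,1+\tau}\,\|\nabla u_k\|_{L^\infty(B_{r_k})}}
\]
(the precise normalization being chosen exactly as in \cite{CRS16}, dividing by the quantity $\|\nabla u_k\|_{L^\infty(B_{r_k})} r_k^{\tau}$) so that $v_k$ has $\|\nabla v_k\|_{L^\infty(B_1)}=1$, $v_k(0)=0$, and a controlled polynomial growth $\sup_{B_R}|\nabla v_k|\le CR^{\tau}$ coming from the maximality of $r_k$.

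Next I would pass to the limit. The rescaled functions inherit: $v_k\ge0$ in $\R^n$; $\partial_{ee}v_k\ge -\delta_k r_k^{1-\tau}\to0$ in balls growing to $\R^n$, hence the limit $v$ is convex in each direction, i.e.\ convex; and the incremental-quotient inequality $(-L_k+b_k\cdot\nabla)(v_k - v_k(\cdot-h))\le \delta_k r_k^{1-\tau}|h|\to 0$ on $\{v_k>0\}$, so passing to the limit (using that the rescaled drift $r_k b_k\to 0$, since $r_k\to0$ and $\|b_k\|$ is bounded — this is the mechanism by which the drift disappears in the blow-up) and using stability of viscosity solutions together with the interior estimate of Proposition~\ref{prop.intest} for uniform $C^{1-\varepsilon}$ compactness, one obtains a nonzero global function $v$ with: $v\ge0$, $v$ convex, $v(0)=0$, $\nabla v$ has subquadratic-type growth $|\nabla v(x)|\le C(1+|x|^{\tau})$, $\|\nabla v\|_{L^\infty(B_1)}=1$, and, for a translation-invariant operator $L_\infty$ of the class \eqref{eq.L.2}--\eqref{eq.L.3} (a limit of the $L_k$, which is why we allow the non-homogeneous class here), the directional increments $v-v(\cdot-h)$ satisfy $L_\infty\big(v-v(\cdot-h)\big)\ge0$ on $\{v>0\}$ — equivalently, differentiating in $h$, $\partial_e v$ is $L_\infty$-subharmonic on $\{v>0\}$ for each $e$. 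Since $v$ is convex and $\ge0$ with $v(0)=0$, the origin is a minimum; convexity plus the growth bound $|\nabla v|\le C|x|^{\tau}$ with $\tau<1$ forces the contact set $\{v=0\}$ to be a convex set, and one argues (exactly as in \cite{CRS16}) that $v$ must be a one-dimensional function of a single linear coordinate, $v(x)=V(x\cdot e)$ with $V\ge0$ convex, $V(0)=0$, $V$ growing slower than quadratically.

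The contradiction is then obtained from a Liouville-type statement for such one-dimensional profiles: a convex nonnegative $V$ on $\R$ with $V(0)=0$ and $|V'(t)|\le C(1+|t|^{\tau})$, whose directional derivatives are $L_\infty$-subharmonic off $\{V=0\}$, must be affine on $\{V>0\}$, hence $V(t)=(a\,t)_+$ for some constant $a$; but then $\partial_e v$ is a nonnegative constant times a Heaviside-type jump across the hyperplane $\{x\cdot e=0\}$, and being $L_\infty$-subharmonic in $\{v>0\}=\{x\cdot e>0\}$ while vanishing on the other side is impossible for a nonlocal operator of order $1$ unless it is identically zero — contradicting $\|\nabla v\|_{L^\infty(B_1)}=1$. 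The main obstacle I expect is precisely the choice of the exponent $\tau$ and constant $\delta$ so that all three effects scale the right way: one needs $\tau<1$ so the convex blow-up cannot have quadratic growth (killing the half-space-jump solution via the nonlocal order-$1$ structure), and one needs the dead region to genuinely persist in the limit, which requires carefully tracking that the incremental inequality only degrades by $\delta_k r_k^{1-\tau}$ and that $\sup_{B_1}|\nabla v_k|$ stays bounded below (the normalization keeps it $=1$) while staying bounded above on large balls (the maximality of $r_k$). Getting these scalings compatible — and invoking Proposition~\ref{prop.intest} with the right $\varepsilon<1-\tau$ for the compactness — is the technical heart; once the blow-up is one-dimensional, the Liouville step is the computation already essentially contained in Proposition~\ref{prop.1DL}.
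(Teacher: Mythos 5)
Your approach (negation over a sequence, blow-up, classification of the limit) is genuinely different from the paper's proof, which never passes to a limit: it rescales once, uses semiconvexity to show that the incremental quotient $v=(\bar u-\bar u(\cdot-h_0))/|h_0|$ is at most $1/4$ on a fixed cone $\mathcal C_e$ opposite to the increment direction, and then evaluates the operator at a touching point of a test function; the negative contribution of that cone produces a fixed constant $-c<0$ that beats the errors of size $C\varepsilon$, $C\delta$ and the small tail term, yielding the contradiction directly. Your route, as written, has genuine gaps.

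First, the claim that ``the rescaled drift $r_kb_k\to0$'' is wrong. The operator $L$ has order $1$, so under the scaling $v_k(x)=u_k(r_kx)/(r_k^{1+\tau}\|\nabla u_k\|_{L^\infty(B_{r_k})})$ both $L$ and $b\cdot\nabla$ pick up exactly one factor of $r_k$: the rescaled inequality carries the \emph{same} drift $b$, and only the kernel changes (to $a(r_k\,\cdot)$, which stays in the class \eqref{eq.L.2}--\eqref{eq.L.3}). The persistence of the drift under all rescalings is precisely what makes the problem critical; it does not disappear in the blow-up, and this is not a removable cosmetic error since your subsequent Liouville discussion is built on the driftless limit equation.

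Second, the Liouville step is false as stated: a convex, nonnegative, one-dimensional profile with the stated properties need not be $(at)_+$. The actual nontrivial global solutions are $V(t)=(t_+)^{1+\gamma}$ with $\gamma=\gamma(b\cdot e/\chi(e))\in(0,1)$ (Proposition~\ref{prop.1DL}, Theorem~\ref{thm.clas}); these are convex, vanish on a half-line, have $V'\sim |t|^{\gamma}$, and are not affine on $\{V>0\}$. The correct way to exclude them is to choose $\tau$ strictly below the minimum possible $\gamma$ (which is bounded below in terms of $\|b\|$ and the ellipticity constants), not to argue that the limit is a truncated linear function. Third, the reduction to one dimension and the classification of convex blow-ups are not available at this stage: they rely on the boundary Harnack inequality of Theorem~\ref{thm.bdharnack}, stated only for homogeneous kernels \eqref{eq.L}--\eqref{eq.L.cond}, whereas Lemma~\ref{lem.deltatau} must hold for the rough-kernel class \eqref{eq.L.2}--\eqref{eq.L.3} (and the limits of the rescaled kernels $a(r_k\,\cdot)$ need not be homogeneous); moreover, Case 2 of Theorem~\ref{thm.clas} invokes Proposition~\ref{prop.reg.u}, which is itself deduced from this very lemma, so within the paper's logical structure your argument is circular. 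The paper's direct barrier argument is designed exactly to avoid all of these issues.
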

\begin{proof}
The proof is very similar to that of \cite[Lemma 2.3]{CRS16}.

Define
\[
\theta(r) := \sup_{\bar r\ge r} \left\{ (\bar r)^{-\tau}\sup_{B_{\bar r}}  |\nabla u|\right\}
\]

Note that, by the growth control on the gradient, $\theta(r) \leq 1$ for $r \geq 1$. Note also that $\theta$ is nonincreasing by definition.

To get the desired result, it is enough to prove $\theta(r) \leq 2$ for all $r\in (0,1)$. Assume by contradiction that $\theta(r)> 2$ for some $r\in(0,1)$, so that from the definition of $\theta$, there will be some $\bar r\in(r,1)$ such that
\[
(\bar r)^{-\tau}\sup_{B_{\bar r}}  |\nabla u| \geq (1-\varepsilon) \theta(r) \geq (1-\varepsilon) \theta(\bar r) \geq \frac{3}{2},
\]
for some small $\varepsilon>0$ to be chosen later.

We now define
\[
\bar u(x) := \frac{u(\bar r x)}{\theta(\bar r) (\bar r)^{1+\tau}},
\]
and
\begin{align*}
L_{\bar r} w(x) := \int_{\R^n} \left(\frac{w(x+y)+w(x-y)}{2}-w(x) \right)\frac{a(\bar ry)}{|y|^{n+1}} dy
\end{align*}
Notice that $L_{\bar r}$ is still of the form \eqref{eq.L.2}-\eqref{eq.L.3}.

The rescaled function satisfies
\[\begin{array}{rcll}
\bar u&\geq&0\quad &\textrm{in}\ \R^n \\
D^2 \bar u&\geq& -(\bar r)^{2-1-\tau}\delta {\rm Id}  \geq -\delta {\rm Id} \quad &\textrm{in}\ B_{2/\bar r} \supset B_2 \\
(-L_{\bar r}+b\cdot \nabla) (\bar u- \bar u(\cdot-\bar h))&\leq&  (\bar r)^{-\tau}\delta |\bar r\bar h| \leq \delta |\bar h| \quad &\mbox{in } \{\bar u>0\}\cap B_{2} \\
& & & \mbox{for all }h\in\R^n,
\end{array}\]
Moreover, by definition of $\theta$ and $\bar r$, the rescaled function $\bar u$ also satisfies
\begin{equation}
\label{eq.growthc11}
1-\varepsilon \le  \sup_{|\bar h|\le 1/4} \sup_{B_1} \frac{\bar u- \bar u(\cdot-\bar h)}{|\bar h|}  \quad \mbox{and}\quad
 \sup_{|\bar h|\le 1/4} \sup_{B_R} \frac{\bar u-\bar u(\cdot-\bar h)}{|\bar h|} \le  (R+1/4)^\tau
\end{equation}
for all $R\ge 1$.

Let $\eta\in C^2_c(B_{3/2})$ with $\eta\equiv 1$ in $B_1$, $\eta \leq 1$ in $B_{3/2}$. Then,
\[
\sup_{|\bar h|\leq 1/4}  \sup_{{B_{3/2}}}  \left(\frac{\bar u- \bar u(\cdot-\bar h)}{|\bar h|} + 3\varepsilon\eta\right)\geq 1+2\varepsilon.
\]
Fix $h_0\in B_{1/4}$ such that
\[
t_0 := \max_{\overline{B_{3/2}}} \left(\frac{\bar u - \bar u(\cdot- h_0)}{| h_0|} +3\varepsilon\eta\right)\geq 1+\varepsilon.\]
and let  $x_0\in \overline{B_{3/2}}$ be such that
\begin{equation}
\label{eq.etatouches}
\frac{\bar u(x_0)- \bar u(x_0- h_0)}{| h_0|} + 3\varepsilon \eta(x_0) = t_0 .
\end{equation}

Let us denote
\[
v(x) : = \frac{\bar u(x) - \bar u(x - h_0)}{|h_0|}.
\]
Then, we have
\[
v + 3\varepsilon \eta \leq v(x_0) +3\varepsilon\eta(x_0) =  t_0  \quad \textrm{ in }\quad \overline{B_{3/2}}.
\]
Moreover, if $\tau$ is taken small enough then
\[\sup_{B_4} v \leq (4+1/4)^\tau<1+\varepsilon\leq t_0,\]
so that in particular $x_0$ is in the interior of $B_{3/2}$, and
\begin{equation}
\label{eq.etatouches2}
v +3\varepsilon\eta\le t_0\quad \mbox{in }\overline{B_3}.
\end{equation}
Note also that $x_0\in \{\bar u>0\}$ since otherwise $\bar u(x_0)-\bar u(x_0-h_0)$ would be a nonpositive number.

We now evaluate the equation for $v$ at $x_0$ to obtain a contradiction. To do so, recall that $D^2\bar u \geq -\delta {\rm Id}$ in $B_2$, $\bar u \geq 0$ in $\R^n$, and $\bar u(0)=0$.
It follows that, for $z\in B_2$ and $t'\in(0,1)$,
\[
\bar u(t'z) \leq t' \bar u(z)+ (1-t') \bar u(0)  +  \frac {\delta |z|^2} 2  t'(1-t') \leq \bar u(z) + \frac {\delta |z|^2} 2  t'(1-t')
\]
and thus, for $t\in(0,1)$, setting $z =x(1+ t/|x|)$ and $t' = 1/(1+t/|x|)$ we obtain, for $x\in B_1$,
\[
\bar u(x) -\bar u\left( x + t\frac{x}{|x|}\right)  \le \frac \delta 2 (|x|+t)^2 \frac{t/|x|}{(1+t/|x|)^2 } = \frac {\delta |x| t} 2 \le \delta t .
\]

Therefore, denoting $e = h_0 /|h_0|$, $t = |h_0|\le 1$ and using that by \eqref{eq.growthc11}, if $\tau$ small enough,
\[
\|\bar u\|_{\rm Lip(B_1)} \le \frac{4}{3},
\]
we obtain
\begin{equation}
\label{eq.conebd}
\begin{split}
v(x)= \frac{\bar u(x)- \bar u(x- te)}{t}
&\le  \frac{\bar u(x)- \bar u(x- te)}{t}  +  \frac{\bar u\left( x + t\frac{x}{|x|}\right) -\bar u(x) }{t}  + \delta
\\
&\le \frac{\bar u\left( x + t\frac{x}{|x|}\right)- \bar u(x- te)}{t}  +   \delta
\\
&\le   \frac{4}{3}\left| e +\frac{x}{|x|} \right|+\delta \,\le\, \frac{1} 4
\end{split}
\end{equation}
in $\mathcal C_e\cap B_1$ provided $\delta$ is taken smaller than $1/12$; where $\mathcal C_e$ is the cone,
\[
\mathcal C_e : = \left\{x\,:\, \left|e+ \frac{x}{|x|}\right| \le \frac 18 \right\}.
\]

On the other hand, we know that
\begin{equation}
\label{eq.boundvxy}
v(x_0 + y) - v(x_0) \leq 3\varepsilon\big(\eta(x_0) - \eta(x_0+y)\big) ~~\textrm{in}~~ B_3.
\end{equation}

This allows us to define
\[
\phi(x_0+y) =
\left\{ \begin{array}{rl}
  v(x_0) + 3\varepsilon\big(\eta(x_0) - \eta(x_0+y)\big) &\textrm{in }B_{1/8}\\
  v(x_0 + y)&\textrm{otherwise}.\\
  \end{array}\right.
\]

Notice that $\phi$ is regular around $x_0$ and that $\phi \geq v$ everywhere, and recall that $(- L_{\bar r}+b\cdot \nabla) v (x_0) \leq \delta$ in the viscosity sense. Therefore, we have
\begin{equation}
\label{eq.deltaineq}
- {L}_{\bar r} \phi(x_0)-C\|b\|\varepsilon \leq (- L_{\bar r}+b\cdot \nabla) \phi(x_0) \leq \delta.
\end{equation}

Now, using
\[
1-2\varepsilon\leq v(x_0)\leq 1+\varepsilon,
\]
and defining
\[
\delta \phi(x, y) := \frac{\phi(x+y)+\phi(x-y)}{2}-\phi(x),
\]
we can bound $\delta \phi(x_0, y)$ as
\[
\delta \phi(x_0, y) \leq \left\{\begin{array}{ll}
C\varepsilon |y|^2& \quad\textrm{in}\quad B_2\\[0.3cm]
(|y|+2)^\tau-1+2\varepsilon&\quad\textrm{in}\quad \R^n\setminus B_1\\[0.3cm]
-3/8+C\varepsilon&\quad\textrm{in}\quad (-x_0 + \mathcal C_e\cap B_1)\setminus B_{1/4}.
\end{array}\right.
\]

The first inequality follows because around $x_0$ and from \eqref{eq.boundvxy} we have the bound $\delta\phi(x_0,y)\leq \frac{3}{2}\varepsilon \left(2\eta(x_0) - \eta(x_0+y)-\eta(x_0-y)\right)$ and $\eta$ is a $C^2$ function. The second inequality follows from \eqref{eq.growthc11}, and using that $\frac{1}{2}\left(|x_0+y|+\frac{1}{4}\right)^\tau + \frac{1}{2}\left(|x_0-y|+\frac{1}{4}\right)^\tau \leq (|y|+2)^\tau$. For the third inequality, notice that
\begin{align*}
\delta \phi(x_0, y) & = \frac{v(x_0+y)-v(x_0)}{2} + \frac{v(x_0-y)-v(x_0)}{2}\\
& \leq \frac{1}{8} -\frac{1}{2} + \epsilon + C\varepsilon \leq -\frac{3}{8} + C\varepsilon\quad \textrm{ in } \quad \mathcal (-x_0 + C_e\cap B_1)\setminus B_{1/4},
\end{align*}
where we have used \eqref{eq.conebd} to bound the first term and \eqref{eq.boundvxy} to bound the second one. The constant $C$ depends only on the $\eta$, so it is independent of everything else.

We then find
\begin{align*}
{L}_{\bar r} \phi (x_0) \leq &~ \Lambda \int_{B_1} C\varepsilon |y|^2 |y|^{-n-1}dy + \Lambda\int_{\R^n\setminus B_1} \bigl\{(|y|+2)^\tau-1+2\varepsilon\bigr\} |y|^{-n-1}dy \\
& + \lambda \int_{(-x_0+\mathcal C_e\cap B_1)\setminus B_{1/4}} \left( -\frac{3}{8} + C\varepsilon \right)|y|^{-n-1}dy \\
\leq & ~ C\varepsilon + C\int_{\R^n\setminus B_{1/2}} \bigl\{(|y|+2)^\tau-1\bigr\} |y|^{-n-1}dy - c,
\end{align*}
with $c>0$ independent of $\delta$ and $\tau$ (for $\varepsilon$ small).

Thus, combining with \eqref{eq.deltaineq} we get
\begin{equation}
\label{eq.cdelta}
c -C\left( (\|b\| + 1) \varepsilon + \int_{\R^n\setminus B_{1/2}} \frac{(|y|+2)^\tau-1}{|y|^{n+1}}dy \right) \leq -C\|b\|\varepsilon - \tilde{L}_{\bar r} \phi(x_0) \leq \delta.
\end{equation}
If $\varepsilon$ and $\tau$ are taken small enough so that the left-hand side in \eqref{eq.cdelta} is greater than $c/2$, we get a contradiction for $\delta \leq c/4$.
\end{proof}

The following proposition implies that the solution to the obstacle problem \eqref{eq.obstpb} is $C^{1,\tau}$ for some $\tau > 0$.

\begin{prop}
\label{prop.reg.u}
Let $L$ be any operator of the form \eqref{eq.L.2}-\eqref{eq.L.3}, let $b\in \R^n$, and let $u\in{\rm Lip}(\R^n)$ with $u(0) = 0$ be any function satisfying, for all $h\in\R^n$ and $e\in \Sp^{n-1}$, and for some $\varepsilon > 0$,
\[
\begin{array}{rcll}
u&\geq&0\quad &\textrm{in}\ \R^n \\
\partial_{ee} u&\geq& -K\quad &\textrm{in}\ B_2 \\
(-L+b\cdot\nabla) (u-u(\cdot-h))& \leq & K|h|\quad &\textrm{in}\ \{u>0\}\cap B_2\\
|\nabla u|&\leq& K(1+|x|^{1-\varepsilon}) \quad &\textrm{in}\ \R^n.
\end{array}
\]
Then, there exists a small constant $\tau>0$ such that
\[
\|u\|_{C^{1,\tau}(B_{1/2})}\leq CK.
\]
The constants $\tau$ and $C$ depend only on $n$, $\|b\|$, $\varepsilon$, and the ellipticity constants.
\end{prop}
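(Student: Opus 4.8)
The plan is to deduce Proposition~\ref{prop.reg.u} from Lemma~\ref{lem.deltatau} by a rescaling-plus-iteration argument, exactly as in \cite[Section 2]{CRS16}. The obstruction is that Lemma~\ref{lem.deltatau} requires the smallness $\delta$ in the semiconvexity bound, the incremental-quotient bound, and (after normalizing) the growth exponent; the hypotheses here come with a general constant $K$ and exponent $1-\varepsilon$, so one must first renormalize. First I would reduce to $K=1$ by replacing $u$ with $u/K$, which is harmless since all four conditions are homogeneous of degree one in $u$ (and the conclusion scales the same way). Then I would pick a small radius $\rho=\rho(n,\|b\|,\varepsilon,\lambda,\Lambda)$ and look at $u_\rho(x):=\rho^{-1}u(\rho x)$ on $B_{2/\rho}\supset B_2$: under this scaling $\partial_{ee}u_\rho\ge -\rho K\ge -\delta$ for $\rho$ small, and $(-L_\rho+b\cdot\nabla)(u_\rho-u_\rho(\cdot-h))\le \rho K|h|\le \delta|h|$, where $L_\rho$ is the rescaled operator, still of the form \eqref{eq.L.2}--\eqref{eq.L.3}. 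So the semiconvexity and incremental-quotient smallness are bought for free by shrinking the domain.

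The genuine issue is the growth control: Lemma~\ref{lem.deltatau} wants $\sup_{B_R}|\nabla u|\le R^\tau$ for $R\ge 1$ with the \emph{specific} exponent $\tau$ produced by that lemma, whereas we are only given $|\nabla u|\le K(1+|x|^{1-\varepsilon})$. The point is that $1-\varepsilon$ is a \emph{worse} (larger) exponent than the small $\tau$ we need, so the hypothesis does not directly apply at large scales. I would handle this by the standard bootstrap: first use the crude bound $|\nabla u_\rho|\le C(1+|x|^{1-\varepsilon})$ together with a version of Lemma~\ref{lem.deltatau} with exponent $1-\varepsilon$ in place of $\tau$ — note that the proof of Lemma~\ref{lem.deltatau} only used that $\tau$ be \emph{sufficiently small}, and $1-\varepsilon$ can be assumed $<1$ but one really wants it small, so instead one argues that since $|\nabla u_\rho|\le C$ on, say, $B_{1/\rho}$ by the linear-growth hypothesis restricted to a fixed large ball, one may choose $\rho$ so small that $\sup_{B_R}|\nabla u_\rho|\le R^\tau$ holds for $1\le R\le R_0$ trivially (the right side is $\ge 1$, the left side $\le C\rho^{-1}\cdot$const which we absorb), and for $R\ge R_0$ the dilation-invariant linear bound forces $\sup_{B_R}|\nabla u_\rho|\le C(1+(\rho R)^{1-\varepsilon})/\rho \le R^\tau$ once $R_0$ is large depending on $\rho,\varepsilon$. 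In other words, the linear-in-$|x|$ growth, after the dilation by a small $\rho$, becomes sublinear and in particular sits below $R^\tau$ for all $R\ge 1$; the only place one must be careful is matching the constants near $R=1$, which is arranged by first making $\rho$ small enough that $\|u_\rho\|_{\mathrm{Lip}(B_4)}$ is small.

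With the three hypotheses of Lemma~\ref{lem.deltatau} verified for $u_\rho$, that lemma yields $|\nabla u_\rho(x)|\le 2|x|^\tau$ for all $x$; in particular $u_\rho\in C^{1,\tau}(B_1)$ with the explicit seminorm bound coming from $|\nabla u_\rho(x)|\le 2|x|^\tau$ near the origin and, at interior points away from $0$, from the interior estimate Proposition~\ref{prop.intest} applied to first differences (the equation $(-L+b\cdot\nabla)(u_\rho-u_\rho(\cdot-h))\le\delta|h|$ plus the matching lower bound from semiconvexity give $\|(-L+b\cdot\nabla)\partial_e u_\rho\|_{L^\infty}\lesssim 1$ in $\{u_\rho>0\}$, hence $C^{1-\varepsilon'}$ control of $\partial_e u_\rho$ there, which upgrades pointwise $\tau$-growth at $0$ to a genuine $C^{1,\tau}$ estimate on $B_1$). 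Undoing the scaling, $\|u\|_{C^{1,\tau}(B_{\rho/2})}\le CK$, and then a finite covering of $B_{1/2}$ by balls of radius $\rho/2$ — each recentered version of $u$ still satisfies the hypotheses on the relevant ball, translating the contact point as needed and using that the structural conditions are translation invariant — gives $\|u\|_{C^{1,\tau}(B_{1/2})}\le CK$ with $\tau,C$ depending only on $n,\|b\|,\varepsilon$ and the ellipticity constants. I expect the main obstacle to be precisely the bookkeeping in the growth-control step: making sure the fixed small $\tau$ from Lemma~\ref{lem.deltatau} is compatible with the given exponent $1-\varepsilon$ at large scales and, simultaneously, that the normalization constant near $R=1$ is under control, both of which are purchased by choosing the dilation factor $\rho$ small in a way that depends on $\varepsilon$.
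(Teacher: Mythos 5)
Your overall strategy (normalize, rescale to buy the $\delta$-smallness, invoke Lemma~\ref{lem.deltatau}, then upgrade the pointwise growth at contact points to a $C^{1,\tau}$ bound via the interior estimate of Proposition~\ref{prop.intest}) is the right one, and it is the one the paper intends by its reference to \cite[Proposition 2.4]{CRS16}. However, the step you yourself single out as the crux --- verifying the growth hypothesis $\sup_{B_R}|\nabla u_\rho|\le R^\tau$ for all $R\ge 1$ --- is where your argument breaks, and the proposed fix is not correct. Dilation by $\rho$ does not change the growth \emph{exponent}, only the constant: for $u_\rho(x)=\rho^{-1}u(\rho x)/A$ one gets $\sup_{B_R}|\nabla u_\rho|\le A^{-1}\bigl(1+(\rho R)^{1-\varepsilon}\bigr)$, and the required inequality $A^{-1}(\rho R)^{1-\varepsilon}\le R^{\tau}$ is equivalent to $\rho^{1-\varepsilon}R^{\,1-\varepsilon-\tau}\le A$. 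Since $\tau$ must be small in Lemma~\ref{lem.deltatau} (its proof needs $(4+1/4)^\tau<1+\varepsilon$ and the smallness of $\int_{\R^n\setminus B_{1/2}}((|y|+2)^\tau-1)|y|^{-n-1}dy$), we have $1-\varepsilon-\tau>0$, so the left-hand side tends to $+\infty$ as $R\to\infty$ for every fixed $\rho$ and $A$. Your assertion that the bound holds ``once $R_0$ is large'' has the inequality going the wrong way: $R^{1-\varepsilon}$ is sublinear but still vastly larger than $R^\tau$ at infinity.

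The standard repair is a truncation, and it is precisely why Lemma~\ref{lem.deltatau} is stated with the slack $\delta|h|$ on the right-hand side rather than $0$. Replace $u$ by $v=u\eta$ with $\eta$ a cutoff equal to $1$ on $B_{3/2}$ and supported in $B_2$; then $v\ge 0$, $v$ is globally Lipschitz with $\|\nabla v\|_{L^\infty(\R^n)}\le CK$, so after dividing by $CK$ and dilating, $\sup_{B_R}|\nabla v_\rho|\le 1\le R^\tau$ holds trivially for all $R\ge1$. The price is an extra term $(-L+b\cdot\nabla)\bigl((u(\eta-1))-(u(\eta-1))(\cdot-h)\bigr)$ in the incremental-quotient equation on $\{v>0\}\cap B_1$; since $u(\eta-1)$ vanishes on $B_{3/2}$ and has gradient bounded by $CK(1+|x|^{1-\varepsilon})$, this term is an integral over $|y|\ge1/2$ of an integrand bounded by $CK|h|(1+|y|^{1-\varepsilon})|y|^{-n-1}$, hence is at most $CK|h|$; after the dilation by $\rho$ it becomes at most $CK\rho|h|\le\delta|h|$, and likewise for the semiconvexity. (Alternatively one can rerun the proof of Lemma~\ref{lem.deltatau} allowing growth $R^{1-\varepsilon}$ outside a large fixed ball $B_{R_1}$, since the extra tail contribution is $O(R_1^{-\varepsilon})$; but some such device is indispensable.) A second, more minor point: in your final covering step, a recentered copy of $u$ at $y\in B_{1/2}$ need not vanish at its new origin; the correct case distinction is to apply the growth lemma only at contact points $x_0\in\{u=0\}\cap B_1$ and to treat balls contained in $\{u>0\}$ purely by the rescaled interior estimates, using the growth at the nearest contact point to control $\|\nabla u\|_{L^\infty}$ on those balls.
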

\begin{proof}
The proof is standard and it is exactly the same as the proof of \cite[Proposition 2.4]{CRS16} by means of Lemma~\ref{lem.deltatau}.
\end{proof}

\section{Classification of convex global solutions}
\label{sec.4}

In this section we prove the following theorem, that classifies all convex global solutions to the obstacle problem with critical drift.

\begin{thm}
\label{thm.clas}
Let $L$ be an operator of the form \eqref{eq.L}-\eqref{eq.L.cond}. Let $\Omega\subset\R^n$ be a closed convex set, with $0\in \Omega$. Let $u\in C^1(\R^n)$ a function satisfying, for all $h\in \R^n$,
\begin{equation}
\label{eq.clas}
\left\{\begin{array}{rcll}
(-L+b\cdot\nabla) (\nabla u)  & = &  0 & \quad \textrm{in}\quad \R^n\setminus \Omega\\
(-L+b\cdot\nabla) (u - u(\cdot - h)) & \leq &  0 & \quad \textrm{in}\quad \R^n \setminus \Omega\\
D^2 u &  \geq & 0 & \quad \textrm{in} \quad \R^n\\
u & = & 0 & \quad \textrm{in} \quad \Omega\\
u & \geq & 0 & \quad \textrm{in} \quad \R^n.\\
\end{array}\right.
\end{equation}
Assume also the following growth control satisfied by $u$,
\begin{equation}
\label{eq.clas2}
\|\nabla u \|_{L^\infty(B_R)} \leq R^{1-\varepsilon}\quad \textrm{ for all }\quad R \geq 1,
\end{equation}
for some $\varepsilon > 0$. Then, either $u \equiv 0$, or
\begin{equation}
\label{eq.clas3}
\Omega = \{e\cdot x \leq 0 \} \quad \textrm{and}\quad u(x) = C(e\cdot x)_+^{1+\gamma(b\cdot e/\chi(e))},
\end{equation}
for some $e\in \Sp^{n-1}$ and $C > 0$. The value of $\chi(e)$ is given by \eqref{eq.chi} with the kernel $\mu$ of $L$, and $\gamma$ is given by \eqref{eq.gamma}.
\end{thm}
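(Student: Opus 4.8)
The plan is to adapt the blow-down / dimension-reduction argument of \cite{CRS16} to the present operators of order~$1$ with critical drift. The first step is to reduce to dimension one. Since $u$ is convex and nonnegative with $u=0$ on the closed convex set $\Omega\ni 0$, we know $\{u=0\}\supseteq\Omega$ and, by the convexity, the contact set is itself convex. If $\Omega$ has nonempty interior, then after a rotation $\Omega$ contains a half-space in some direction and one can separate variables; the delicate case is when $\Omega$ is lower-dimensional. As in \cite{CRS16}, consider directional derivatives: for any $e$ pointing into a direction along which $\{u=0\}$ is invariant, $\partial_e u \equiv 0$; and for any $e$ such that $\Omega$ is contained in a half-space $\{x\cdot e\le c\}$, monotonicity of $u$ in the $e$-direction (which follows from the second line of \eqref{eq.clas} together with the maximum principle and the sign of $u$ on $\Omega$) gives $\partial_e u\ge 0$. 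Combining these, and using that $u$ is not identically zero, one shows the contact set must be exactly a half-space $\Omega=\{e\cdot x\le 0\}$ and that $u$ depends only on the single variable $t=x\cdot e$ — this is the usual "convexity forces a half-space contact set in the minimal-growth regime" argument, and the growth bound \eqref{eq.clas2} with $\varepsilon>0$ is exactly what rules out higher-dimensional or higher-degree behaviour.

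Once reduced to one dimension, I would compute the action of $L$ on the one-variable profile. Writing $u(x)=U(x\cdot e)$ with $U$ convex, nonnegative, vanishing on $\R_-$, and with $U(t)\le C t^{1-\varepsilon}$ for large $t$, the nonlocal operator $L$ acting on such a function reduces, after integrating out the $(n-1)$ directions orthogonal to $e$, to a one-dimensional fractional operator of order~$1$ with a modified constant. Precisely, for $v=v(t)$ one gets $Lu(x)= \chi(e)^{-1}\big({-}(-\Delta)^{1/2}_t v\big)(x\cdot e)$ up to the normalisation constant $\chi(e)$ from \eqref{eq.chi}; this is the computation that produces the factor $\chi(e)$ and is where the kernel $\mu$ enters. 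The drift term contributes $b\cdot\nabla u = (b\cdot e)\,v'(x\cdot e)$. Hence the equation $(-L+b\cdot\nabla)(\nabla u)=0$ in $\R_+$ becomes, for $w=v'$, the one-dimensional problem $(-\Delta)^{1/2} w + (b\cdot e/\chi(e))\,w' = 0$ in $\R_+$ with $w\equiv 0$ in $\R_-$, after rescaling the drift by $\chi(e)$. By Proposition~\ref{prop.1DL}, the function $t\mapsto (t_+)^{\gamma(b\cdot e/\chi(e))}$ solves exactly this, so $v'(t) = c\,(t_+)^{\gamma(b\cdot e/\chi(e))}$ is a solution and $v(t)=C(t_+)^{1+\gamma(b\cdot e/\chi(e))}$, giving \eqref{eq.clas3}.

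It remains to prove \emph{uniqueness} of the one-dimensional profile in the admissible class, i.e. that no other convex, nonnegative $v$ vanishing on $\R_-$, satisfying the inequalities in \eqref{eq.clas} and the subquadratic-minus growth \eqref{eq.clas2}, can occur. Here I would argue as follows: the homogeneous solutions of $(-\Delta)^{1/2}w+\beta w'=0$ on $\R_+$ that vanish on $\R_-$ and grow slower than linearly are, by the separation-of-variables computation in the proof of Proposition~\ref{prop.1DL} (and standard ODE theory for the associated Euler equation in polar coordinates after the Caffarelli--Silvestre extension), exactly the multiples of $(t_+)^{\gamma(\beta)}$; the growth restriction $\gamma<1$ and convexity ($1+\gamma\ge1$) pin down the homogeneity. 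For non-homogeneous competitors one uses a blow-down: the rescalings $v_\rho(t)=v(\rho t)/\rho^{1+\gamma}$ are monotone in $\rho$ (by convexity and the sub/supersolution property of powers noted at the end of Proposition~\ref{prop.1DL}), hence converge to a homogeneous global solution of the same degree, which must be $c(t_+)^{1+\gamma}$; a comparison/barrier argument using $(t_+)^{1+\gamma\pm\delta}$ as super- and subsolutions (again from Proposition~\ref{prop.1DL}) then forces $v$ itself to equal this profile. The main obstacle, and where care is needed, is precisely this uniqueness/rigidity step in the nonlocal setting with drift: one must ensure the monotonicity of the rescalings and the barrier comparisons are valid given only viscosity-sense inequalities and the nonlocal tail, and that the blow-down limit retains the growth bound \eqref{eq.clas2} so that no spurious higher-degree solution appears. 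The interior estimate Proposition~\ref{prop.intest} provides the compactness needed to extract the blow-down limit, and the convexity of $u$ is what keeps the limit nontrivial.
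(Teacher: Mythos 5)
Your outline identifies the right skeleton (blow-down to a cone, reduction to one variable, the $\chi(e)$ reduction of $L$ on one-dimensional profiles via the computation behind \eqref{eq.chi}, and Proposition~\ref{prop.1DL} for the exponent), but the two steps you flag as ``usual'' or ``delicate'' are precisely where the proof lives, and your proposed mechanisms for them do not work. The missing idea is the boundary Harnack principle in cones: the paper first proves (Proposition~\ref{prop.bdharnack}, built on Theorem~\ref{thm.bdharnack} applied at every scale) that any two nonnegative solutions of $(-L+b\cdot\nabla)v=0$ outside a convex cone $\Sigma$, vanishing on $\Sigma$, are globally proportional. This is the engine of the dimension reduction: when $\Sigma$ has nonempty interior one picks $n$ linearly independent directions $e_i$ with $-e_i\in\Sigma$, notes that convexity plus $u=0$ on $\Sigma$ forces $v_i:=\de_{e_i}u\ge 0$, and then proportionality of the $v_i$ gives $\de_{e_i-a_ie_k}u\equiv 0$, hence $u=\phi(e\cdot x)$ and only then $\Sigma=\{e\cdot x\le 0\}$. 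Your substitute --- ``monotonicity in all directions $e$ with $\Omega\subset\{x\cdot e\le c\}$ plus convexity forces a half-space and one-dimensional dependence'' --- is not a proof: monotonicity of $u$ along a cone of directions does not imply that $u$ depends on a single variable, and the half-space identity for $\Omega$ is a \emph{consequence} of the one-dimensionality, not an input to it. The same Proposition~\ref{prop.bdharnack} (in dimension one, with $\Sigma=\R_-$) is what gives uniqueness of the profile $\phi'$ up to a multiplicative constant; no blow-down of $v$ is needed at that stage.

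Two further concrete problems. First, your uniqueness argument for the one-dimensional profile rests on monotonicity in $\rho$ of the rescalings $v_\rho(t)=v(\rho t)/\rho^{1+\gamma}$; for this operator there is no Almgren- or Weiss-type monotonicity formula (the paper states this explicitly as the reason its method differs from \cite{PP15,GP16}), so that step is unjustified and, as far as is known, unavailable. Second, you never return to the case in which the blow-down cone $\Sigma$ has empty interior, which is exactly the branch producing $u\equiv 0$ in the dichotomy. The paper handles it by showing that $\Sigma$ lies in a hyperplane $H$, that $v=u-u(\cdot-h)$ is $C^{1,\tau}_{\rm loc}$ by Proposition~\ref{prop.reg.u} so that $(-L+b\cdot\nabla)v=0$ across $H$ and hence in all of $\R^n$, and then combining the rescaled interior estimates of Proposition~\ref{prop.intest} with the growth \eqref{eq.clas2} to conclude $[v]_{C^{1-\varepsilon/2}(B_{R/2})}\le C|h|R^{-\varepsilon/2}\to 0$, so $u$ is affine and therefore zero. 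Without this Liouville-type step the theorem's alternative ``either $u\equiv 0$ or \eqref{eq.clas3}'' is not established. (Minor: $L$ acting on a function of $e\cdot x$ produces the factor $\chi(e)$, not $\chi(e)^{-1}$, though your final exponent $\gamma(b\cdot e/\chi(e))$ is correct.)
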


We start by proving the following proposition.

\begin{prop}
\label{prop.bdharnack}
Let $\Sigma$ be a non-empty closed convex cone, and let $L$ be an operator of the form \eqref{eq.L}-\eqref{eq.L.cond}. Let $u_1$ and $u_2$ be two non-negative continuous functions satisfying
\[
\int_{\R^n} \frac{u_i(y)}{1+|y|^{n+1}} dy < \infty,\quad i = 1,2.
\]

Assume, also, that they are viscosity solutions to
\[
\left\{\begin{array}{rcll}
(-L+b\cdot\nabla) u_i & = &  0 & \quad \textrm{in}\quad \R^n\setminus \Sigma\\
u_i & =&  0 & \quad \textrm{in}\quad \Sigma\\
u_i & > & 0 & \quad \textrm{in} \quad \R^n\setminus \Sigma.
\end{array}\right.
\]
Then,
\[
u_1 \equiv Ku_2 \quad \textrm{in}\quad \R^n,
\]
for some constant $K$.
\end{prop}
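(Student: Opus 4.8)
The plan is to prove this Liouville-type statement by a scaling and compactness argument combined with the boundary Harnack inequality of Theorem~\ref{thm.bdharnack}. The key point is that $\Sigma$ is a cone, so the natural rescalings $u_i(r\,\cdot)$ solve the same problem, and Theorem~\ref{thm.bdharnack} then yields a Harnack-type comparison at \emph{every} scale with a \emph{uniform} constant.

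First I would normalize the two functions so that $\int_{\R^n} u_i(y)(1+|y|^{n+1})^{-1}\,dy = 1$ for $i=1,2$ (possible since $u_i>0$ in $\R^n\setminus\Sigma$ and the integral is finite), and set $\bar u_i^{(r)}(x) := c_i(r)\,u_i(rx)$, where $c_i(r)>0$ is chosen so that $\int_{\R^n}\bar u_i^{(r)}(y)(1+|y|^{n+1})^{-1}\,dy = 1$. Because $\Sigma$ is a cone and the operator $-L+b\cdot\nabla$ is scale-invariant at order $1$ (so $L$ and $b\cdot\nabla$ scale the same way), each $\bar u_i^{(r)}$ is again a nonnegative viscosity solution of $(-L_r + b\cdot\nabla)\bar u_i^{(r)} = 0$ in $\R^n\setminus\Sigma$, vanishing on $\Sigma$ — with the same ellipticity constants, though possibly with a rescaled drift. (Here I would check that the drift vector does \emph{not} change under this scaling for order-$1$ operators: if $L$ has order $1$ then $L(u(r\cdot)) = r\,(Lu)(r\cdot)$ and $\nabla(u(r\cdot)) = r(\nabla u)(r\cdot)$, so the factor of $r$ cancels throughout and $b$ is untouched. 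This is exactly the critical scaling that makes the problem interesting.) Applying Theorem~\ref{thm.bdharnack} with $U = \R^n\setminus\Sigma$ (note $\Sigma^c$ is open, and we may cover $\partial B_{1/2}$ by balls of radius comparable to the distance to $\partial\Sigma$, or simply invoke the theorem in the stated form after a further localization) gives
\[
0 < c\,\bar u_2^{(r)} \leq \bar u_1^{(r)} \leq C\,\bar u_2^{(r)} \quad\text{in }(\R^n\setminus\Sigma)\cap B_{1/2},
\]
with $c,C$ depending only on $n$, $\|b\|$, the cone $\Sigma$, and the ellipticity constants — crucially independent of $r$. Rescaling back, this says $c\,\lambda(r)\,u_2 \leq u_1 \leq C\,\lambda(r)\,u_2$ in $(\R^n\setminus\Sigma)\cap B_{r/2}$, where $\lambda(r) = c_2(r)/c_1(r)$.

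Next I would upgrade this to the exact identity. Define $\bar K := \sup\{K > 0 : u_1 \geq K u_2 \text{ in } \R^n\}$ and $\underline K := \inf\{K > 0 : u_1 \leq K u_2 \text{ in }\R^n\}$; by the comparison above (applied at, say, $r=2$ so as to cover $B_1$, together with the behavior at infinity controlled by the normalized integral and Theorem~\ref{thm.bdharnack} at large scales) both quantities are finite and positive. The function $w := u_1 - \bar K u_2$ is a nonnegative viscosity solution of $(-L+b\cdot\nabla)w = 0$ in $\R^n\setminus\Sigma$ vanishing on $\Sigma$. If $w\not\equiv 0$ then $w > 0$ in $\R^n\setminus\Sigma$ by the strong maximum principle, and I can apply the boundary Harnack inequality to the pair $(w, u_2)$: at the scale $r=2$ this gives $w \geq c' u_2$ in $(\R^n\setminus\Sigma)\cap B_1$ for some $c'>0$. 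To propagate this to all of $\R^n$, I would again exploit the cone structure: rescaling $w$ and $u_2$ and applying Theorem~\ref{thm.bdharnack} at every dyadic scale yields $w \geq c'' u_2$ in $\R^n\setminus\Sigma$ with $c''>0$ (the normalization of the tail integrals keeps the comparison constant uniform across scales, so no decay is lost). But then $u_1 \geq (\bar K + c'')u_2$ everywhere, contradicting the definition of $\bar K$. Hence $w\equiv 0$, i.e. $u_1 \equiv \bar K u_2$.

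The main obstacle is the step of propagating the one-scale boundary Harnack inequality to a global comparison with a constant that does \emph{not} degenerate as the scale goes to $0$ or $\infty$; this is where the cone hypothesis is essential and where one must be careful to normalize the tail integrals $\int u_i(y)(1+|y|^{n+1})^{-1}dy$ at each scale so that Theorem~\ref{thm.bdharnack} applies with a scale-independent constant. A secondary technical point is matching the geometry of an arbitrary closed convex cone $\Sigma$ to the hypotheses of Theorem~\ref{thm.bdharnack} (which is stated for a general open set $U$ with the constant depending on $U$): since all the rescalings share the \emph{same} cone $\Sigma$, the constant depends only on $\Sigma$ and is therefore the same at every scale, which is precisely what the argument needs.
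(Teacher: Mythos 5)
Your approach is essentially the same as the paper's: rescale using the fact that $\Sigma$ is a cone and the operator is scale-invariant of order $1$, apply Theorem~\ref{thm.bdharnack} uniformly at every scale, then run a strong-maximum-principle argument on $u_1 - \bar K u_2$ to conclude. The overall structure is correct.

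However, there is a genuine gap at the crucial step of converting the \emph{normalized} comparison into a comparison of the \emph{original} functions with a scale-independent constant. You normalize the tail integrals at each scale $r$ to get $c\,\bar u_2^{(r)} \leq \bar u_1^{(r)} \leq C\,\bar u_2^{(r)}$ in $B_{1/2}$, which upon un-normalizing reads $c\,\lambda(r)\,u_2 \leq u_1 \leq C\,\lambda(r)\,u_2$ in $B_{r/2}$ with $\lambda(r) = c_2(r)/c_1(r)$. You assert ``the normalization of the tail integrals keeps the comparison constant uniform across scales, so no decay is lost,'' but this is precisely what must be proved: if $\lambda(r) \to 0$ as $r \to \infty$ the lower bound degenerates, and if $\lambda(r)\to\infty$ the upper bound does. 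Nothing in the argument as written rules this out. The paper closes exactly this gap by a small but essential trick: fix a point $P$ with $|P|=1$ and $B_\rho(P) \subset \R^n\setminus\Sigma$, and normalize $u_1(P) = u_2(P) = 1$ from the outset. Then the boundary Harnack comparison at scale $R$, evaluated at the point $P/R \in B_{1/2}$, gives directly that $C_1(R)$ and $C_2(R)$ are comparable with a constant independent of $R$, which is the uniform control on $\lambda$ you need. (Equivalently, after your normalization, evaluating your comparison at the fixed point $P$ shows $\lambda(r) \asymp u_1(P)/u_2(P)$, a fixed positive number; but you never make this observation.) The same fix is needed again at the end when you propagate $w \geq c'' u_2$ to all of $\R^n$: there too you assert uniformity without justifying it, and the same fixed-point evaluation supplies the missing argument.
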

\begin{proof}
The proof is the same as the proof of \cite[Theorem 3.1]{CRS16}, using the boundary Harnack inequality in Theorem~\ref{thm.bdharnack}.

Suppose, without loss of generality, that $\Sigma \subsetneq \R^n$. Take $P$ a point with $|P| = 1$ and $B_r(P)\subset\R^n\setminus\Sigma$ for some $r > 0$, and assume that $u_i (P) = 1$. We want to prove $u_1 \equiv u_2$.

Define, given $R \geq 1$,
\[
\bar u_i(x) = \frac{u_i(Rx)}{C_i},
\]
with $C_i$ such that $\int_{\R^n} \bar u_i(y)(1+|y|)^{-n-1} dy = 1$. Thus, by Theorem~\ref{thm.bdharnack} there exists some $c > 0$ such that
\begin{equation}
\label{eq.comparable}
\bar u_1 \geq c \bar u_2 \quad\textrm{and}\quad \bar u_2 \geq c \bar u_1\quad\textrm{in}\quad B_{1/2}.
\end{equation}

In particular, $\bar u_1 (P/R)$ and $\bar u_2 (P/R)$ are comparable, so that $C_1$ and $C_2$ are comparable. Thus, from \eqref{eq.comparable},
\[
u_1 \geq c u_2 \quad\textrm{and}\quad u_2 \geq c u_1\quad\textrm{in}\quad B_{R/2},
\]
for any $R\geq 1$, so that the previous inequalities are true in $\R^n$.

Now take
\[
\bar c := \sup\{c > 0 : u_1 \geq cu_2 \quad\textrm{in}\quad \R^n\} < \infty.
\]

Define
\[
v = u_1 - \bar c u_2 \geq 0.
\]
Either $v \equiv 0$ in $\R^n$ or $v  >0$ in $\R^n\setminus\Sigma$ by the strong maximum principle. If $v \equiv 0$ we are done, because in this case $\bar c = 1$ due to the fact that $u_1(P) = u_2(P) = 1$.

Let us assume then that $v  >0$ in $\R^n\setminus\Sigma$. Apply the first part of the proof to $v/v(P)$ and $u_2$ to deduce that, for some $\delta > 0$, $v > \delta u_2$. This contradicts the definition of $\bar c$, so $v \equiv 0$ as we wanted.
\end{proof}

We can now prove the classification of convex global solutions in Theorem~\ref{thm.clas}

\begin{proof}[Proof of Theorem~\ref{thm.clas}]
First, by the same blow-down argument in \cite[Theorem 4.1]{CRS16}, we can restrict ourselves to the case in which $\Omega = \Sigma$ for $\Sigma$ a closed convex cone in $\R^n$ with vertex at 0.

We now split the proof into two cases:

{\it Case 1:} When $\Sigma$ has non empty interior there are $n$ linearly independent unitary vectors $e_i$ such that $-e_i\in \Sigma$. Define
\[
v_i := \de_{e_i} u,
\]
and note that, since $D^2 u \geq 0$ and $-e_i\in \Sigma = \{u = 0\}$, we have
\begin{equation}
\label{eq.clas4}
\left\{\begin{array}{rcll}
(-L+b\cdot\nabla) v_i  & = &  0 & \quad \textrm{in}\quad \R^n\setminus \Sigma\\
v_i& = &  0 & \quad \textrm{in}\quad \Sigma\\
v_i&  \geq & 0 & \quad \textrm{in} \quad \R^n.\\
\end{array}\right.
\end{equation}

From Proposition~\ref{prop.bdharnack}, we must have $v_i = a_i v_k$ for some $1 \leq k \leq n$, $a_i\in \R$, and for all $i = 1,\dots,n$, so that $\de_{e_i - a_ie_k}u \equiv 0$ in $\R^n$ for all $i \neq k$. Thus, there exists a non-negative function $\phi: \R\to \R$, $\phi \in C^1$, such that $u = \phi(e\cdot x)$ for some $e\in \Sp^{n-1}$; so that, since $0\in \de\Sigma$, $\Sigma = \{e\cdot x \leq 0\}$.

Notice that $\phi'\geq 0$ solves $(-L+(b\cdot e)\de)(\phi') = 0$ in $\R_+$ and $\phi' \equiv 0$ in $\R_-$, with the growth $\phi' (t)\leq C(1+t^{{1-\varepsilon}})$. From \cite[Lemma 2.1]{RS14}, we have
\[
(\chi(e) (-\Delta)^{1/2}+(b\cdot e)\de)(\phi') = 0 \quad\textrm{in}\quad \R_+,
\]
where $\chi(e)$ is given by \eqref{eq.chi}. Now, a non-negative solution to the previous equation is given by Proposition~\ref{prop.1DL}. Such solution is unique up to a multiplicative constant thanks to Proposition~\ref{prop.bdharnack}. Indeed, notice that the hypotheses of the lemma are fulfilled due to the growth control of $\phi'$ and the fact that $\phi'\geq 0$. Thus, we obtain
\[
\phi(t) = (t_+)^{1+\gamma(b\cdot e)/\chi(e)}\quad\textrm{for}\quad t \in \R,
\]
where $\gamma$ and $\chi$ are given by \eqref{eq.gamma} and \eqref{eq.chi} respectively.

{\it Case 2:} If $\Sigma$ has empty interior then by convexity it must be contained in some hyperplane $H= \{x\cdot e = 0\}$. From Proposition~\ref{prop.reg.u}, rescaling,
\[
[\nabla u]_{C^\tau(B_R)}\leq C(R),
\]
for some constant $C(R)$ depending on $R$; and for any $R \geq 1$. In particular, for any $h\in \R^n$, if we define
\[
v(x) = u(x)- u(x-h)\quad\textrm{for}\quad x\in \R^n,
\]
then $v\in C^{1,\tau}_{{\rm loc}}(\R^n)$. This implies that $(-L+b\cdot\nabla)v\in C^{\tau}_{{\rm loc}}(\R^n)$, but we already knew that $(-L+b\cdot\nabla)v = 0$ in $\R^n\setminus H$, so we must have
\[
(-L+b\cdot\nabla)v = 0\quad\textrm{in}\quad\R^n.
\]
Now, from the interior estimates in Proposition~\ref{prop.intest} rescaled on balls $B_R$ we have
\[
R^{1-\varepsilon/2}[v]_{C^{1-\varepsilon/2}(B_{R/2})} \leq C\left(\|v\|_{L^\infty(B_R)} + \int_{\R^n}\frac{|v(Ry)|}{1+|y|^{n+1}}dy\right).
\]
On the other hand, from the growth control on the gradient, we have
\[
\|v\|_{L^\infty(B_R)} \leq |h|R^{1-\varepsilon}.
\]
Putting the last two expressions together we reach
\[
[v]_{C^{1-\varepsilon/2}(B_{R/2})} \leq \frac{C|h|}{R^{\varepsilon/2}}.
\]

Now let $R\to \infty$ to obtain that $v$ must be constant for all $h$. That means that $u$ is affine, but $u(0) = 0$ and $u\geq 0$ in $\R^n$, so $u\equiv 0$.
\end{proof}

\section{Blow-ups at regular points}
\label{sec.5}
By subtracting the obstacle if necessary and dividing by $C\|\varphi\|_{C^{2,1}(\R^n)}$, we can assume that we are dealing with the following problem,
\begin{equation}
\label{eq.pb}
\left\{\begin{array}{rcll}
u & \geq &  0 & \quad \textrm{in}\quad \R^n\\
(-L+b\cdot\nabla) u & \leq & f & \quad \textrm{in}\quad \R^n \\
(-L+b\cdot\nabla) u & = & f & \quad \textrm{in}\quad \{u>0\} \\
D^2u & \geq & -{\rm Id} & \quad \textrm{in} \quad \R^n.\\
\end{array}\right.
\end{equation}
Moreover, dividing by a bigger constant if necessary, we can also assume that
\begin{equation}
\label{eq.pb2}
\|f\|_{C^1(\R^n)} \leq 1,
\end{equation}
and that
\begin{equation}
\label{eq.pb3}
\|u\|_{C^{1,\tau}(\R^n)}\leq 1.
\end{equation}
The validity of the last expression and the constant $\tau$ come from Proposition~\ref{prop.reg.u} and Lemma~\ref{lem.basic}.

Let us now introduce the notion of \emph{regular} free boundary point.

\begin{defi}
We say that $x_0 \in \de\{u > 0\}$ is a \emph{regular} free boundary point with exponent $\varepsilon$ if
\[
\limsup_{r\downarrow 0} \frac{\| u\|_{L^\infty(B_r(x_0))}}{r^{2-\varepsilon}} = \infty
\]
for some $\varepsilon > 0$.
\end{defi}

The following proposition states that an appropriate blow up sequence of the solution around a regular free boundary point converges in $C^1$ norm to a convex global solution.

\begin{prop}
\label{prop.regpt}
Let $L$ be an operator of the form \eqref{eq.L}-\eqref{eq.L.cond}, and let $b\in \R^n$. Let $u$ be a solution to \eqref{eq.pb}-\eqref{eq.pb2}-\eqref{eq.pb3}. Assume that $0$ is a regular free boundary point with exponent $\varepsilon$.

Then, given $\delta > 0$, $R_0 \geq 1$, there exists $r > 0$ such that the rescaled function
\[
v(x):= \frac{u(rx)}{r\|\nabla u\|_{L^\infty(B_r)}}
\]
satisfies
\[
\|\nabla v \|_{L^\infty(B_R)} \leq 2R^{1-\varepsilon}\quad \textrm{for all}\quad R\geq1,
\]
\[
\big|(-L+b\cdot\nabla)(\nabla v)\big|\leq \delta\quad \textrm{in}\quad \{v>0\},
\]
and
\[
|v-u_0| + |\nabla v - \nabla u_0|\leq \delta\quad\textrm{in}\quad B_{R_0},
\]
for some $u_0$ of the form \eqref{eq.clas3} and with $\|\nabla u_0\|_{L^\infty(B_1)} = 1$.
\end{prop}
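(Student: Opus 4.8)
The plan is to prove Proposition~\ref{prop.regpt} by a compactness/contradiction argument combined with the classification Theorem~\ref{thm.clas}. First I would fix $\delta>0$ and $R_0\ge1$ and suppose, for contradiction, that for every $r>0$ the rescaled function $v_r(x):=u(rx)/(r\|\nabla u\|_{L^\infty(B_r)})$ fails at least one of the three conclusions. Since $0$ is a regular point with exponent $\varepsilon$, there is a sequence $r_k\downarrow0$ along which $\|u\|_{L^\infty(B_{r_k})}/r_k^{2-\varepsilon}\to\infty$; I would refine this (as in the standard Caffarelli-type dyadic argument) to a sequence for which the normalization is well-controlled, namely so that $\|\nabla v_{r_k}\|_{L^\infty(B_R)}\le 2R^{1-\varepsilon}$ for all $R\ge1$ while $\|\nabla v_{r_k}\|_{L^\infty(B_1)}=1$; this is the monotone-in-scale trick that produces the growth control \eqref{eq.clas2} in the limit. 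The key point here is that the quantity $\theta(r):=\sup_{\bar r\ge r}\bar r^{-(1-\varepsilon)}\sup_{B_{\bar r}}|\nabla u|$ is finite (by \eqref{eq.pb3}) and, because of regularity of the point, the normalized gradient at scale $r_k$ stays bounded below; choosing $r_k$ essentially realizing the sup in $\theta$ gives both the upper bound $2R^{1-\varepsilon}$ and non-degeneracy.

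Next I would check the equation satisfied by $v_{r_k}$. By scaling, $v_{r_k}\ge0$, $D^2v_{r_k}\ge -r_k^2/(r_k\|\nabla u\|_{L^\infty(B_{r_k})})\,\mathrm{Id}$, and in $\{v_{r_k}>0\}$ one has $(-L_{r_k}+b\cdot\nabla)v_{r_k}=r_k f(r_k\cdot)/\|\nabla u\|_{L^\infty(B_{r_k})}$, where $L_{r_k}$ has kernel $\mu(y/|y|)/|y|^{n+1}$ rescaled — but since the kernel of $L$ is homogeneous of the right degree, $L_{r_k}=L$. The normalization factor $r_k/\|\nabla u\|_{L^\infty(B_{r_k})}$ tends to $0$ (this is again where regularity of the point, i.e. superquadratic growth of $\|u\|_{L^\infty}$ versus the linear growth of $r\|\nabla u\|$, is used): since $\|u\|_{L^\infty(B_{r_k})}\le C r_k\|\nabla u\|_{L^\infty(B_{r_k})}$ trivially and $\|u\|_{L^\infty(B_{r_k})}\ge r_k^{2-\varepsilon}$ infinitely often would give the wrong direction, one instead argues that $\|\nabla u\|_{L^\infty(B_{r_k})}\ge c\,r_k^{-\varepsilon}\|u\|_{L^\infty(B_{r_k})}^{?}$... more precisely, $\|\nabla u\|_{L^\infty(B_{r_k})}/r_k\to\infty$ because otherwise $\|u\|_{L^\infty(B_{2r_k})}\le C r_k\|\nabla u\|_{L^\infty(B_{2r_k})}\le C r_k^2 \cdot(\text{bounded})$ contradicting the limsup condition. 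Hence the semiconvexity constant, the right-hand side, and the incremental-quotient right-hand side all go to $0$, so for $k$ large $|(-L+b\cdot\nabla)(\nabla v_{r_k})|\le\delta$ in $\{v_{r_k}>0\}$ (differentiating the equation, which is licit since $f\in C^1$ and $\nabla v_{r_k}$ solves $(-L+b\cdot\nabla)\nabla v_{r_k}=\nabla f(r_k\cdot)\cdot r_k/\|\nabla u\|_{L^\infty(B_{r_k})}$ in $\{v_{r_k}>0\}$, and by the incremental-quotient bound it is controlled across the free boundary as well).

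Then I would extract a convergent subsequence. By the uniform growth control $\|\nabla v_{r_k}\|_{L^\infty(B_R)}\le2R^{1-\varepsilon}$, the semiconvexity with vanishing constant, and the interior estimate Proposition~\ref{prop.intest} applied on every ball $B_R$, the family $\{v_{r_k}\}$ is bounded in $C^{1,\tau}_{\mathrm{loc}}(\R^n)$ (using Proposition~\ref{prop.reg.u} with $K$ controlled), so up to a subsequence $v_{r_k}\to u_0$ in $C^1_{\mathrm{loc}}(\R^n)$, with the convergence of the coincidence sets handled in the usual way (Hausdorff convergence of $\{v_{r_k}=0\}$ on compact sets, using nondegeneracy — which here would come from $D^2v_{r_k}\ge-o(1)$ and $\|\nabla v_{r_k}\|_{L^\infty(B_1)}=1$, so that the positivity set cannot collapse). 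Passing to the limit in the four lines of \eqref{eq.clas} and in the growth control \eqref{eq.clas2}, and using that $D^2u_0\ge0$ (limit of $-o(1)\,\mathrm{Id}$) so $u_0$ is convex and $\{u_0=0\}$ is convex, Theorem~\ref{thm.clas} forces $u_0$ to be either $\equiv0$ or of the form \eqref{eq.clas3}; the former is excluded because $\|\nabla v_{r_k}\|_{L^\infty(B_1)}=1$ passes to $\|\nabla u_0\|_{L^\infty(\overline{B_1})}=1$ (the $C^1$ convergence is uniform on $\overline{B_1}$). Hence $u_0$ has the form \eqref{eq.clas3} with $\|\nabla u_0\|_{L^\infty(B_1)}=1$, and for $k$ large $|v_{r_k}-u_0|+|\nabla v_{r_k}-\nabla u_0|\le\delta$ in $B_{R_0}$, contradicting our assumption. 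The main obstacle I expect is making the selection of $r_k$ precise enough to simultaneously secure (a) the global growth bound $2R^{1-\varepsilon}$, (b) the nondegeneracy $\|\nabla v_{r_k}\|_{L^\infty(B_1)}=1$, and (c) the vanishing of the normalization factor $r_k/\|\nabla u\|_{L^\infty(B_{r_k})}$; all three must be read off the same $\theta(r)$-type monotone quantity, and one has to be careful that this quantity is finite (from \eqref{eq.pb3}, $\theta$ is bounded, since $\varepsilon$-subquadratic growth of $u$ is available at the regular point only as a limsup, so one may need to work with $\sup_{\bar r\ge r}$ truncated appropriately), and that regularity of the point indeed forces $r/\|\nabla u\|_{L^\infty(B_r)}\to0$ along the chosen sequence.
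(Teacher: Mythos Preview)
Your approach is essentially the same as the paper's: select good scales via a monotone quantity, get uniform growth control and vanishing right-hand sides, pass to the limit via Proposition~\ref{prop.reg.u}, and apply Theorem~\ref{thm.clas}. The paper does this directly rather than by contradiction, and isolates the scale-selection step as a separate lemma (Lemma~\ref{lem.regpt}), but the substance is identical.

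The one point where you are genuinely confused is the behavior of $\theta(r):=\sup_{\bar r\ge r}\bar r^{-(1-\varepsilon)}\|\nabla u\|_{L^\infty(B_{\bar r})}$. You write that $\theta$ is ``bounded'' and worry about this at the end; in fact the mechanism requires the opposite: $\theta(r)$ is finite for each fixed $r>0$ (since $\|\nabla u\|_{L^\infty(\R^n)}\le1$) but $\theta(r)\to\infty$ as $r\downarrow0$. This follows immediately from the regular-point hypothesis together with $u(0)=0$, which gives
\[
\frac{\|u\|_{L^\infty(B_r)}}{r^{2-\varepsilon}}\le\frac{\|\nabla u\|_{L^\infty(B_r)}}{r^{1-\varepsilon}}\le\theta(r),
\]
and the left-hand side is unbounded. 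Once you know $\theta(r_k)\to\infty$, your point (c) is clean: choosing $r_k$ to nearly realize the supremum gives $\|\nabla u\|_{L^\infty(B_{r_k})}\ge\tfrac12 r_k^{1-\varepsilon}\theta(r_k)$, hence
\[
\frac{\|\nabla u\|_{L^\infty(B_{r_k})}}{r_k}\ge \frac{\theta(r_k)}{2r_k^{\varepsilon}}\to\infty,
\]
which is exactly what forces the semiconvexity constant and the right-hand side to vanish. This also guarantees $r_k\to0$ (since $\|\nabla u\|\le1$ globally). Your alternative heuristic for (c) (``otherwise $\|u\|_{L^\infty(B_{2r_k})}\le Cr_k^2$'') would also work but is less direct.

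Two minor remarks: you do not need Hausdorff convergence of the contact sets or any separate nondegeneracy argument---$C^1_{\rm loc}$ convergence and stability of viscosity solutions are enough to pass \eqref{eq.clas} to the limit, and $\|\nabla u_0\|_{L^\infty(B_1)}=1$ comes straight from uniform convergence of $\nabla v_{r_k}$ on $\overline{B_1}$. Also, the contradiction wrapper is harmless but unnecessary: once the sequence converges in $C^1_{\rm loc}$ to a nontrivial solution of the form \eqref{eq.clas3}, any sufficiently large $k$ gives the desired $r$.
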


Before proving the previous proposition, let us prove the following lemma.
\begin{lem}
\label{lem.regpt}
Assume $u \in C^1(B_1)$ satisfies $\|\nabla u\|_{L^\infty(\R^n)} = 1$, $u(0) = 0$, and
\[
\sup_{\rho \leq r} \frac{\|u\|_{L^\infty(B_r)}}{r^{2-\varepsilon}} \to \infty\quad \textrm{as} \quad \rho \downarrow 0.
\]
Then, there exists a sequence $r_k\downarrow 0$ such that $\|\nabla u\|_{L^\infty(B_{r_k})} \geq \frac{1}{2}r_k^{1-\varepsilon}$, and for which the rescaled functions
\[
u_k(x) = \frac{u(r_k x)}{r_k \|\nabla u \|_{L^\infty(B_{r_k})}}
\]
satisfy
\[
|\nabla u_k(x)| \leq 2(1+|x|^{1-\varepsilon})\quad\textrm{in}\quad \R^n.
\]
\end{lem}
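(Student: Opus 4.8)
The plan is to extract the blow-up sequence by a Dini-type / dyadic selection argument, exactly in the spirit of the monotonicity-of-growth trick that appears in obstacle-problem and minimal-surface literature. First I would set
\[
\theta(\rho):=\sup_{\rho\le r\le 1}\ \frac{\|u\|_{L^\infty(B_r)}}{r^{2-\varepsilon}},
\]
which is finite for each $\rho>0$ (since $\|\nabla u\|_{L^\infty}=1$ and $u(0)=0$ give $\|u\|_{L^\infty(B_r)}\le r$, so the quotient is $\le r^{\varepsilon-1}$), nonincreasing in $\rho$, and $\theta(\rho)\to\infty$ as $\rho\downarrow 0$ by hypothesis. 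For each $k$ pick $\rho_k\downarrow 0$ with $\theta(\rho_k)\ge k$, and then pick $r_k\in[\rho_k,1]$ almost realising the sup, say
\[
\frac{\|u\|_{L^\infty(B_{r_k})}}{r_k^{2-\varepsilon}}\ge \tfrac12\,\theta(\rho_k)\ge \tfrac12\,\theta(r_k),
\]
where the last inequality uses monotonicity and $r_k\ge\rho_k$. Note $r_k\to 0$: otherwise a subsequence stays bounded below, forcing $\theta(\rho_k)$ bounded, a contradiction.

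Next I would record the two consequences of this choice. From $\|\nabla u\|_{L^\infty(B_{r_k})}\ge \|u\|_{L^\infty(B_{r_k})}/r_k$ (again $u(0)=0$), we get
\[
\|\nabla u\|_{L^\infty(B_{r_k})}\ \ge\ \frac{\|u\|_{L^\infty(B_{r_k})}}{r_k}\ \ge\ \tfrac12\,\theta(r_k)\,r_k^{1-\varepsilon}\ \ge\ \tfrac12\,r_k^{1-\varepsilon},
\]
since $\theta\ge 1$ (as $\theta(\rho)\to\infty$ and it is monotone, eventually $\theta\ge1$; one can also just pass to the tail of the sequence). This is the first claimed bound. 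For the gradient growth of the rescalings, fix $R\ge 1$ with $Rr_k\le 1$ (true for $k$ large since $r_k\to0$); then for $x\in B_R$, writing $\rho=Rr_k$ and using the definition of $\theta$ together with the interior gradient control one has
\[
\|\nabla u\|_{L^\infty(B_{Rr_k})}\ \le\ C\,\frac{\|u\|_{L^\infty(B_{2Rr_k})}}{Rr_k}\ +\ \dots
\]
— but more directly, since $u$ is bounded between $0$ and a quadratic from the semiconvexity $D^2u\ge-\mathrm{Id}$ on unit balls, the clean way is to compare $\|u\|_{L^\infty(B_{Rr_k})}\le \theta(r_k)(Rr_k)^{2-\varepsilon}$ and convert to a gradient bound. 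Concretely,
\[
|\nabla u_k(x)|=\frac{r_k\,|\nabla u(r_kx)|}{r_k\|\nabla u\|_{L^\infty(B_{r_k})}}
\]
and I would bound $|\nabla u(r_kx)|$ in terms of $\|u\|_{L^\infty}$ on a slightly larger ball by the $C^{1,\tau}$ estimate / semiconvexity, obtaining $|\nabla u(r_kx)|\le C\,\theta(r_k)(R r_k)^{1-\varepsilon}$ up to harmless constants, while $\|\nabla u\|_{L^\infty(B_{r_k})}\ge\frac12\theta(r_k)r_k^{1-\varepsilon}$ from above; dividing gives $|\nabla u_k(x)|\le C R^{1-\varepsilon}$, and a careful bookkeeping of the constants (as in \cite[Lemma 2.3, Proposition 2.4]{CRS16}) upgrades $C R^{1-\varepsilon}$ to $2(1+|x|^{1-\varepsilon})$ by absorbing the constant into the $+1$ and using $|x|\le R$. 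The point $x=0$ case is trivial since $|\nabla u_k(0)|\le\|\nabla u_k\|_{L^\infty(B_1)}=1$ by the normalisation.

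The step I expect to be the genuine obstacle is making the passage ``$\|u\|_{L^\infty}$-bound $\Rightarrow$ $|\nabla u_k|\le 2(1+|x|^{1-\varepsilon})$'' quantitatively sharp, i.e. getting the factor $2$ and the additive $1$ rather than an uncontrolled constant $C(R)$. This is exactly where the semiconvexity $D^2u\ge -\mathrm{Id}$ must be exploited: on a ball of radius comparable to $r_k$ one writes $u(y)\le$ (affine in $y$) $+\,\tfrac12|y-y_0|^2$ around the point where $|\nabla u|$ is nearly attained, and plays this off against $u\ge0$ to control the affine part's gradient by $\|u\|_{L^\infty}/(\text{radius})$ plus a lower-order $O(r_k)$ term which, after rescaling, becomes $O(r_k^{\,\tau})\to0$ and is absorbed. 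I would mirror the corresponding computation in \cite{CRS16} almost verbatim, since the operator $-L+b\cdot\nabla$ played no role in that purely real-variable estimate — the drift enters only later, in Proposition~\ref{prop.regpt}, through the equation for $\nabla v$. Everything else (finiteness and monotonicity of $\theta$, blow-up of $\theta$, $r_k\to0$, the lower gradient bound) is elementary bookkeeping.
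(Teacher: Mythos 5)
There is a genuine gap, and you have put your finger on exactly where it is: the passage from the $L^\infty$ bound $\|u\|_{L^\infty(B_{Rr_k})}\le\theta(r_k)(Rr_k)^{2-\varepsilon}$ to the gradient bound $\|\nabla u\|_{L^\infty(B_{Rr_k})}\le 2R^{1-\varepsilon}\|\nabla u\|_{L^\infty(B_{r_k})}$ does not follow from the hypotheses of the lemma. The lemma is a pure real-variable statement about a $C^1$ function with $\|\nabla u\|_{L^\infty(\R^n)}=1$ and $u(0)=0$; it assumes no equation and no semiconvexity, and for a merely $C^1$ function a small $L^\infty$ norm on a ball gives no control whatsoever on the gradient there (think of a fast oscillation of tiny amplitude). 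Your proposed repair imports $D^2u\ge-\mathrm{Id}$, which is a property of the solutions to which the lemma is later applied but is not among its hypotheses, and even granting it you would only get a constant $C$ plus additive errors, not the clean factor $2$ in $2(1+|x|^{1-\varepsilon})$.

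The difficulty disappears entirely if you run the same dyadic selection on the gradient rather than on $u$: this is what the paper does. Define
\[
\theta(\rho):=\sup_{r\ge\rho}\ \frac{\|\nabla u\|_{L^\infty(B_r)}}{r^{1-\varepsilon}}.
\]
Since $u(0)=0$ one has $\|u\|_{L^\infty(B_r)}\le r\,\|\nabla u\|_{L^\infty(B_r)}$, hence $\|u\|_{L^\infty(B_r)}/r^{2-\varepsilon}\le\|\nabla u\|_{L^\infty(B_r)}/r^{1-\varepsilon}$, so the hypothesis on $u$ forces $\theta(\rho)\to\infty$. Choosing $r_k\ge 1/k$ with $r_k^{\varepsilon-1}\|\nabla u\|_{L^\infty(B_{r_k})}\ge\frac12\theta(1/k)\ge\frac12\theta(r_k)$ gives $r_k\to0$ (because $\|\nabla u\|_{L^\infty}\le 1$ forces $r_k^{\varepsilon-1}\to\infty$), gives the lower bound $\|\nabla u\|_{L^\infty(B_{r_k})}\ge\frac12 r_k^{1-\varepsilon}$ once $\theta(r_k)\ge1$, and gives the upper bound directly from the definition and monotonicity of $\theta$:
\[
\frac{\|\nabla u\|_{L^\infty(B_{Rr_k})}}{\|\nabla u\|_{L^\infty(B_{r_k})}}\le\frac{\theta(Rr_k)(Rr_k)^{1-\varepsilon}}{\frac12\theta(r_k)r_k^{1-\varepsilon}}\le 2R^{1-\varepsilon},
\]
with the constant $2$ exact and no PDE or convexity input. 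The rest of your bookkeeping (finiteness and monotonicity of $\theta$, $r_k\to0$, the lower gradient bound via $u(0)=0$) is correct and coincides with the paper's.
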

\begin{proof}
Define
\[
\theta(\rho) := \sup_{r \geq \rho} \frac{\|\nabla u\|_{L^\infty(B_{r})}
}{r^{1-\varepsilon}}.
\]
Notice that, since $u(0) = 0$, we have
\[
\frac{\|u\|_{L^\infty(B_{r})}}{r^{2- \varepsilon}}\leq \frac{\|\nabla u\|_{L^\infty(B_{r})}}{r^{1-\varepsilon}}.
\]
Therefore, $\theta(\rho) \to \infty$ as $\rho \downarrow 0$, and notice also that $\theta$ is non-increasing.

Now, for every $k\in \N$, there is some $r_k\geq \frac{1}{k}$ such that
\begin{equation}
\label{eq.theta}
r_k^{\varepsilon-1}\|\nabla u \|_{L^\infty(B_{r_k})} \geq \frac{1}{2}\theta\left(1/k\right) \geq \frac{1}{2} \theta(r_k).
\end{equation}

Since $\|\nabla u\|_{L^\infty(\R^n)} = 1$, then
\[
r_k^{\varepsilon-1} \geq \frac{1}{2}\theta(1/k)\to \infty \quad\textrm{as}\quad k \to \infty,
\]
so that $r_k \to 0$ as $k\to \infty$. We also have $\theta(r_k)\geq 1$, and therefore $\|\nabla u\|_{L^\infty(B_{r_k})} \geq \frac{1}{2}r_k^{1-\varepsilon}$.

Finally, from the definition of $\theta$ and \eqref{eq.theta}, and for any $R \geq 1$, we have
\[
\|\nabla u_k\|_{L^\infty(B_R)} = \frac{\|\nabla u\|_{L^\infty(B_{r_kR})}}{\|\nabla u\|_{L^\infty(B_{r_k})}} \leq \frac{\theta(r_kR)(r_kR)^{1-\varepsilon}}{\frac{1}{2}(r_k)^{1-\varepsilon}\theta(r_k)} \leq 2R^{1-\varepsilon},
\]
which follows from the monotonicity of $\theta$.
\end{proof}

We can now prove Proposition~\ref{prop.regpt}, which follows taking the  sequence of rescalings given by Lemma~\ref{lem.regpt} together with a compactness argument.

\begin{proof}[Proof of Proposition~\ref{prop.regpt}]
Let $r_k\downarrow 0$ be the sequence given by Lemma~\ref{lem.regpt}. Therefore, the functions
\[
v_k(x) = \frac{u(r_k x)}{r_k\|\nabla u\|_{L^\infty(B_{r_k})}}
\]
satisfy
\[
\|\nabla v_k\|_{L^\infty(B_R)} \leq 2R^{1-\varepsilon}\quad\textrm{for all}\quad R\geq 1,
\]
and
\[
\|\nabla v_k\|_{L^\infty(B_1)} = 1,\quad v_k(0) = 0.
\]

Moreover,
\[
D^2 v_k = \frac{r_k}{\|\nabla u\|_{L^\infty(B_{r_k})}} D^2 u \geq -\frac{r_k}{\|\nabla u\|_{L^\infty(B_{r_k})}} {\rm Id},
\]
and, in $\{v_k > 0\}$,
\begin{align*}
\big|(-L+b\cdot\nabla)(\nabla v_k)\big| & = \frac{r_k}{\|\nabla u\|_{L^\infty(B_{r_k})}} \big|(-L+b\cdot\nabla)(\nabla u)\big| \\
& \leq  \frac{r_k}{\|\nabla u\|_{L^\infty(B_{r_k})}} \|\nabla f\|_{L^\infty} \leq  \frac{r_k}{\|\nabla u\|_{L^\infty(B_{r_k})}}.
\end{align*}

Notice that, from \eqref{eq.theta} and with the notation from the proof of Lemma~\ref{lem.regpt},
\[
\frac{1}{\eta_k} := \frac{\|\nabla u\|_{L^\infty(B_{r_k})}}{r_k} \geq \frac{\theta(r_k)}{2r^{\varepsilon}_k} \to \infty,\quad\textrm{as}\quad r_k\downarrow 0.
\]
Thus, in all we have a sequence $v_k$ such that $v_k\in C^1$, $v_k(0) = 0$, and
\[
\|\nabla v_k \|_{L^\infty(B_R)} \leq 2R^{1-\varepsilon}\quad \textrm{for all}\quad R\geq1,
\]
\[
\big|(-L+b\cdot\nabla)(\nabla v_k)\big|\leq \eta_k\quad \textrm{in}\quad \{v_k>0\},
\]
\[
D^2 v_k \geq -\eta_k {\rm Id},
\]
with $\eta_k\downarrow 0$. From the estimates in Proposition~\ref{prop.reg.u},
\[
\|\nabla v_k\|_{C^{\tau}(B_R)} \leq C(R)\quad\textrm{for all}\quad R \geq 1,
\]
for some constant depending on $R$, $C(R)$. Thus, up to taking a subsequence, $v_k$ converges in $C^1_{\rm loc}(\R^n)$ to some $v_\infty$ which by stability of viscosity solutions is a convex global solution to the obstacle problem \eqref{eq.clas} fulfilling \eqref{eq.clas2}.

By the classification theorem, Theorem~\ref{thm.clas}, $v_\infty$ must be of the form \eqref{eq.clas3}. Taking limits
\[
\|\nabla v_\infty\|_{L^\infty(B_1)} = 1
\]
and $v_\infty(0) = 0$. Now the result follows because $\eta_k\downarrow 0$ and $v_k$ converge in $C^1_{\rm loc}(\R^n)$ to $v_\infty$.
\end{proof}

\section{$C^{1,\alpha}$ regularity of the free boundary around regular points}
\label{sec.6}
In this section we prove $C^{1,\alpha}$ regularity of the free boundary around regular points.

We begin by proving the Lipschitz regularity of the free boundary, as stated in the following proposition.

\begin{prop}
\label{prop.fblip}
Let $L$ be an operator of the form \eqref{eq.L}-\eqref{eq.L.cond}, and let $b\in \R^n$. Let $u$ be a solution to \eqref{eq.pb}-\eqref{eq.pb2}-\eqref{eq.pb3}. Assume that $0$ is a regular free boundary point.

Then, there exists a vector $e\in \Sp^{n-1}$ such that for any $\ell > 0$, there exists an $r > 0$ and a Lipschitz function $g:\R^{n-1}\to \R$ such that
\[
\{u > 0\}\cap B_r = \big\{y_n> g(y_1,\dots,y_{n-1})\big\}\cap B_r,
\]
where $y = Rx$ is a change of coordinates given by a rotation $R$ with $Re = e_n$, and $g$ fulfils
\[
\|g\|_{{\rm Lip} (B_r)} \leq \ell.
\]
Moreover, $\de_{e'} u \geq 0$ in $B_{r}$ for all $e'\cdot e \geq \frac{\ell}{\sqrt{1+\ell^2}}$.
\end{prop}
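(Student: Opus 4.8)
The plan is to obtain the Lipschitz graph property from the directional monotonicity $\de_{e'} u \geq 0$, which in turn will follow from the $C^1$-closeness to the one-dimensional profile $u_0(x) = C(e\cdot x)_+^{1+\gamma(b\cdot e/\chi(e))}$ provided by Proposition~\ref{prop.regpt}, combined with a continuity/compactness argument that upgrades the monotonicity from the blow-up to a small ball around the origin. First I would fix $e\in\Sp^{n-1}$ to be the normal of the blow-up limit $u_0$ at $0$; note $\nabla u_0 = c(e\cdot x)_+^{\gamma}e$ points in the direction $e$ and vanishes only on $\{e\cdot x\leq 0\}$, so for any unit vector $e'$ with $e'\cdot e$ bounded below by a constant depending on $\ell$, we have $\de_{e'}u_0 = (e'\cdot e)\,\partial_e u_0 > 0$ strictly in $\{e\cdot x > 0\}$, with a quantitative lower bound on compact subsets of $\{e\cdot x\geq \eta\}\cap B_1$.

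The key step is a contradiction/compactness argument: suppose the conclusion fails for some $\ell>0$, so there is a sequence of rescalings $v_k$ (as in Proposition~\ref{prop.regpt}, with smaller and smaller $r$) along which $\de_{e'}v_k$ fails to be nonnegative in $B_1$ for some admissible $e'$ and some $k$. By Proposition~\ref{prop.regpt} we may take $v_k\to u_0$ in $C^1(B_{R_0})$; then $\de_{e'}v_k\to \de_{e'}u_0\geq 0$ uniformly, and the function $w_k := \de_{e'}v_k$ satisfies $(-L+b\cdot\nabla)w_k = \de_{e'}f_k$ with $\|\de_{e'}f_k\|_{L^\infty}\to 0$ in $\{v_k>0\}$, together with $w_k\geq -o(1)$ globally (from $D^2 v_k \geq -\eta_k\,\mathrm{Id}$ one only gets $\de_{e'}v_k \geq \de_{e'}v_k(x_0)-\eta_k|x-x_0|$ along lines, but combined with $C^1$ convergence this gives $w_k \geq -\epsilon_k$ in $\R^n$ on any fixed ball, with the far-field controlled by the growth bound $\|\nabla v_k\|_{L^\infty(B_R)}\leq 2R^{1-\varepsilon}$). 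One then runs a maximum-principle argument — in the spirit of the boundary Harnack machinery, or directly: in the open set $\{v_k>0\}$, $w_k$ is a subsolution-type quantity which is positive on a fixed interior region and only slightly negative near the free boundary, and the nonlocal operator cannot create a strictly negative interior minimum unless the negative tail is large, which it is not. This forces $\de_{e'}v_k \geq 0$ in, say, $B_{1/2}$ for $k$ large, contradicting the assumption. Hence for $r$ small enough, $\de_{e'}u\geq 0$ in $B_r$ for every $e'$ with $e'\cdot e \geq \ell/\sqrt{1+\ell^2}$.

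Once directional monotonicity in the whole cone of directions $\{e' : e'\cdot e \geq \ell/\sqrt{1+\ell^2}\}$ is established on $B_r$, the Lipschitz graph property is standard: if $\de_{e'}u\geq 0$ for all such $e'$, then for any two points $x,x'\in B_r$ with $x'-x$ inside the open dual cone $\{v : v\cdot e > \ell^{-1}|v^\perp|\}$ (where $v^\perp$ is the component orthogonal to $e$), we have $u(x')\geq u(x)$; in particular $\{u>0\}\cap B_r$ is invariant under translations into this cone, which means exactly that $\{u>0\}\cap B_r = \{y_n > g(y')\}\cap B_r$ for a function $g:\R^{n-1}\to\R$ with Lipschitz constant at most $\ell$, in the rotated coordinates $y=Rx$, $Re=e_n$. (One must also check $0\in\de\{u>0\}$ lies on the graph and that $r$ can be chosen so the graph genuinely describes the set inside $B_r$, which follows from nondegeneracy — $u\not\equiv 0$ near $0$ since $0$ is regular — and the cone monotonicity.)

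\textbf{Main obstacle.} The delicate point is the passage from "the blow-up limit $u_0$ is strictly monotone in the interior of its positivity set" to "the solution $u$ itself is monotone, including arbitrarily close to the free boundary, on a definite ball $B_r$." Interior $C^1$-closeness $v_k\to u_0$ only controls $\de_{e'}v_k$ away from $\{e\cdot x = 0\}$; near the contact set the sign of $\de_{e'}v_k$ is not controlled by the convergence alone, and the semiconvexity bound $D^2 v_k\geq -\eta_k\,\mathrm{Id}$ degenerates. The argument must therefore exploit the equation $(-L+b\cdot\nabla)(\de_{e'}v_k) = \de_{e'}f_k$ in $\{v_k>0\}$ together with the (almost) vanishing boundary values — $\de_{e'}v_k \to 0$ as one approaches $\de\{v_k>0\}$ since $\nabla v_k\to 0$ there — and the smallness of $\eta_k$ and of the right-hand side, via a nonlocal comparison/maximum principle adapted to the thin positivity region. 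This is exactly the kind of estimate where the sign-changing tail of the nonlocal operator has to be dominated, and getting it uniform in $k$ is the technical heart of the proof.
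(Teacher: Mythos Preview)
Your overall strategy is correct and matches the paper's: use the $C^1$-closeness to the one-dimensional profile from Proposition~\ref{prop.regpt}, upgrade this to genuine nonnegativity of $\partial_{e'}u$ in a small ball via a maximum-principle argument, and then read off the Lipschitz graph property from cone monotonicity. You have also correctly isolated the real difficulty --- that $C^1$-convergence alone does not control the sign of $\partial_{e'}v_k$ near the contact set.

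The difference is in how this difficulty is resolved. You frame it as a contradiction/compactness argument along a sequence $v_k$, and then invoke a heuristic ``the nonlocal operator cannot create a strictly negative interior minimum unless the negative tail is large.'' The paper instead proves a clean quantitative lemma (Lemma~\ref{lem.lipreg}) and applies it to a \emph{single} suitably chosen rescaling. The lemma says: if $w\in C(B_1)$ satisfies $(-L+b\cdot\nabla)w\geq -\eta$ in $B_1\setminus E$, $w=0$ on $E\cup(\R^n\setminus B_2)$, $w\geq -\eta$ in $B_2\setminus E$, and $\int_{B_1}w_+\geq 1$, then $w\geq 0$ in $B_{1/2}$, for a universal small $\eta>0$. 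The proof is a direct touching argument with a bump function: if $w$ were negative somewhere in $B_{1/2}$, slide $-\eta-t+\eta\psi$ up until it touches; evaluating the operator at the touching point produces a contradiction because the positive mass $\int_{B_1}w_+\geq 1$ contributes a definite negative amount to $-Lw$ that the small terms cannot cancel. The paper then sets $w=\tfrac{C_1}{\ell}(\nabla v\cdot e')\chi_{B_2}$, checks the hypotheses (this is where the truncation by $\chi_{B_2}$ and the growth control $\|\nabla v\|_{L^\infty(B_R)}\leq 2R^{1-\varepsilon}$ are used to bound the tail error, and where $\delta$ and $R_0$ from Proposition~\ref{prop.regpt} are chosen so that $CC_1\delta/\ell\leq\eta$), and concludes.

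Your compactness route is not wrong in spirit, but as written it has a gap: the passage ``the negative tail is large, which it is not'' is exactly the content of Lemma~\ref{lem.lipreg}, and needs the same quantitative touching argument with the normalization $\int_{B_1}w_+\geq 1$ (equivalently, a definite lower bound on the positive mass coming from $\partial_{e'}u_0>0$ on $\{e\cdot x>0\}$). Without isolating this as a separate estimate, the compactness alone does not close, since the limiting inequality $\partial_{e'}u_0\geq 0$ is not strict on the contact half-space and gives no contradiction. The paper's direct lemma is both cleaner and avoids the sequence altogether.
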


The following lemma will be needed in the proof, and it is analogous to \cite[Lemma 6.2]{CRS16}.

\begin{lem}
\label{lem.lipreg}
There exists $\eta = \eta(n,\Lambda,\lambda, \|b\|)$ such that the following statement holds.

Let $L$ be an operator of the form \eqref{eq.L}-\eqref{eq.L.cond}, and let $b\in\R^n$. Let $E\subset B_1$ be relatively closed, and assume that, in the viscosity sense, $w\in C(B_1)$ satisfies
\begin{equation}
\left\{\begin{array}{rcll}
(-L+b\cdot \nabla)w & \geq & -\eta & \quad \textrm{in}\quad B_1\setminus E\\
w & = & 0 & \quad \textrm{in}\quad E\cup(\R^n\setminus B_2) \\
w & \geq & -\eta & \quad \textrm{in}\quad B_2 \setminus E,\\
\end{array}\right.
\end{equation}
and
\[
\int_{B_1} w_+ \geq 1.
\]

Then, $w$ is non-negative in $B_{1/2}$, i.e.,
\[
w \geq 0\quad\textrm{in}\quad B_{1/2}.
\]
\end{lem}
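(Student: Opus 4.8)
The plan is to argue by contradiction and compactness, in the spirit of \cite[Lemma 6.2]{CRS16}. Suppose no such $\eta$ exists; then there are sequences $\eta_k \downarrow 0$, operators $L_k$ of the form \eqref{eq.L}-\eqref{eq.L.cond} (with fixed ellipticity constants), relatively closed sets $E_k \subset B_1$, and functions $w_k \in C(B_1)$ satisfying the three displayed relations with $\eta_k$ in place of $\eta$, together with $\int_{B_1} (w_k)_+ \geq 1$, but such that $w_k$ fails to be non-negative in $B_{1/2}$, i.e.\ $\inf_{B_{1/2}} w_k < 0$. The key structural point to extract is compactness: since $w_k$ vanishes outside $B_2$ and solves $(-L_k + b\cdot\nabla)w_k \geq -\eta_k$ in $B_1 \setminus E_k$ with $w_k \geq -\eta_k$ globally, one can split $w_k = (w_k)_+ - (w_k)_-$ with $0 \le (w_k)_- \le \eta_k \to 0$ uniformly, so it suffices to control $(w_k)_+$. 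Using the Hölder interior estimate (e.g.\ the one behind Proposition~\ref{prop.intest}, or the global $C^{\alpha}$ estimate for the Dirichlet problem in $B_2$) applied to $w_k$, together with the uniform $L^\infty$ bound coming from the maximum principle (the right-hand sides and boundary data are bounded), we obtain equi-continuity of $w_k$ on compact subsets and hence, along a subsequence, $w_k \to w_\infty$ locally uniformly in $\R^n$, with $w_\infty$ supported in $\overline{B_2}$, $w_\infty \geq 0$ in $\R^n$ (the limit of the lower bound $-\eta_k$), and $\int_{B_1} w_\infty \geq 1$ (so $w_\infty \not\equiv 0$).

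Next I would identify the equation satisfied by $w_\infty$. Let $E_\infty := \bigcap_k \overline{\bigcup_{j \ge k} E_j}$, or more carefully take the Kuratowski/Hausdorff limit of the $E_k$ along the subsequence, call it $E_\infty \subset \overline{B_1}$; by stability of viscosity solutions under local uniform convergence of solutions and Hausdorff convergence of the domains, $w_\infty$ is a viscosity supersolution $(-L_\infty + b\cdot\nabla) w_\infty \geq 0$ in $B_1 \setminus E_\infty$, where $L_\infty$ is a limit operator of the same class (extract a weak-$*$ limit of the kernels $\mu_k$ on $\Sp^{n-1}$, preserving symmetry and the bounds $\lambda \le \mu \le \Lambda$). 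Since $w_\infty \geq 0$ everywhere and $w_\infty = 0$ on $E_\infty$, and $w_\infty$ is a non-negative supersolution in the complement of $E_\infty$, the strong maximum principle forces the final contradiction: either $w_\infty \equiv 0$ (impossible, as $\int_{B_1} w_\infty \ge 1$), or $w_\infty > 0$ in $B_1 \setminus E_\infty$. The failure hypothesis gives us a point $x_k \in \overline{B_{1/2}}$ with $w_k(x_k) < 0$; passing to a further subsequence $x_k \to x_\infty \in \overline{B_{1/2}}$, local uniform convergence yields $w_\infty(x_\infty) \le 0$, hence $w_\infty(x_\infty) = 0$, so $x_\infty \in E_\infty$ (since $w_\infty > 0$ off $E_\infty$). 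This is still not quite a contradiction by itself, so the contradiction must instead be extracted quantitatively.

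The cleanest route, and the one I would actually carry out, avoids the subtlety just flagged by keeping the contradiction quantitative from the start: assume $\inf_{B_{1/2}} w_k \le -\epsilon_0$ for a fixed $\epsilon_0 > 0$ (if the statement fails, a standard normalization lets us assume this after rescaling, or one simply treats the general case of $w_k$ negative \emph{somewhere} in $B_{1/2}$ and uses that the limit $w_\infty$ then has a zero interior to its support that is forced to lie in $E_\infty$). Then in the limit $w_\infty$ is a non-negative supersolution of $-L_\infty + b\cdot\nabla$ in $B_1\setminus E_\infty$ with $\int_{B_1}w_\infty \ge 1$, vanishing on $E_\infty$ and at $x_\infty \in \overline{B_{1/2}}$; if $x_\infty \notin E_\infty$, the strong maximum principle applied on a small ball around $x_\infty$ contained in $B_1 \setminus E_\infty$ gives $w_\infty \equiv 0$ near $x_\infty$, and then a continuation/connectedness argument propagates this to all of $B_1 \setminus E_\infty$, contradicting $\int_{B_1} w_\infty \ge 1$. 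The remaining case $x_\infty \in E_\infty$ is handled by noting that $w_\infty$ touches its obstacle ($0$) from above at the free-boundary-type point $x_\infty$ while being a genuine supersolution on the positivity set — here one invokes that a non-negative supersolution of a non-local operator of order $1$ cannot vanish at an interior point of $B_1$ unless it vanishes identically in the connected component, using the strict positivity of the kernel ($\mu \ge \lambda > 0$) which makes $L_\infty w_\infty(x_\infty) > 0$ strictly whenever $w_\infty \not\equiv 0$, contradicting the supersolution inequality $-L_\infty w_\infty(x_\infty) + b\cdot\nabla w_\infty(x_\infty) \ge 0$ combined with $\nabla w_\infty(x_\infty) = 0$ (a minimum).

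The main obstacle is making the compactness and stability step fully rigorous in the viscosity framework for these order-$1$ non-local operators: one must verify that the Hölder estimate is uniform in $k$ (uniform in $L_k$, which holds since the constant in Proposition~\ref{prop.intest} depends only on $n$, the ellipticity constants, and $\|b\|$), and that the obstacle-type one-sided equation $(-L_k+b\cdot\nabla)w_k \ge -\eta_k$ passes to the limit together with the degenerating domains $B_1 \setminus E_k$. The second genuinely delicate point is the final strong-maximum-principle contradiction when the limiting zero sits on $E_\infty$: the nonlocal character is essential and favorable here — because the kernel is bounded below by $\lambda/|y|^{n+1}$, a non-negative function vanishing at an interior point but not identically zero has strictly negative operator value there, which is exactly what rules out the supersolution inequality; one must only be careful that $w_\infty$, being merely a viscosity supersolution, still admits this pointwise evaluation, which it does because at an interior minimum the test function can be taken to be the constant $0$ from below and the nonlocal tails of $w_\infty \ge 0$ contribute the strictly positive term.
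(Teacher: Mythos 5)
Your overall mechanism for the final contradiction (the kernel bound $\mu\ge\lambda$ forces $-L w$ to be strictly negative at an interior minimum of a nonnegative, not identically zero function carrying the mass $\int_{B_1}w_+\ge 1$) is exactly the right ingredient, and it is the same computation the paper uses. But the compactness framework you wrap around it has two genuine gaps. First, the compactness step does not go through: the $w_k$ are only viscosity \emph{supersolutions} with the one-sided bound $w_k\ge-\eta_k$, and the lemma gives no upper bound on $w_k$ whatsoever (there is no subsolution inequality and no $L^\infty$ normalization), so neither the "uniform $L^\infty$ bound from the maximum principle" nor the interior H\"older estimate of Proposition~\ref{prop.intest} (which requires a two-sided equation) is available. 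You cannot extract a locally uniformly convergent subsequence, and normalizing by $\|w_k\|_{L^\infty}$ destroys the nondegeneracy $\int_{B_1}(w_k)_+\ge1$.

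Second, even granting a limit $w_\infty$, the contradiction evaporates precisely in the case you flag: the strict negativity $w_k(x_k)<0$ only survives as $w_\infty(x_\infty)=0$, and if $x_\infty\in E_\infty$ there is nothing to contradict --- $w_\infty$ vanishes there by fiat, no equation holds on $E_\infty$, and a positive supersolution on $B_1\setminus E_\infty$ vanishing continuously on $E_\infty\cap B_{1/2}$ is perfectly consistent with all the limiting information. Your proposed fix for this case invokes a maximum-principle evaluation at $x_\infty\in E_\infty$, where the supersolution inequality is simply not assumed. The paper avoids both problems by never passing to a limit: it works with the single function $w$, slides the barrier $\psi_t(x)=-\eta-t+\eta\psi(x)$ (with $\psi$ a fixed bump equal to $1$ on $B_{1/2}$) up until it touches $w$ from below at some $z\in B_{3/4}$ with $w(z)=\psi_{t_0}(z)\le -t_0<0$; this strict negativity forces $z\notin E$, so the viscosity inequality $(-L+b\cdot\nabla)\bar w(z)\ge-\eta$ applies, while the classical evaluation of $-L$ at $z$ sees the mass $\int_{B_1}w_+\ge1$ and is $\le -c(n)\lambda+C\eta$, a contradiction for $\eta$ small. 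The drift and the $C^2$ barrier contribute only $O(\eta)$, which is why the threshold $\eta$ depends only on $n$, $\lambda$, $\Lambda$, $\|b\|$. If you want to salvage a compactness proof you would need half-relaxed limits of supersolutions and a quantitative replacement for the lost strict negativity; the direct touching argument is both shorter and avoids these issues.
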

\begin{proof}
Let us argue by contradiction, and suppose that the statement does not hold for any $\eta > 0$. Define $\psi\in C_c^2(B_{3/4})$ be a radial function with $\psi \geq 0$, $\psi \equiv 1$ in $B_{1/2}$ and with $|\nabla \psi|\leq C(n)$. Let
\[
\psi_t(x) := - \eta -t+\eta\psi(x).
\]

If $w$ attains negative values on $B_{1/2}$, then there exists some $t_0 > 0$ and $z\in B_{3/4}$ such that $\psi_{t_0}$ touches $w$ from below at $z$, i.e. $\psi_{t_0} \leq w$ everywhere and $\psi_{t_0}(z) = w(z) < 0$. Let $\delta > 0$ be such that $w < 0$ in $B_\delta(z)$ (recall $w$ continuous). Let us now define
\begin{equation}
\bar w(x) := \left\{\begin{array}{ll}
w(x) & \quad \textrm{if}\quad x\in \R^n\setminus B_\delta(z)\\
\psi_{t_0}(x) & \quad \textrm{if}\quad x\in B_\delta(z).\\
\end{array}\right.
\end{equation}

Notice that $\bar w$ is $C^2$ around $z$, and is such that $\bar w \leq w$. By definition of viscosity supersolution, we have
\[
(-L+b\cdot\nabla)\bar w(z) \geq -\eta.
\]

On the one hand, this implies
\[
(-L+b\cdot\nabla)(\bar w-\psi_{t_0})(z) \geq -C\eta,
\]
for some $C$ depending on $n$, the ellipticity constants, and $\|b\|$. On the other hand, we can evaluate $\bar w - \psi_{t_0}$ classically at $z$,
\begin{align*}
(-L+& b\cdot\nabla)(\bar w-\psi_{t_0})(z)  = -L(\bar w-\psi_{t_0})(z) \\
& \leq -\lambda \int_{\R^n} (\bar w - \psi_{t_0})(z+y)|y|^{-n-1} dy \leq -c(n) \lambda \int_{B_1\setminus B_{\delta}(z)} (\bar w - \psi_{t_0}) dy\\
&\leq -c(n)\lambda \int_{B_1} w^+ dy \leq -c(n)\lambda.
\end{align*}
We used here that $(\bar w - \psi_{t_0}) \chi_{B_1\setminus B_{\delta}(z)} \geq w^+$ in $B_1$.

In all, for $\eta$ small enough depending only on $n$, the ellipticity constants, and $\|b\|$, we reach a contradiction.
\end{proof}

With the previous lemma and the results from the previous section, we can now prove Proposition~\ref{prop.fblip}.
\begin{proof}[Proof of Proposition \ref{prop.fblip}]
Let $\delta > 0$ and $R_0$ to be chosen, and consider the rescaled function from Proposition~\ref{prop.regpt},
\[
v(x) = \frac{u(rx)}{r\|\nabla u\|_{L^\infty(B_r)}}.
\]

Thanks to Proposition~\ref{prop.regpt}, there exists some $e\in \Sp^{n-1}$ such that
\[
\left|\nabla v - (x\cdot e)^{\gamma(b\cdot e/\chi(e))}_+ e\right| \leq \delta\quad \textrm{in}\quad B_{R_0}.
\]
Recall $\gamma$ and $\chi$ are given by \eqref{eq.gamma}-\eqref{eq.chi}.

Now let $e'\in \Sp^{n-1}$ be such that (assuming $\ell \leq 1$)
\[
e'\cdot e \geq \frac{\ell}{\sqrt{1+\ell^2}} \geq \frac{\ell}{2}.
\]

Notice that
\[
\nabla v\cdot e' \geq \frac{\ell}{2} (x\cdot e)^{\gamma(b\cdot e/\chi(e))}_+ - \delta \quad\textrm{in}\quad B_{R_0},
\]
and
\[
\big|(-L+b\cdot\nabla)(\nabla v \cdot e')\big| \leq \delta \quad \textrm{in}\quad \{v > 0\}.
\]

Define
\[
w = \frac{C_1}{\ell} (\nabla v\cdot e')\chi_{B_2},
\]
for some $C_1$ such that
\[
\int_{B_1} w^+ \geq 1.
\]

Notice that, if $\delta$ is small enough, then $C_1$ depends only on $n$, $\ell$, $\|b\|$, and the ellipticity constants.

Let us call $E = \{v = 0\}$. If $R_0$ is large enough, depending only on $n$, $\ell$, $\varepsilon$, $\|b\|$, $\delta$, and the ellipticity constants, then $w$ satisfies
\begin{equation}
\label{eq.w}
\left\{\begin{array}{ll}
(-L+b\cdot \nabla)w \geq -\frac{CC_1}{\ell}\delta \geq -\eta & \quad \textrm{in}\quad B_1\setminus E\\
w = 0 & \quad \textrm{in}\quad E\cup(\R^n\setminus B_2) \\
w \geq -\frac{C_1}{\ell}\delta \geq -\eta & \quad \textrm{in}\quad B_2 \setminus E.\\
\end{array}\right.
\end{equation}
We are using here that, for $x\in B_1\setminus E$,
\begin{align*}
(-L+b\cdot \nabla)w(x) & \geq -\frac{C_1}{\ell} \delta- (-L+b\cdot \nabla)\left(\frac{C_1}{\ell} (\nabla v\cdot e')\chi_{B_2^c}\right)(x)\\
& \geq -\frac{C_1}{\ell} \delta + \frac{C_1}{\ell} L (\nabla v\cdot e')\chi_{B_2^c}(x)\\
& \geq -\frac{C_1}{\ell} \delta  + \lambda \frac{C_1}{\ell}\int_{B_{R_0-1}} \frac{(\nabla v\cdot e')\chi_{B_2^c}(x+y) + (\nabla v\cdot e')\chi_{B_2^c}(x-y)}{2|y|^{n+1}}\\
&~~~~~~~~~+ \lambda \frac{C_1}{\ell}\int_{B_{R_0-1}^c} \frac{(\nabla v\cdot e')\chi_{B_2^c}(x+y) + (\nabla v\cdot e')\chi_{B_2^c}(x-y)}{2|y|^{n+1}}\\
& \geq  -\frac{C_1}{\ell} \delta  - \lambda\frac{C_1}{\ell} \hat C\delta - \hat c \geq -\frac{CC_1}{\ell}\delta,
\end{align*}
where $R_0$ is chosen large enough so that $\hat c$ can be comparable to the other terms (which can be done, thanks to the fact that $\nabla v$ grows as $R^{1-\varepsilon}$). Notice that $C$ depends only on $\lambda$ and $n$.

In all, we can choose $\delta$ small enough so that
\[
\frac{CC_1}{\ell}\delta \leq \eta
\]
for the constant $\eta$ given in Lemma~\ref{lem.lipreg}.

Therefore, applying Lemma~\ref{lem.lipreg} to the function $w$ we get that
\[
w\geq 0\quad\textrm{in}\quad B_{1/2},
\]
or equivalently,
\[
\de_{e'} u \geq 0\quad\textrm{in}\quad B_{r/2},
\]
for all $e'\in \Sp^{n-1}$ such that $e'\cdot e \geq \frac{\ell}{\sqrt{1+\ell^2}}$. This implies that $\de\{u>0\}$ is Lipschitz in $B_r$, with Lipschitz constant smaller than $\ell$.
\end{proof}

Finally, combining Proposition~\ref{prop.fblip} with the boundary regularity result in Theorem~\ref{thm.bdharnack2} we show that the free boundary is $C^{1,\alpha}$ around regular points.

\begin{prop}
\label{prop.C1sigma}
Let $L$ be an operator of the form \eqref{eq.L}-\eqref{eq.L.cond}, and let $b\in\R^n$. Let $u$ be a solution to \eqref{eq.pb}-\eqref{eq.pb2}-\eqref{eq.pb3}. Assume that $x_0$ is a regular free boundary point.

Then, there exists $r > 0$ such that the free boundary is $C^{1,\alpha}$ in $B_r(x_0)$ for some $\alpha > 0$ depending only on $n$, $\|b\|$, and the ellipticity constants.
\end{prop}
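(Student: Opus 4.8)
The plan is to upgrade the Lipschitz regularity from Proposition~\ref{prop.fblip} to $C^{1,\alpha}$ by an iteration argument, using Theorem~\ref{thm.bdharnack2} applied to directional derivatives of $u$. After translating so that $x_0 = 0$ and rescaling so that the hypotheses \eqref{eq.pb}--\eqref{eq.pb3} hold, I would first invoke Proposition~\ref{prop.fblip} with a small Lipschitz constant $\ell$ to obtain $r_0 > 0$ and a direction $e \in \Sp^{n-1}$ such that $\{u > 0\} \cap B_{r_0}$ is the supergraph of a Lipschitz function $g$ with $\|g\|_{\rm Lip} \le \ell$, and moreover $\partial_{e'} u \ge 0$ in $B_{r_0}$ for every $e'$ in the cone $\{e' \cdot e \ge \ell/\sqrt{1+\ell^2}\}$.

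The heart of the argument is then an application of Theorem~\ref{thm.bdharnack2} to the pair $u_1 = \partial_{e'} u$ and $u_2 = \partial_e u$ (after subtracting the obstacle we have $u_i$ satisfying $(-L+b\cdot\nabla)u_i = \partial_{e'} f$, resp.\ $\partial_e f$, in $\{u > 0\}\cap B_1$, with $u_i = 0$ on the contact set, which is a Lipschitz set of small constant). Since $\|f\|_{C^1}\le 1$ and we may rescale to make the $L^\infty$ norms of the right-hand sides as small as we wish (the relevant quantity being $r^{1+\tilde\gamma}$-type scaling, or more simply $r$ times a fixed bound), we can ensure $\|g_i\|_{L^\infty} \le \delta$ with $\delta$ the threshold of Theorem~\ref{thm.bdharnack2}. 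That theorem then gives
\[
\left\| \frac{\partial_{e'} u}{\partial_e u} \right\|_{C^\sigma(\{u>0\}\cap B_{r_0/2})} \le C
\]
for all admissible directions $e'$. Quotients of directional derivatives controlling the normal $\nu = \nabla u / |\nabla u|$ in $C^\sigma$ up to the boundary is precisely the statement that the free boundary $\{y_n = g(y')\}$ has $g \in C^{1,\sigma}$ near $0$: indeed $\partial_{e_i} g$ is, up to sign and the implicit function theorem, a boundary trace of the ratio $-\partial_{e_i} u / \partial_{e_n} u$ (in rotated coordinates), so a $C^\sigma$ bound on these ratios yields a $C^\sigma$ bound on $\nabla g$, hence $g\in C^{1,\sigma}$. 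One must run this for a basis of tangential directions $e_1,\dots,e_{n-1}$, each forming a small enough angle with $e_n$ after possibly shrinking $\ell$, and collect the estimates.

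The main obstacle I anticipate is verifying the hypotheses of Theorem~\ref{thm.bdharnack2} cleanly: one needs the contact set $E = \{u = 0\}$ to be a \emph{Lipschitz} set with Lipschitz constant controlled by a \emph{universal} quantity (so that the constants $\sigma, C$ in Theorem~\ref{thm.bdharnack2} are universal), and one needs the normalization $\int_{\R^n} u_i(y)(1+|y|^{n+1})^{-1}dy = 1$ together with the smallness $\|g_i\|_{L^\infty} \le \delta$ to hold \emph{simultaneously}. This forces a careful two-parameter rescaling: first fix $\ell$ small (universal) via Proposition~\ref{prop.fblip} to pin down the Lipschitz geometry, then choose the rescaling radius $r$ small enough that the rescaled right-hand sides $r^{\,}\,\partial_{e'}f$ (after the normalization that also rescales the $u_i$) fall below $\delta$; one must check that renormalizing $u_i$ to have unit weighted integral does not blow up the right-hand side, which follows from the non-degeneracy lower bound $\|u\|_{L^\infty(B_r)} \gtrsim \|\nabla u\|_{L^\infty(B_r)} r \gtrsim r^{2-\varepsilon}$ built into the regular-point condition and Lemma~\ref{lem.regpt}. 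Another minor technical point is that $\partial_e u$ must be strictly positive inside $\{u>0\}\cap B_{r_0/2}$ so the quotient is well-defined; this follows from the strong maximum principle since $\partial_e u \ge 0$ solves a linear equation there and is not identically zero. Once these bookkeeping issues are settled, the conclusion $g\in C^{1,\alpha}$ with $\alpha = \sigma$ is immediate, and $\alpha$ depends only on $n$, $\|b\|$, and the ellipticity constants as claimed.
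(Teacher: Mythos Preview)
Your overall strategy---Lipschitz regularity from Proposition~\ref{prop.fblip}, then Theorem~\ref{thm.bdharnack2} applied to pairs of directional derivatives to get $C^\sigma$ control on $\partial_{e'}u/\partial_e u$, then the standard level-set argument---is exactly the route the paper takes. The final step (writing the components of $\nu=\nabla u/|\nabla u|$ on level sets $\{u=t\}$ in terms of these ratios and letting $t\downarrow 0$) is identical.

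There is, however, one hypothesis of Theorem~\ref{thm.bdharnack2} that you do not address and that does not follow from the bookkeeping you list: the requirement $u_i\ge 0$ \emph{in all of $\R^n$}. Proposition~\ref{prop.fblip} only gives $\partial_{e'}u\ge 0$ in the small ball $B_{r_0}$; outside, $\partial_{e'}u$ may well change sign, so you cannot feed $\partial_{e'}u$ directly into Theorem~\ref{thm.bdharnack2}. The paper's fix is to work not with $u$ but with the blow-up $v$ from Proposition~\ref{prop.regpt}, and to replace $\nabla v\cdot e'$ by its global positive part $w=(\nabla v\cdot e')_+$. The point is that, by the closeness to the one-dimensional profile in $B_{R_0}$ and the growth control $|\nabla v|\le 2(1+|x|^{1-\varepsilon})$, the negative part $(\nabla v\cdot e')_-$ is at most $\delta$ in $B_{R_0}$ and sub-linear outside, so the extra contribution $L\big((\nabla v\cdot e')_-\big)$ it introduces into the equation for $w$ is uniformly small; moreover $(\nabla v\cdot e')_-\equiv 0$ in $B_{1/2}$, so $w$ agrees with $\nabla v\cdot e'$ there. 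This simultaneously secures the global nonnegativity and the smallness of the right-hand side needed for Theorem~\ref{thm.bdharnack2}. Once you make this modification, the rest of your argument goes through verbatim.
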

\begin{proof}
Without loss of generality assume $x_0 = 0$ and that $\nu(0) = e_n$, where $\nu(0)$ denotes the normal vector to the free boundary at 0 pointing towards $\{u > 0\}$.

By Proposition~\ref{prop.fblip}, we already know the free boundary is Lipschitz around 0, with Lipschitz constant 1 in a ball $B_\rho$. Let $v_1 = \frac{1}{\sqrt{2}}\left(\de_i u + \de_n u\right)$ for any fixed $i\in\{1,\dots,n-1\}$, and let $v_2 = \de_n u$. We first show that for some $r > 0$ and $\alpha > 0$,
\begin{equation}
\label{eq.harnv1v2}
\left\|\frac{v_1}{v_2}\right\|_{C^\alpha\left(\{u > 0\}\cap B_r \right)} = \frac{1}{\sqrt{2}}\left\|1+\frac{\de_i u }{\de_n u}\right\|_{C^\alpha\left(\{u > 0\}\cap B_r \right)} \leq C.
\end{equation}

Define $w$ as in the proof of Proposition~\ref{prop.fblip}, i.e., $w = C_1 (\nabla v\cdot e')\chi_{B_2}$, where $v$ is the rescaling given by Proposition~\ref{prop.regpt}, and $e'$ is such that $e'\cdot e \geq \frac{\ell}{2}$ (choose $\ell = 1$ for example).

From the proof of Proposition~\ref{prop.fblip} we know that $w \geq 0 $ in $B_{1/2}$ (if, using the same notation, $R_0$ is large enough and $\delta$ is small enough; i.e., the rescaling defining $v$ is appropriately chosen). Now define
\[
\tilde{w} = C_1(\nabla v\cdot e')_+
\]
and notice that
\[
\big|(-L+b\cdot \nabla)\tilde{w}\big| \leq \eta  \quad \textrm{in}\quad B_{1/4}\setminus \{v = 0\}
\]
for some $\eta > 0$ that can be made arbitrarily small by choosing the appropriate (small) $\delta > 0$ and (large) $R_0$ in the rescaling given by Proposition~\ref{prop.regpt}. The previous inequality follows from the fact that $(\nabla v \cdot e')_- \leq \delta$ in $B_{R_0}$, $(\nabla v \cdot e')_- \leq 2 \left(1+ |x|^{1-\varepsilon}\right)$ in $B_{R_0^c}$, and $ (\nabla v \cdot e')_-\equiv 0$ in $B_{1/2}$.

Let $e_{in} := \frac{1}{\sqrt{2}}\left(e_i+e_n\right)$, and define $w_1= C\left(\nabla v\cdot e_{in}\right)_+$ and $w_2 = C(\nabla v\cdot e_n)_+$ (taking $e' = e_{in}$ and $e' = e_n$). Now notice that $w_1$ and $w_2$ fulfil the hypotheses of the boundary regularity result in Theorem~\ref{thm.bdharnack2}, and $w_1 = C(\nabla v\cdot e_{in})$ and $w_2 = C(\nabla v\cdot e_{n})$ in $B_{1/2}$. Thus, applying Theorem~\ref{thm.bdharnack2} to $w_1$ and $w_2$ we obtain that there exists some $\alpha > 0$ such that
\[
\left\|\frac{w_1}{w_2}\right\|_{C^\alpha(\{v > 0\} \cap B_{1/8})} \leq C.
\]
Going back to the rescalings defining $\tilde w$ we reach that for some $r> 0$, \eqref{eq.harnv1v2} holds.

Once we have \eqref{eq.harnv1v2} the procedure is standard. Notice that the components of the normal vector to the level sets $\{ u = t\}$ for $t > 0$ can be written as
\[
\nu^i(x) = \frac{\de_i u}{|\nabla u|}(x) = \frac{\de_i u/\de_n u}{\left(\sum_{j = 1}^{n-1} \left(\de_j u/\de_n u\right)^2 + 1\right)^{1/2}},
\]
\[
\nu^n(x) = \frac{\de_n u}{|\nabla u|}(x) = \frac{1}{\left(\sum_{j = 1}^{n-1} \left(\de_j u/\de_n u\right)^2 + 1\right)^{1/2}},
\]
for $u(x) = t>0$. In particular, from the regularity of $\de_i u / \de_n u$ given by \eqref{eq.harnv1v2}, we obtain $\nu$ is $C^\alpha$ on these level sets; that is, $|\nu(x) - \nu(y) | \leq C|x-y|^\alpha$ whenever $x, y \in \{u = t\}\cap B_r$. Now let $t\downarrow 0$ and we are done.
\end{proof}

\section{Estimates in $C^{1,\alpha}$ domains}
\label{sec.7}
Once we know that the free boundary is $C^{1,\alpha}$ around regular points, we need to find the expansion of the solution \eqref{eq.expansion} around such points. To do so, we establish fine boundary regularity estimates for solutions to elliptic problem with critical drift in arbitrary $C^{1,\alpha}$ domains. That is the aim of this section.

The main result of this section is the following, for the Dirichlet problem with the operator $-L+b\cdot\nabla$ in $C^{1,\alpha}$ domains. We will use it on the derivatives of the solution to the obstacle problem.

\begin{thm}
\label{thm.expansion_}
Let $L$ be an operator of the form \eqref{eq.L}-\eqref{eq.L.cond}, let $b\in \R^n$ and let $\Omega$ be a $C^{1,\alpha}$ domain.

Let $f\in L^\infty (\Omega\cap B_1)$, and suppose $u\in L^\infty(\R^n)$ satisfies
\begin{equation}
\left\{\begin{array}{rcll}
(-L+b\cdot\nabla )u& = & f & \quad \textrm{in}\quad \Omega\cap B_1\\
u& = &0 & \quad \textrm{in}\quad B_1\setminus \Omega. \\
\end{array}\right.
\end{equation}

Then, for each boundary point $x_0\in B_{1/2}\cap \de\Omega$, there exists a constant $Q$ with $|Q|\leq C\left(\|u\|_{L^\infty(\R^n)} + \|f\|_{L^\infty(\Omega\cap B_1)}\right)$ such that for all $x\in B_1$
\[
\left| u(x) - Q \big((x-x_0)\cdot \nu(x_0) \big)_+^{\tilde\gamma(x_0)} \right| \leq C\left(\|u\|_{L^\infty(\R^n)} + \|f\|_{L^\infty(\Omega\cap B_1)}\right)|x-x_0|^{\tilde\gamma(x_0)+\sigma},
\]
where $\sigma > 0$ and $\nu(x_0)$ is the normal unit vector to $\de\Omega$ at $x_0$ pointing towards the interior of $\Omega$, and $\tilde\gamma(x_0)$ is defined in \eqref{eq.tildegamma}. The constant $C$ depends only on $n$, $\alpha$, $\Omega$, the ellipticity constants, and $\|b\|$; and the constant $\sigma$ depends only on $n$, $\alpha$, the ellipticity constants, and $\|b\|$.
\end{thm}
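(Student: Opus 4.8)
The plan is to prove Theorem~\ref{thm.expansion_} by a blow-up and compactness argument, following the general strategy of \cite{RS16}, where the essential new ingredient is the one-dimensional solution $u_0(x)=C(x_+)^{\tilde\gamma}$ of Proposition~\ref{prop.1DL} (for the operator $L$, using the reduction to $\chi(\nu)(-\Delta)^{1/2}+(b\cdot\nu)\partial$ on the normal line as in Case~1 of the proof of Theorem~\ref{thm.clas}). First I would establish the basic barrier: for a $C^{1,\alpha}$ domain $\Omega$, one constructs sub- and supersolutions to $(-L+b\cdot\nabla)$ comparable to $\mathrm{dist}(x,\partial\Omega)^{\tilde\gamma(x_0)}$ near $x_0$. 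The key point is that in a \emph{flat} half-space $\{x\cdot\nu>0\}$ the function $(x\cdot\nu)_+^{\tilde\gamma(\nu)}$ is exactly annihilated by $-L+b\cdot\nabla$, so in a $C^{1,\alpha}$ domain one gets an $O(|x-x_0|^{\tilde\gamma+\alpha'})$-error after freezing the normal; perturbing $\tilde\gamma$ slightly up or down (Proposition~\ref{prop.1DL} says $\beta<\gamma(b)$ gives a supersolution, $\beta>\gamma(b)$ a subsolution, and the same monotonicity carries over through the $\chi$-rescaling) produces genuine barriers and hence the optimal growth bound $c\,d^{\tilde\gamma(x_0)}\le u \le C\,d^{\tilde\gamma(x_0)}$ together with a Hopf-type lower bound. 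Combined with the interior estimate Proposition~\ref{prop.intest}, this already yields $[u/d^{\tilde\gamma(x_0)}]$ is well controlled.

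Next I would run the blow-up. Fix $x_0\in B_{1/2}\cap\partial\Omega$; by subtracting, we may assume $x_0=0$, $\nu(0)=e_n$. Suppose by contradiction the expansion fails; then, for the optimal $\sigma$ coming from the compactness, one can find scales $r_k\downarrow 0$ and constants $Q_k$ (the best approximating coefficients at scale $r_k$, obtained by projecting $u$ onto multiples of $(x_n)_+^{\tilde\gamma}$) such that the normalized functions
\[
v_k(x)=\frac{u(r_k x)-Q_k\,r_k^{\tilde\gamma(0)}(x_n)_+^{\tilde\gamma(0)}}{r_k^{\tilde\gamma(0)+\sigma}\,\rho_k},\qquad \rho_k:=\sup_{r\ge r_k}\frac{\|u-Q(r)\,(x_n)_+^{\tilde\gamma(0)}\|_{L^\infty(B_r)}}{r^{\tilde\gamma(0)+\sigma}},
\]
satisfy $\|v_k\|_{L^\infty(B_1)}\simeq 1$, a growth control $\|v_k\|_{L^\infty(B_R)}\le CR^{\tilde\gamma(0)+\sigma}$ for $R\ge1$ (from the definition of $\rho_k$ and monotonicity of the sup), an orthogonality/minimality condition killing the $(x_n)_+^{\tilde\gamma(0)}$-component at every scale, and a right-hand side $(-L+b\cdot\nabla)v_k\to 0$ because $f$ is bounded and the domain $\tfrac1{r_k}\Omega$ flattens to the half-space $\{x_n>0\}$ (this is where the $C^{1,\alpha}$ regularity of $\partial\Omega$ enters, turning the boundary error into $o(1)$ after rescaling). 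By the interior estimate Proposition~\ref{prop.intest} together with the barrier-based boundary control, $v_k$ are uniformly $C^{1-\varepsilon}$ up to $\{x_n=0\}$ on compact sets, so along a subsequence $v_k\to v$ locally uniformly, where $v$ solves $(-L+b\cdot\nabla)v=0$ in $\{x_n>0\}$, $v=0$ in $\{x_n\le0\}$, has the growth $|v(x)|\le C(1+|x|)^{\tilde\gamma(0)+\sigma}$, and inherits the orthogonality condition $\fint_{\partial B_r\cap\{x_n>0\}} v\,(x_n)^{\tilde\gamma(0)}=0$ for all $r$.

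The last step is a Liouville-type classification in the half-space: any $v$ as above with subquadratic-type growth of exponent $\tilde\gamma(0)+\sigma<\tilde\gamma(0)+1$ must be a multiple of $(x_n)_+^{\tilde\gamma(0)}$ — here one uses that the only homogeneous $(-L+b\cdot\nabla)$-harmonic functions in $\{x_n>0\}$ vanishing on the boundary, with degree strictly between $\tilde\gamma(0)$ and $\tilde\gamma(0)+1$, are exactly the $(x_n)_+^{\tilde\gamma(0)}$-family (this is the half-space analogue of Proposition~\ref{prop.bdharnack}, and for the fractional Laplacian it is the classical classification of $1$-homogeneous-type solutions; for general $L$ it reduces, after the $\chi$-rescaling, to the one-dimensional statement of Proposition~\ref{prop.1DL} applied to directional derivatives, plus the boundary Harnack Theorem~\ref{thm.bdharnack}). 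The orthogonality then forces $v\equiv0$, contradicting $\|v\|_{L^\infty(B_1)}\simeq1$. Iterating the resulting ``improvement of flatness'' at dyadic scales and summing the geometric series gives the pointwise expansion with $Q=\lim_k Q_k$ and the stated bound on $|Q|$. \textbf{The main obstacle} I expect is the Liouville classification in the half-space for the general order-$1$ operator $L$: unlike the flat fractional Laplacian case there is no explicit Poisson kernel, so one must combine the boundary Harnack principle (Theorem~\ref{thm.bdharnack}) with the one-dimensional reduction — effectively showing that a $(-L+b\cdot\nabla)$-harmonic function in a half-space vanishing on the boundary with controlled growth is uniquely determined up to scalar by its homogeneity, and that the only admissible homogeneity in the relevant window is $\tilde\gamma(0)$; handling the error terms coming from the $C^{1,\alpha}$ (rather than $C^\infty$) boundary in the rescaling, and choosing $\sigma$ small enough relative to $\alpha$ so that $\tilde\gamma(0)+\sigma$ never crosses $\tilde\gamma(0)+1$ nor any other admissible exponent, is the delicate bookkeeping.
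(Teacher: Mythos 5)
Your overall strategy coincides with the paper's: barriers of the form $d^{\kappa}$ with $\kappa$ perturbed slightly below/above $\tilde\gamma(x_0)$ (Proposition~\ref{prop.supersolution} and Lemma~\ref{lem.subsolution}), a H\"older estimate up to the boundary, a blow-up with an orthogonality condition in the spirit of \cite{RS16} (the ``summing the geometric series'' step is exactly Lemma~\ref{lem.RS16}), and a half-space Liouville theorem as the crux. So the architecture is right. However, there is one genuine gap at precisely the point you flag as the main obstacle, and your proposed resolution of it does not work. The blow-up limit $v$ is $u$ minus its best $(x_n)_+^{\tilde\gamma}$-approximation, so it \emph{changes sign}; the boundary Harnack principle (Theorem~\ref{thm.bdharnack} / Proposition~\ref{prop.bdharnack}) and the uniqueness-up-to-constant argument built on it apply only to \emph{nonnegative} solutions, and Proposition~\ref{prop.1DL} by itself only exhibits one solution, it does not classify them. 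The paper needs a separate classification of sign-changing one-dimensional solutions with growth $O(|x|^{1-\varepsilon})$ (Proposition~\ref{prop.1DL_2}): one first shows $u/(x_+)^{\gamma(b)}\in C^\sigma$ near the origin via Theorem~\ref{thm.bdharnack2}, subtracts the leading term, derives decay of $v'$ at $0$ and at infinity from rescaled interior estimates applied to incremental quotients, and then slides the explicit barrier $A\big((x_+)^{\gamma(b)}+(x_+)^{\gamma(b)-1}\big)$ against $v'$ to force $v'\equiv 0$. In addition, the reduction of the half-space Liouville theorem to dimension one (Theorem~\ref{thm.liouv}) is itself nontrivial: it is done by iterating tangential incremental quotients, gaining $\sigma$ in the decay rate at each step until the quotient must vanish; ``applied to directional derivatives'' glosses over this.

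A second, more minor point specific to this problem (and absent from \cite{RS16}): the exponent $\tilde\gamma$ \emph{varies along the boundary} with the normal direction, so the barrier with frozen exponent $\tilde\gamma(x_0)\pm\epsilon$ is only a genuine super/subsolution if the oscillation of $\gamma(b\cdot\nu/\chi(\nu))$ over $\partial\Omega\cap B_1$ is small compared with the perturbation. The paper quantifies this through $\eta_\nu$ in \eqref{eq.etanu} and then deduces Theorem~\ref{thm.expansion_} from Proposition~\ref{prop.expansion} by restricting to small balls where that oscillation condition holds; your write-up should make this reduction explicit. Finally, the growth threshold for the Liouville theorem as actually proven is $R^{1-\varepsilon}$, so you need $\tilde\gamma(x_0)+\sigma<1$ (which forces $\Upsilon=\gamma_0(1+\alpha/4)<1$ in the paper), not merely $\tilde\gamma(x_0)+\sigma<\tilde\gamma(x_0)+1$ as you wrote.
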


To prove Theorem~\ref{thm.expansion_} we will need several ingredients.

\subsection{A supersolution and a subsolution}
In this section we denote
\[
d(x):= {\rm dist}(x, \R^n\setminus \Omega).
\]
We will also use the following.
\begin{defi}
\label{defi.rendist}
Given a $C^{1,\alpha}$ domain $\Omega$, we consider $\varrho$ a regularised distance function to $C^{1,\alpha}$; i.e., a function that satisfies
\[
\tilde{K}^{-1} d \leq \varrho \leq \tilde{K} d,
\]
\[
\|\varrho\|_{C^{1,\alpha}(\Omega)} \leq \tilde{K}\quad \textrm{and}\quad |D^2\varrho|\leq \tilde{K}d^{\alpha - 1},
\]
where the constant $\tilde{K}$ depends only on $\alpha$ and the domain $\Omega$.
\end{defi}

The existence of such regularised distance was discussed, for example, in \cite[Remark 2.2]{RS15}.

We next construct a supersolution, needed in our proof of Theorem~\ref{thm.expansion_}.

\begin{prop}[Supersolution]
\label{prop.supersolution}
Let $L$ be an operator of the form \eqref{eq.L}-\eqref{eq.L.cond}, and let $b\in \R^n$. Let $\Omega$ be a $C^{1,\alpha}$ domain for some $\alpha > 0$, and suppose $0\in \de\Omega$.

Let $\nu :\de\Omega\to \Sp^{n-1}$ be the outer normal vector at the points of the boundary of $\Omega$, let $\gamma$ be defined by \eqref{eq.gamma}, and $\chi$ by \eqref{eq.chi}. Let us also define
\[
\gamma_0 := \gamma\left(\frac{b\cdot \nu(0)}{\chi(\nu(0))}\right),
\]
and
\begin{equation}
\label{eq.etanu}
\eta_\nu := \inf \left\{\eta \geq 0 : \gamma\left(\frac{b\cdot \nu(x)}{\chi(\nu(x))}\right) \geq \gamma_0 - \eta\quad \forall x\in \de\Omega\cap B_1 \right\}.
\end{equation}

Let $\phi := \varrho^\kappa$ for a fixed $0<\kappa<\gamma_0-2\eta_\nu$, and where $\varrho$ is the regularised distance given by Definition~\ref{defi.rendist}. Then, there exist $\delta > 0$ and $\hat C>0$ such that
\begin{equation}
\left\{\begin{array}{rcll}
\hat C(-L+b\cdot\nabla )\phi & \geq& 1 & \quad \textrm{in}\quad B_{1/2}\cap \{x : 0<d(x) \leq \delta\}\\
\hat C\phi&\geq &1 & \quad \textrm{in}\quad B_{1/2}\cap \{x : d(x) \geq \delta\}. \\
\end{array}\right.
\end{equation}
The constants $\delta$ and $\hat C$ depend only on $n$, $\Omega$, $\kappa$, the ellipticity constants, and $\|b\|$.
\end{prop}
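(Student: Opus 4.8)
The plan is to show that $\phi = \varrho^\kappa$ is a supersolution in a thin inner neighborhood $\{0 < d \le \delta\}$ of the boundary, while the lower bound in the far region $\{d \ge \delta\}$ is immediate: there $\varrho \ge \tilde K^{-1}\delta$, hence $\phi \ge (\tilde K^{-1}\delta)^\kappa$, so any $\hat C$ with $\hat C(\tilde K^{-1}\delta)^\kappa \ge 1$ works. The heart of the matter is the estimate $(-L + b\cdot\nabla)\phi \ge c\, d^{\kappa - 1}$ near $\partial\Omega$, since $d^{\kappa-1} \to \infty$ as $d \to 0$ (as $\kappa < 1$), and then again we absorb the constant into $\hat C$.

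First I would establish the model computation: by Proposition~\ref{prop.1DL}, on the half-line the function $(x_+)^\kappa$ satisfies $\big((-\Delta)^{1/2} + (b\cdot e)\partial\big)(x_+)^\kappa = \kappa\big((b\cdot e)\sin(\kappa\pi) + \cos(\kappa\pi)\big)(x_+)^{\kappa-1}$, and by the rescaling argument giving $\chi(e)$ (as in \cite[Lemma 2.1]{RS14}, used in the proof of Theorem~\ref{thm.clas}), the half-space function $x\mapsto \big((x\cdot\nu)_-\big)^\kappa = \big(-(x\cdot\nu)\big)_+^\kappa$ satisfies $(-L + b\cdot\nabla)(x\cdot\nu)_+^\kappa = \chi(\nu)\,\kappa\,\sin(\kappa\pi)\big(\cot(\kappa\pi) + \tfrac{b\cdot\nu}{\chi(\nu)}\big)(x\cdot\nu)_+^{\kappa-1}$. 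The sign of the bracket is controlled because $\kappa < \gamma_0 - 2\eta_\nu < \gamma\big(\tfrac{b\cdot\nu(x)}{\chi(\nu(x))}\big)$ for every boundary point $x$ near $0$, and $\beta \mapsto$ (that bracket) vanishes exactly at $\beta = \gamma\big(\tfrac{b\cdot\nu(x)}{\chi(\nu(x))}\big)$ and is positive for smaller $\beta$ (this is the "supersolution/subsolution" remark at the end of Proposition~\ref{prop.1DL}); so there is a uniform $c_0 > 0$ with $(-L+b\cdot\nabla)\big((x-x_1)\cdot\nu(x_1)\big)_+^\kappa \ge c_0\,\big((x-x_1)\cdot\nu(x_1)\big)_+^{\kappa-1}$ for every $x_1 \in \partial\Omega\cap B_1$, with the half-space taken tangent at $x_1$.

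Next I would transfer this to $\phi = \varrho^\kappa$ by freezing coefficients. Fix $x$ with $0 < d(x) \le \delta$, let $x_1 \in \partial\Omega$ be its nearest boundary point, and write $\ell(y) := \big((y - x_1)\cdot\nu(x_1)\big)_+$ for the tangent half-space profile. Since $\Omega$ is $C^{1,\alpha}$ and $\varrho$ is a regularised distance, $|\varrho(y) - \ell(y)| \le C|y - x_1|^{1+\alpha}$ near $x_1$, and $|D^2\varrho| \le \tilde K d^{\alpha-1}$. I would split $(-L+b\cdot\nabla)(\varrho^\kappa - \ell^\kappa)(x)$ into the near-field ($|y-x| \lesssim d$) and far-field parts of the singular integral. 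In the near field, Taylor expansion plus the $C^{1,\alpha}$ control of $\varrho$ and the $D^2\varrho$ bound yield a contribution bounded by $C\,d^{\kappa - 1 + \alpha\theta}$ for some $\theta \in (0,1)$ — strictly lower order than $d^{\kappa-1}$. In the far field, the integrand is controlled by $\|\varrho^\kappa - \ell^\kappa\|$ on an annulus (away from the singularity the kernel is integrable) and again gives something $\le C d^{\kappa-1+\alpha\theta}$, using $\kappa < 1$ so that $\varrho^\kappa$ and $\ell^\kappa$ have matching growth. The drift term $b\cdot\nabla(\varrho^\kappa - \ell^\kappa)$ is $O(d^{\kappa-1+\alpha})$ by the $C^{1,\alpha}$ estimate on $\varrho$. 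Combining, $(-L+b\cdot\nabla)\phi(x) \ge c_0\,\ell(x)^{\kappa-1} - C d^{\kappa-1+\alpha\theta} \ge (c_0 \tilde K^{-(\kappa-1)} - C\delta^{\alpha\theta})\,d^{\kappa-1} \ge \tfrac{c_0}{2}\tilde K^{1-\kappa} d^{\kappa-1}$ for $\delta$ small, which is $\ge 1$ after multiplying by a suitable $\hat C$ (here I also use $d^{\kappa-1} \ge (\tfrac12)^{\kappa-1}$ is false since $d$ is small — rather $d^{\kappa-1}$ is large, so this is fine; if one instead wants it $\ge 1$ uniformly one absorbs into $\hat C$ using $d \le \delta \le$ const). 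Choosing $\hat C$ to also dominate $(\tilde K^{-1}\delta)^{-\kappa}$ handles the far region simultaneously.

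The main obstacle I expect is the far-field / matching bookkeeping in the frozen-coefficient estimate: one must be careful that although $\varrho^\kappa$ only grows like $d^\kappa$ with $\kappa < 1$, the comparison with the (globally defined) tangent profile $\ell^\kappa$ is only good locally — $\ell$ is negative on the far side and $\ell_+^\kappa$ grows linearly-ish at infinity — so the difference $\varrho^\kappa - \ell_+^\kappa$ is not small globally and must be cut off, with the resulting error terms shown to be genuinely lower order than $d^{\kappa-1}$ uniformly in the boundary point $x_1$. Uniformity of all constants over $x_1 \in \partial\Omega \cap B_1$, which is what makes $\eta_\nu$ and the strict inequality $\kappa < \gamma_0 - 2\eta_\nu$ enter, is the point that requires the most care.
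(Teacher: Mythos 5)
Your plan --- freeze coefficients near the boundary, compare $\phi = \varrho^\kappa$ with a one-dimensional model whose sign is controlled by Proposition~\ref{prop.1DL} and \cite[Lemma 2.1]{RS14}, and show the error is strictly lower order in $d$ --- is the paper's plan, and your far-region step and uniformity remarks are fine. But the comparison profile you use creates a genuine gap.

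You compare with $\ell(y) := \big((y-x_1)\cdot\nu(x_1)\big)_+$ and invoke $|\varrho - \ell| \le C|y-x_1|^{1+\alpha}$ and $b\cdot\nabla(\varrho^\kappa - \ell^\kappa) = O(d^{\kappa-1+\alpha})$. Both fail. The regularised distance of Definition~\ref{defi.rendist} satisfies only $\tilde K^{-1}d \le \varrho \le \tilde K d$; on $\de\Omega$ one has $\nabla\varrho(x_1) = |\nabla\varrho(x_1)|\,\nu(x_1)$ with $|\nabla\varrho(x_1)|\in[\tilde K^{-1},\tilde K]$ generically different from $1$, so the first-order Taylor expansion of $\varrho$ at $x_1$ has slope $|\nabla\varrho(x_1)|\nu(x_1)$, not $\nu(x_1)$. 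Consequently $\varrho - \ell$ is of order $|y-x_1|$, not $|y-x_1|^{1+\alpha}$: your near/far split for the nonlocal error collapses, and in the drift term the leading part of $\nabla(\varrho^\kappa - \ell^\kappa)(x)$ is $\kappa\big(|\nabla\varrho(x_1)|^\kappa - 1\big)\,d^{\kappa-1}\nu(x_1) + O(d^{\kappa-1+\alpha})$, which is of the \emph{same} order $d^{\kappa-1}$ as the model term and of a sign and size you do not control, so it can cancel the positivity you extracted from $\cot(\kappa\pi) + b\cdot\nu/\chi(\nu) > 0$. The paper avoids this by comparing with the linearisation of the regularised distance itself, $l_0(y) := \big(\varrho(x_0) + \nabla\varrho(x_0)\cdot(y-x_0)\big)_+$, centred at the \emph{interior} evaluation point $x_0$: then $|\varrho - l_0|(x_0+y) \le C|y|^{1+\alpha}$ is the genuine $C^{1,\alpha}$ estimate, and since $l_0$ and $\varrho$ share value and gradient at $x_0$, the drift acting on $\phi - l_0^\kappa$ vanishes exactly at $x_0$, leaving only the purely nonlocal error (which is then bounded by the three-regime estimate you sketch). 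The one-dimensional coefficient is then attached to $\hat\varrho_0 := \nabla\varrho(x_0)/|\nabla\varrho(x_0)|$ rather than $\nu(x_1)$, and its positivity still follows from $\kappa < \gamma_0 - 2\eta_\nu$ once one checks, as the paper does, that $\hat\varrho_0$ is uniformly close to a normal direction for $\delta$ small.
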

\begin{proof}
Pick any $x_0\in B_{1/2}\cap \{x: d(x) \leq \delta\}$, and define
\[
l_0 (x) = \big(\varrho(x_0) + \nabla \varrho (x_0)\cdot (x-x_0)\big)_+.
\]

Notice that, whenever $l_0 > 0$, if we define $\hat\varrho_0 := \frac{\nabla \varrho(x_0)}{|\nabla \varrho(x_0)|}$ and $z = \hat\varrho_0\cdot x$ then
\begin{align*}
(-L+b\cdot\nabla)l_0^\kappa(x) & = \left(\chi(\hat\varrho_0)(-\Delta)^{1/2} + \left(b \cdot \hat\varrho_0\right)\de\right)\big(|\nabla\varrho(x_0)|z + c_0\big)^{\kappa}_+ \\
& = |\varrho(x_0)| \chi(\hat\varrho_0) c\big(\kappa, b \cdot \hat\varrho_0/\chi(\hat\varrho_0)\big)\big(|\nabla\varrho(x_0)|z+c_0\big)^{\kappa-1}_+,
\end{align*}
where $c_0 = \varrho(x_0) -\nabla \varrho(x_0)\cdot x_0$, and $c(\kappa,  b \cdot\hat\varrho_0/\chi(\hat\varrho_0))$ is the constant arising from Proposition~\ref{prop.1DL}. We want to check that this constant is positive, which is equivalent to saying (again, from Proposition~\ref{prop.1DL}) that
\[
\kappa < \gamma\left(\frac{b\cdot \hat\varrho_0}{\chi(\hat\varrho_0)}\right).
\]

To see this, it is enough to check that
\[
\gamma_0 - 2\eta_\nu\leq  \gamma\left(\frac{b\cdot \hat\varrho_0}{\chi(\hat\varrho_0)}\right),
\]
which will be true for some small $\delta > 0$ and for any $x_0 \in B_{1/2}\cap \{x: d(x) \leq \delta\} $ if
\[
\lim_{\delta \downarrow 0} \inf_{\substack{y\in B_{1/2} \\ 0<d(y) \leq \delta}}\sup_{x\in \de\Omega\cap B_{3/4}} \frac{\nabla \varrho(y)}{|\nabla \varrho(y)|}\cdot \nu(x) = 1,
\]
i.e., $\nabla \varrho$ normalised is close to some unit normal vector to the boundary as $\delta$ goes to zero (notice that $\gamma$ and $\chi$ are continuous). But this is true since $\varrho$ is a $C^{1,\alpha}$ function, so in particular, its gradient is continuous, and the boundary is a level set of $\varrho$; i.e., $\nabla \varrho(y) = |\nabla \varrho (y)|\nu(y)$ for any $y$ on the boundary. It is important to remark that the modulus of continuity of $\nabla \varrho$ depends only on $\Omega$.

Now notice that
\begin{equation}
\label{eq.eqx0}
l_0 (x_0) = \varrho(x_0) \quad \quad \nabla l_0 (x_0) = \nabla \varrho (x_0).
\end{equation}
Let $\tilde{\varrho}$ be a $C^{1,\alpha}(\R^n)$ extension of $\varrho$ to the whole $\R^n$ with $\varrho \leq 0$ in $\R^n \setminus \Omega$. Then we have
\[
\big|\varrho(x_0) +\nabla \varrho(x_0)\cdot y - \tilde{\varrho}(x_0 + y)\big|\leq C|y|^{1+\alpha}.
\]
By using that $|a_+ - b_+| \leq |a-b|$ we find
\[
\big|l_0(x_0 + y) - \varrho(x_0 + y)\big|\leq C|y|^{1+\alpha}.
\]

Now, also using that $|a^t - b^t| \leq |a-b|(a^{t-1} + b^{t-1})$ for $a,b \geq 0$, $|a^t - b^t| \leq C|a-b|^t$, and saying $d_0 = d(x_0)$ we get
\begin{equation}
\label{eq.supereq}
|\phi - l_0^\kappa|(x_0+y) \leq \left\{\begin{array}{ll}
 Cd_0^{\kappa-1}|y|^{1+\alpha} & \quad \textrm{for}\quad y\in B_{d_0/(\tilde{K}+1)}\\
 C|y|^{(1+\alpha)\,\kappa}& \quad \textrm{for}\quad y\in B_{1}\setminus B_{d_0/(\tilde{K}+1)} \\
 C|y|^{\kappa} &\quad \textrm{for}\quad y\in \R^n\setminus B_{1}.\\
\end{array}\right.
\end{equation}
We have used here that, in $B_{d_0/(\tilde{K}+1)}$, $l_0^{\kappa-1} \leq Cd_0^{\kappa-1}$ and $\varrho^{\kappa-1} \leq Cd_0^{\kappa-1}$. Here, $\tilde{K}$ denotes the constant given in Definition~\ref{defi.rendist}.
Putting all together
\begin{align*}
(-L+& b\cdot\nabla)\phi(x_0)  =\\
&= (-L+ b\cdot\nabla)(\phi-l_0^\kappa)(x_0)+ (-L+b\cdot\nabla)l_0^\kappa(x_0)\\
& \geq L (l_0^\kappa-\phi)(x_0) + c(\kappa) d_0^{\kappa -1}\\
& = \int_{\Sp^{n-1}} \int_{0}^\infty \left( (l_0^\kappa-\phi)(x_0+ r\theta) + (l_0^\kappa-\phi)(x_0 - r\theta) \right)\frac{dr}{r^{2}}d\mu(\theta) + c(\kappa) d_0^{\kappa -1}\\
& \geq -C\left(\int_0^{d_0/(\tilde{K}+1)} \frac{d_0^{\kappa-1}r^{1+\alpha}}{r^2}dr + \int_{d_0/(\tilde K +1)}^{1} \frac{r^{(1+\alpha)\,\kappa}}{r^2}dr + \int_{1}^{\infty} \frac{r^{\kappa}}{r^2}dr \right) +c(\kappa) \rho^{\kappa -1}\\
& \geq -Cd_0^{\kappa - 1+\alpha}-Cd_0^{(1+\alpha)\,\kappa - 1}+c(\kappa) d_0^{\kappa -1}.
\end{align*}

Notice that the right-hand side tends to $+\infty$ as $\delta \downarrow 0$ independently of the $x_0$ chosen. Thus, we can choose $\delta$ small enough so that the right-hand side is greater than 1. Then, by choosing $\hat C \geq 1$ such that $\hat C \phi\geq 1$ in $B_{1/2}\cap \{x: d(x) > \delta\}$ we are done.
\end{proof}

We can similarly find a subsolution for the problem. It will be used in the next section.

\begin{lem}[Subsolution]
\label{lem.subsolution}
Let $L$ be an operator of the form \eqref{eq.L}-\eqref{eq.L.cond}, and let $b\in\R^n$. Let $\Omega$ be a $C^{1,\alpha}$ domain for some $\alpha > 0$, and suppose $0\in \de\Omega$.

Let $\nu :\de\Omega\to \Sp^{n-1}$ be the outer normal vector at the points of the boundary of $\Omega$, let $\gamma$ be defined by \eqref{eq.gamma}, and $\chi$ by \eqref{eq.chi}. Let us also define
\[
\gamma_0 := \gamma\left(\frac{b\cdot \nu(0)}{\chi(\nu(0))}\right),
\]
and
\begin{equation}
\label{eq.etanu_2}
\eta_\nu^{(2)} := \inf \left\{\eta \geq 0 : \gamma\left(\frac{b\cdot \nu(x)}{\chi(\nu(x))}\right) \leq \gamma_0 + \eta\quad \forall x\in \de\Omega\cap B_1 \right\}.
\end{equation}

Let $\phi := \varrho^{\kappa_2}$ for any fixed $1>\kappa_2>\gamma_0+2\eta_\nu^{(2)}$. Then, there exist $\delta > 0$ and $\hat C>0$ such that
\begin{equation}
\left\{\begin{array}{rcll}
(-L+b\cdot\nabla )\phi & \leq& -1 & \quad \textrm{in}\quad B_{1/2}\cap \{x : 0<d(x) \leq \delta\}\\
\phi&\leq &\hat C & \quad \textrm{in}\quad B_{1/2}\cap \{x : d(x) > \delta\}. \\
\end{array}\right.
\end{equation}
The constants $\delta$ and $\hat C$ depend only on $n$, $\Omega$, $\kappa_2$, the ellipticity constants, and $\|b\|$.
\end{lem}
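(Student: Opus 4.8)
The plan is to mirror the proof of the supersolution in Proposition~\ref{prop.supersolution}, reversing the inequalities. Fix $x_0 \in B_{1/2}\cap\{x: 0<d(x)\le\delta\}$ and let $l_0(x) = \big(\varrho(x_0)+\nabla\varrho(x_0)\cdot(x-x_0)\big)_+$ be the same affine model. Writing $\hat\varrho_0 = \nabla\varrho(x_0)/|\nabla\varrho(x_0)|$, Proposition~\ref{prop.1DL} gives
\[
(-L+b\cdot\nabla)l_0^{\kappa_2}(x_0) = |\varrho(x_0)|\,\chi(\hat\varrho_0)\,c\big(\kappa_2, b\cdot\hat\varrho_0/\chi(\hat\varrho_0)\big)\,d_0^{\kappa_2-1}\cdot(\text{positive factor}),
\]
and now the point is that the one-dimensional constant $c(\kappa_2,\cdot)$ is strictly \emph{negative}: by Proposition~\ref{prop.1DL} this holds precisely when $\kappa_2 > \gamma(b\cdot\hat\varrho_0/\chi(\hat\varrho_0))$. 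So the first step is to verify, exactly as in the supersolution proof, that for $\delta$ small enough $\nabla\varrho(x_0)$ normalised is close to a unit normal $\nu(x)$ at a nearby boundary point, so that by continuity of $\gamma$ and $\chi$ and the definition \eqref{eq.etanu_2} of $\eta_\nu^{(2)}$ we get $\gamma(b\cdot\hat\varrho_0/\chi(\hat\varrho_0)) \le \gamma_0 + 2\eta_\nu^{(2)} < \kappa_2$. This gives $(-L+b\cdot\nabla)l_0^{\kappa_2}(x_0) \le -c(\kappa_2)\,d_0^{\kappa_2-1}$ for some $c(\kappa_2)>0$.

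The second step is the error estimate $|\phi - l_0^{\kappa_2}|(x_0+y)$, which is literally \eqref{eq.supereq} with $\kappa$ replaced by $\kappa_2$: it uses $|a_+-b_+|\le|a-b|$, the $C^{1,\alpha}$ bound on $\varrho$, and the elementary bounds $|a^t-b^t|\le|a-b|(a^{t-1}+b^{t-1})$ and $|a^t-b^t|\le C|a-b|^t$. Then one bounds the nonlocal term
\[
L(\phi - l_0^{\kappa_2})(x_0) = \int_{\Sp^{n-1}}\int_0^\infty \big((\phi-l_0^{\kappa_2})(x_0+r\theta) + (\phi-l_0^{\kappa_2})(x_0-r\theta)\big)\frac{dr}{r^2}\,d\mu(\theta)
\]
in absolute value by $C\big(d_0^{\kappa_2-1+\alpha} + d_0^{(1+\alpha)\kappa_2-1}\big)$, splitting the $r$-integral at $r = d_0/(\tilde K+1)$ and at $r=1$ exactly as before. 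Combining,
\[
(-L+b\cdot\nabla)\phi(x_0) = (-L+b\cdot\nabla)(\phi-l_0^{\kappa_2})(x_0) + (-L+b\cdot\nabla)l_0^{\kappa_2}(x_0) \le C\big(d_0^{\kappa_2-1+\alpha}+d_0^{(1+\alpha)\kappa_2-1}\big) - c(\kappa_2)d_0^{\kappa_2-1}.
\]
Since $d_0^{\kappa_2-1+\alpha} = o(d_0^{\kappa_2-1})$ and $d_0^{(1+\alpha)\kappa_2-1} = o(d_0^{\kappa_2-1})$ as $d_0\to 0$, the right-hand side is $\le -1$ once $\delta$ is small enough, uniformly in $x_0$. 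Finally, since $\phi = \varrho^{\kappa_2} \le \tilde K^{\kappa_2} d^{\kappa_2} \le \tilde K^{\kappa_2}$ on $B_{1/2}$, we may take $\hat C = \tilde K^{\kappa_2}$ (or any larger constant) for the region $\{d(x) > \delta\}$, and we are done.

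The main obstacle, as in the supersolution case, is the first step: one needs the strict sign $c(\kappa_2) > 0$ to hold \emph{uniformly} over all $x_0$ in the thin strip, and this is only possible because $\kappa_2$ was chosen with the slack $\kappa_2 > \gamma_0 + 2\eta_\nu^{(2)}$ — the factor $2$ absorbs the oscillation of $\gamma(b\cdot\nu(\cdot)/\chi(\nu(\cdot)))$ over $\de\Omega\cap B_1$ (one $\eta_\nu^{(2)}$) plus the error from replacing $\hat\varrho_0$ by a genuine boundary normal when $d_0$ is small but positive (a second $\eta_\nu^{(2)}$ worth, after shrinking $\delta$). The rest is the same bookkeeping of power-type error terms already carried out for the supersolution, with all inequalities flipped; no new idea is required beyond noting that Proposition~\ref{prop.1DL} makes $\varrho^{\kappa_2}$ a strict subsolution exactly when $\varrho^{\kappa}$ with $\kappa < \gamma_0 - 2\eta_\nu$ is a strict supersolution.
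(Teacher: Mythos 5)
Your proof is correct and follows the same approach as the paper, which simply refers back to the supersolution proof of Proposition~\ref{prop.supersolution} and observes that the one-dimensional constant $c(\kappa_2,\cdot)$ from Proposition~\ref{prop.1DL} switches sign once $\kappa_2 > \gamma(b\cdot\hat\varrho_0/\chi(\hat\varrho_0))$. You have filled in the details the paper leaves implicit — in particular the uniform negativity of that constant over the thin strip (using the slack $\kappa_2 > \gamma_0 + 2\eta_\nu^{(2)}$ and the continuity of $\nabla\varrho$), the error bounds on $(-L+b\cdot\nabla)(\phi - l_0^{\kappa_2})$, and the trivial bound $\phi \le \hat C$ away from the boundary — and the bookkeeping is sound.
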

\begin{proof}
The proof follows by the same steps as the proof of Proposition~\ref{prop.supersolution}. Using the same notation, one just needs to notice that when evaluating
\[
(-L+b\cdot\nabla)l_0^{\kappa_2}(x) = c\big(\kappa_2, b \cdot \hat\varrho_0/\chi(\hat\varrho_0)\big)\big(|\nabla\varrho(x_0)|z+c_0\big)^{\kappa_2-1}_+,
\]
now the constant $c(\kappa_2)$ is negative (independently of the $\kappa_2$ chosen, as before). Thus,
\[
(-L+ b\cdot\nabla)\phi(x_0)\leq Cd_0^{\kappa_2 - 1+\alpha}+Cd_0^{(1+\alpha)\,\kappa_2 - 1}+c(\kappa) d_0^{\kappa_2 -1},
\]
for negative $c(\kappa_2)$, so that if $d_0$ is small enough we obtain the desired result.
\end{proof}

\subsection{Hölder continuity up to the boundary in $C^{1,\alpha}$ domains}

The aim of this subsection is to prove Proposition~\ref{prop.boundestimatesg} below. Before doing that, let us introduce a definition.

\begin{defi}
\label{defi.split}
We say that $\Gamma\subset\R^n$ is a $C^{1,\alpha}$ graph splitting $B_1$ into $U^+$ and $U^-$ if there exists some $f_\Gamma\in C^{1,\alpha}(\R^{n-1})$ such that
\begin{enumerate}[$\bullet~~$]
\item $\Gamma := \{(x', f_\gamma(x'))\cap B_1 \textrm{ for } x'\in \R^{n-1}\}$;
\item $U^+ := \{(x',x_n)\in B_1 : x_n > f_\Gamma(x')\}$;
\item $U^- := \{(x',x_n)\in B_1 : x_n < f_\Gamma(x')\}$.
\end{enumerate}
Under these circumstances, we refer to the $C^{1,\alpha}$ norm of $\Gamma$ as $\|f_\Gamma\|_{C^{1,\alpha}(D')}$, where $D':= \{x'\in \R^n : (x',f_\Gamma(x'))\in B_1\}$.
\end{defi}

\begin{prop}
\label{prop.boundestimatesg}
Let $L$ be an operator of the form \eqref{eq.L}-\eqref{eq.L.cond}, and let $b\in \R^n$. Let $\Gamma$ be a $C^{1,\alpha}$ graph splitting $B_1$ into $U^+$ and $U^-$, according to Definition~\ref{defi.split}, and suppose $0\in \Gamma$.

Let $f\in L^\infty (U^+)$, let $g\in C^\beta(\overline{U_-})$, and suppose $u\in C(\overline{B_1})$ satisfying the growth condition $|u(x) |\leq M (1+|x|)^\Upsilon$ in $\R^n$ for some $\Upsilon < 1$. Assume also that $u$ satisfies in the viscosity sense
\begin{equation}
\left\{\begin{array}{rcll}
(-L+b\cdot\nabla )u& = & f & \quad \textrm{in}\quad U^+\\
u& = &g & \quad \textrm{in}\quad U^-. \\
\end{array}\right.
\end{equation}

Then there exists some $\sigma > 0$ such that $u\in C^\sigma(\overline{B_{1/2}})$ with
\[
\|u\|_{C^\sigma(B_{1/2})} \leq C\big(\|u\|_{L^\infty(B_1)}  + \|g\|_{C^\beta(U^-)} + \|f\|_{L^\infty(U^+)} + M\big).
\]
The constants $C$ and $\sigma$ depend only on $n$, $\alpha$, the $C^{1,\alpha}$ norm of $\Gamma$, $\Upsilon$, the ellipticity constants, and $\|b\|$.
\end{prop}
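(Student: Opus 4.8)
The plan is to prove Proposition~\ref{prop.boundestimatesg} by a combination of a barrier argument to control the behaviour near the boundary $\Gamma$ and interior estimates to propagate regularity away from it, following the scheme used for similar boundary regularity results for nonlocal operators. First I would reduce to a normalised setting: by subtracting a harmonic-type extension of $g$ (or by first treating the case $g\equiv 0$ and then adding back the contribution of $g$, using that $g\in C^\beta$), and by rescaling, we may assume $\|u\|_{L^\infty(B_1)}+\|g\|_{C^\beta(U^-)}+\|f\|_{L^\infty(U^+)}+M\le 1$ and that $0\in\Gamma$ with $\nu(0)=e_n$. Since $\Gamma$ is $C^{1,\alpha}$ it can be flattened locally, so at small scales $U^+$ looks arbitrarily close to a half-space; this is exactly the setting in which the supersolution $\phi=\varrho^\kappa$ from Proposition~\ref{prop.supersolution} and the subsolution from Lemma~\ref{lem.subsolution} apply, with $\kappa$ chosen slightly below and $\kappa_2$ slightly above the relevant exponent (here any $\kappa\in(0,1)$ close to $1$ works, since we only need a H\"older bound, not the sharp exponent).

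The key quantitative step would be an \emph{oscillation decay at the boundary point}: I claim there is $\rho\in(0,1)$ and $\sigma>0$ such that for every $x_0\in\Gamma\cap B_{1/2}$ and every $r\le r_0$,
\[
\operatorname{osc}_{B_r(x_0)} u \le C r^\sigma\big(\|u\|_{L^\infty(B_1)}+\|g\|_{C^\beta(U^-)}+\|f\|_{L^\infty(U^+)}+M\big).
\]
To prove this one argues by iteration: using the supersolution $\hat C\phi$ from Proposition~\ref{prop.supersolution} as an upper barrier and (minus) the subsolution as a lower barrier, one shows that if $|u|\le 1$ in $B_1$ then $|u|\le \theta<1$ in $B_\rho(x_0)\cap U^+$ for a universal $\theta$, because $u$ vanishes (up to the $C^\beta$ size of $g$) on $U^-$ near $x_0$ and the barrier forces $u$ to be small in a neighbourhood of the boundary; the smallness of $f$ is absorbed into the right-hand side of the barrier inequality, and the tails of $u$ at infinity are controlled by the growth hypothesis $|u|\le M(1+|x|)^\Upsilon$ with $\Upsilon<1$ exactly as in the proof of Proposition~\ref{prop.supersolution} (the tail integral $\int_1^\infty r^{\Upsilon}r^{-2}\,dr$ converges). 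Rescaling $B_\rho(x_0)$ back to $B_1$ and iterating this dichotomy yields geometric decay of the oscillation, hence the boundary H\"older estimate with some exponent $\sigma>0$. Crucially one must track that the rescaled functions still satisfy the same hypotheses with the same constants — the growth condition and the $C^{1,\alpha}$ character of $\Gamma$ are scale-invariant (up to improving constants), and the $C^\beta$ norm of $g$ only decreases under rescaling — so the iteration is legitimate.

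Once the boundary estimate is in hand, the interior estimate finishes the job: for a point $x$ with $d=d(x)=\operatorname{dist}(x,\Gamma)$ small, rescale $B_{d/2}(x)$ to unit size, apply the interior estimate Proposition~\ref{prop.intest} (with $\varepsilon$ chosen so that $1-\varepsilon>\sigma$), and combine with the boundary oscillation bound $\operatorname{osc}_{B_{2d}(x_0)}u\le C(2d)^\sigma$ at the nearest boundary point $x_0$; a standard interpolation/summation over dyadic annuli then gives $|u(x)-u(y)|\le C|x-y|^\sigma$ for all $x,y\in B_{1/2}$, where one replaces $\sigma$ by $\min\{\sigma,1-\varepsilon,\beta\}$ if necessary. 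Adding back the contribution of $g$ (which is $C^\beta$ and extends to a global $C^\beta$ function, solving an equation with bounded right-hand side) completes the proof. The main obstacle I anticipate is the careful bookkeeping in the iteration: ensuring that the barrier argument of Proposition~\ref{prop.supersolution} can be run uniformly at \emph{every} boundary point $x_0\in\Gamma\cap B_{1/2}$ and at every dyadic scale with constants depending only on the allowed quantities, and correctly handling the nonlocal tails of the rescaled solutions — the growth exponent $\Upsilon<1$ is what makes the tail contributions summable, and this must be threaded through each step of the iteration without loss.
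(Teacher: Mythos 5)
Your overall architecture (barrier at the boundary, interior estimates, dyadic combination of the two) is the same as the paper's, and the final step matches: the paper also splits into the cases $r\geq R^2/2$ and $r\leq R^2/2$ and loses a factor of $2$ in the exponent. The genuine gap is in how you handle the boundary datum $g$, and both of the devices you propose for it run into trouble. First, the reduction to $g\equiv 0$ by ``subtracting an extension of $g$'' does not work here: the operator $-L+b\cdot\nabla$ has order $1$, so applying it to a merely $C^\beta$ extension of $g$ (with $\beta<1$) does not produce a bounded right-hand side --- the singular integral $\int |\delta \tilde g(x,y)|\,|y|^{-n-1}dy$ diverges near $y=0$, and $b\cdot\nabla\tilde g$ is not even defined pointwise. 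Second, if you instead keep $g$ and use the supersolution $\varrho^\kappa$ of Proposition~\ref{prop.supersolution} relative to the domain $U^+$ (after flattening), that barrier vanishes identically on $U^-$, whereas $u=g$ there; near $x_0$ this is harmless since $|g-g(x_0)|\leq Cr^\beta$, but on $U^-\setminus B_r(x_0)$ the function $g$ can be of size $\|g\|_{L^\infty}$ while your comparison function equals $g(x_0)+Cr^\beta$, so the hypothesis $\psi\geq u$ outside the comparison region fails and the maximum principle cannot be invoked.

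The paper's resolution of exactly this point is the one nontrivial idea your sketch is missing: at each boundary point $z$ and each scale $R$ one places a fixed convex $C^{1,\alpha}$ domain $P_R\subset U^-$ touching $\Gamma$ at $z$ from the $U^-$ side, and builds the supersolution of Proposition~\ref{prop.supersolution} relative to $\R^n\setminus P_R$. That barrier vanishes only on the small set $P_R$, is bounded below by a positive constant on $\R^n\setminus P_R^{(R\delta)}$ (hence dominates $g$ and the truncated tails everywhere far from $z$ once multiplied by $C_0+\|u\|_{L^\infty}$), and within $P_R^{(R\delta)}$ the datum $g$ deviates from $g(y_{P_R})$ by at most $\|g\|_{C^\beta}((1+\delta)R)^\beta$, which is absorbed into an additive constant. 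This yields in one shot the two-parameter bound $|\tilde u-g(z)|\leq C(R^\beta+(r/R)^\kappa)$ on $B_r(z)$, optimized by $r=R^2$; no improvement-of-oscillation iteration is needed, since the supersolution already carries the full decay $\phi\leq Cd^\kappa$. Your iteration scheme is not wrong in principle, but each step of it would still require a barrier that dominates $g$ on all of $U^-$, so it does not circumvent the issue --- you would need the same exterior touching construction (or an equivalent device) inside every step.
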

\begin{proof}
Let $\tilde{u} = u \chi_{B_1}$ so that $(-L+b\cdot \nabla)\tilde{u} = f + L (u\chi_{B_1^c}) =: \tilde{f}$ in $U^+\cap B_{3/4}$, and $\tilde{u} = g$ in $U^-$. Note that $\|\tilde{f}\|_{L^\infty(U^+ \cap B_{3/4})} \leq C(\|f\|_{L^\infty(U^+)} + M) =: C_0$ for some constant $C$ depending only on $n$, $\Upsilon$, and the ellipticity constants.

We begin by proving that for some small $\epsilon > 0$, and for some $C$, we have
\begin{equation}
\label{eq.utildg}
\|\tilde{u} - g(z)\|_{L^\infty(B_r(z))} \leq Cr^\epsilon\quad \textrm{for all}\quad r\in (0,1),\quad\textrm{and for all}\quad z\in \Gamma\cap B_{1/2},
\end{equation}
where $\epsilon> 0$ and $C$ depend only on $n$, $C_0$, $\|u\|_{L^\infty(B_1)}$, $\|g\|_{C^\beta(U^-)}$, the ellipticity constants, and $\|b\|$.

Let us define a $C^{1,\alpha}$ domain that will be used in this proof, analogous to a fixed ball if the surface $\Gamma$ was $C^{1,1}$.

Thus, we define $P$ as a fixed $C^{1,\alpha}$ bounded convex domain with diameter 1 that coincides with $\{x = (x_1,\dots,x_n)\in \R^n: x_n \geq |(x_1,\dots,x_{n-1})|^{1+\alpha} \}$ in $B_{1/2}$. Let $y_P$ be a fixed point inside the domain, which will be treated as the \emph{center}. Let us call $P_R$ the rescaled version of such domain with diameter $R$ and \emph{center} $y_{P_R}$, and let us define
\[
P_R^{(\delta)} := \{x\in \R^n : {\rm dist}(x, P_R) \leq\delta\}.
\]
As an abuse of notation we will also call $P_R$ any rotated and translated version that will be given by the context.

Note that, since $\Gamma$ is $C^{1,\alpha}$, there exists some $\rho_0 \in (0,1)$ depending on the $C^{1,\alpha}$ norm of $\Gamma$ such that any point $z \in \Gamma\cap B_{1/2}$ can be touched by some $P_{\rho_0}$ rotated and translated correspondingly and contained completely in $U^-$.

Let us now consider the supersolution given by Proposition~\ref{prop.supersolution} with respect to the domain $\R^n\setminus P$.

That is, there is some function $\phi_P$ such that, for some constants $\delta > 0$ and $C$ fixed,
\begin{equation}
\label{eq.phiP}
\left\{\begin{array}{rcll}
 (-L+b\cdot\nabla )\phi_P & \geq& 1 & \quad \textrm{in}\quad P^{(\delta)}\setminus P\\
\phi_P &\geq &1 & \quad \textrm{in}\quad \R^n \setminus P^{(\delta)} \\
\phi_P &= &0& \quad \textrm{in}\quad P \\
\phi_P &\leq &Cd^{\kappa}& \quad \textrm{in}\quad \R^n,\\
\end{array}\right.
\end{equation}
where $d = {\rm dist} (x, P)$ and $0<\kappa <\min\left\{\gamma\left(\frac{b'\cdot e}{\chi(e)}\right) : \|b'\| = \|b\|, e\in \Sp^{n-1}\right\}$ can also be fixed --- recall that $\gamma$ and $\chi$ are given by \eqref{eq.gamma}-\eqref{eq.chi}.

\begin{figure}
\centering
\includegraphics{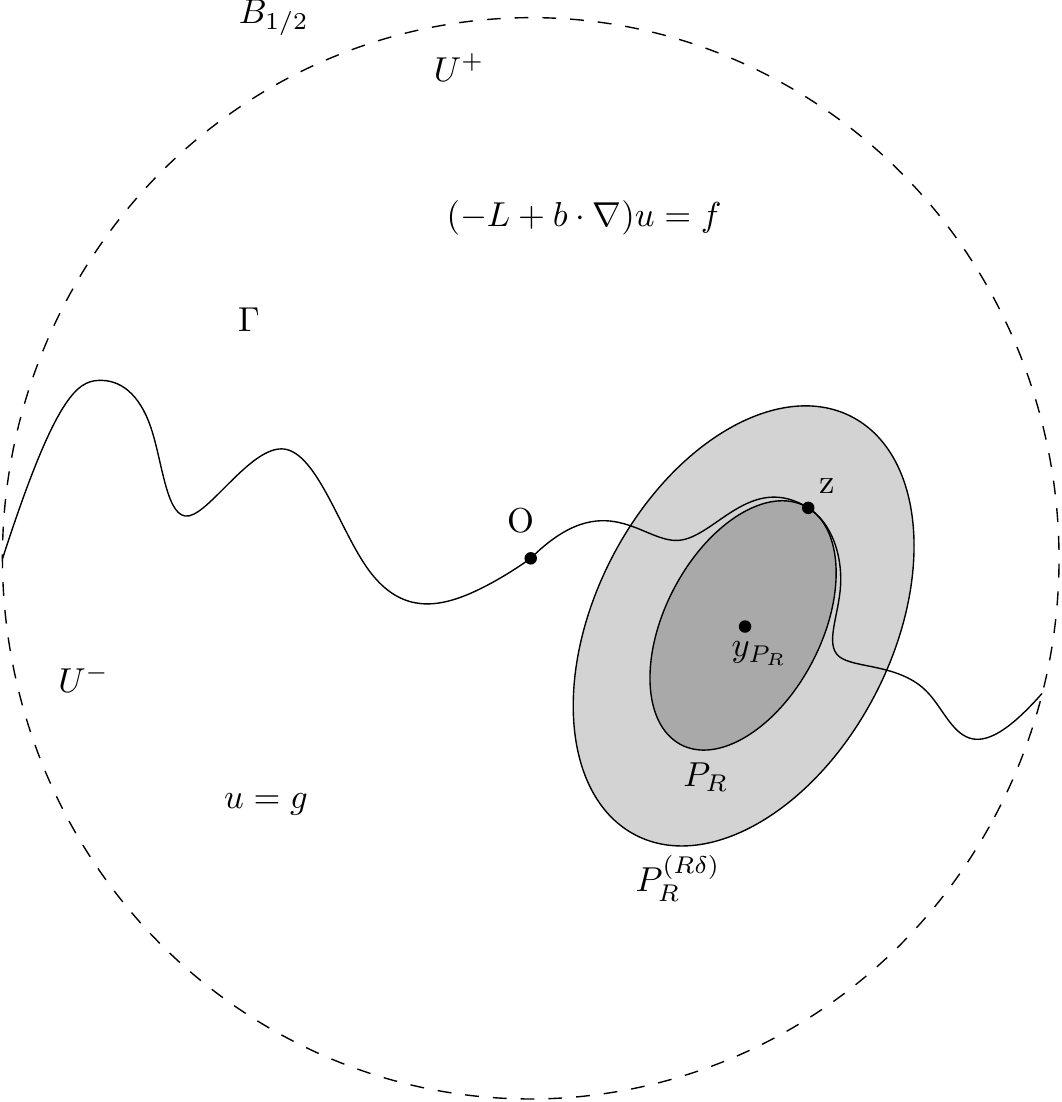}
\caption{}
\label{fig.draw}
\end{figure}

Let $P'$ be a rotated version of $P$, and let $\phi_{P'}$ be the corresponding rotated supersolution. Notice that we can assume that $\phi_{P'}$ also fulfils \eqref{eq.phiP} (with $P'$ instead of $P$), since while the operator $(-L+b\cdot \nabla)$ is not rotation invariant, only an extra positive constant arises depending on the ellipticity constants and $\|b\|$.

Given a rotated, scaled and translated version of the domain $P$, $P_R$, we will denote the corresponding supersolution (the rotated, scaled and translated version of $\phi_P$) by $\phi_{P_R}$.

Let now $z\in \Gamma\cap B_{1/2}$. For any $R\in (0,\rho_0)$ there exists some rescaled, rotated and translated domain $P_R\subset U^-$ touching $\Gamma$ at $z$. Recall that $y_{P_R}$ is the \emph{center} of the domain $P_R$, so that in particular $|z-y_{P_R}| = C_P R$ for some constant $C_P$ that only depends on the domain $P$ chosen ($C_P\in (0,1)$ because the domain $P_R$ has diameter $R$). See Figure~\ref{fig.draw} for a representation of this situation.

Recall that $\phi_{P_R}$ is the supersolution corresponding to the domain $P_R$, with the $\delta$ given by Proposition~\ref{prop.supersolution} (now, when rescaling, $\delta$ becomes $R\delta$). Define the function
\[
\psi(x) = g(y_{P_R}) + \|g\|_{C^\beta(U^-)} \big((1+\delta)R\big)^\beta + \big(C_0 + \|u\|_{L^\infty(B_1)}\big) \phi_{P_R}.
\]

Note that $\psi$ is above $\tilde{u}$ in $U^-\cap P^{(R\delta)}_R$, since $\tilde{u} = g$ there and the distance from $y_{P_R}$ to any other point in $P^{(R\delta)}_R$ is at most $(1+\delta)R$.

On the other hand, in $P^{(R\delta)}_R\setminus P_R$ we have $(-L+b\cdot \nabla)\psi \geq (C_0 + \|u\|_{L^\infty(B_1)}) R^{-1} \geq C_0 \geq (-L + b\cdot\nabla) \tilde{u}$ since $R\leq \rho_0 < 1$; and outside $P^{(R\delta)}_R$ we have $\tilde{u} \leq \psi$. In all, $\tilde u \leq \psi$ everywhere by the maximum principle, and thus for any $ R\in (0,\rho_0)$
\[
\tilde u (x) - g(z) \leq C\big(R^{\beta} + (r/R)^\kappa\big)\quad \textrm{for all} \quad x\in B_r(z)\quad\textrm{and for all}\quad r\in (0,R\delta),
\]
for some constant $C$ that depends only on $n$, $C_0$, $\|u\|_{L^\infty(B_1)}$, $\|g\|_{C^\beta(U^-)}$, the ellipticity constants, and $\|b\|$. If $R$ is small enough we can take $r = R^2$, and repeat this reasoning upside down to get that
\[
\|\tilde{u} - g(z)\|_{L^\infty(B_r(z))} \leq C\left(r^{\beta/2} + r^{\kappa/2}\right) \leq Cr^\epsilon\quad\textrm{for all } \quad r\in (0,\delta^{2}),
\]
for $\epsilon = \min\left\{\frac{\beta}{2},\frac{\kappa}{2}\right\}$.
This yields the result \eqref{eq.utildg} by taking a larger $C$ if necessary.

Now let $x, y\in B_{1/2}$, and let $r = |x-y|$. We will show
\[
|u(x)-u(y)| \leq Cr^\sigma,
\]
for some $\sigma > 0$. If $x, y \in U^-$ we are done by the regularity of $g$. If $x\in U^+$, $y\in U^-$, we can take $z$ in the segment between $x$ and $y$, on the boundary $\Gamma$, and compare $x$ and $y$ to $z$, so that it is enough to consider $x, y\in U^+$.

Let $R = {\rm dist}(x, \Gamma) \geq {\rm dist}(y, \Gamma)$, and suppose $x_0,y_0\in \Gamma$ are such that ${\rm dist}(x, \Gamma) = {\rm dist}(x, x_0)$ and ${\rm dist}(y, \Gamma) = {\rm dist}(y, y_0)$. By interior estimates for the problem (see Proposition~\ref{prop.intest}),
\begin{equation}
\label{eq.intest2}
[u]_{C^\epsilon(B_{R/2}(x))} \leq CR^{-\epsilon}.
\end{equation}

Let $r < 1$, and let us separate two different cases
\begin{enumerate}[$~~~~\bullet~$]
\item Suppose $r \geq R^2/2$. Then, using \eqref{eq.utildg} and the regularity of $g$ we obtain
\begin{align*}
|u(x)-u(y)| & \leq |u(x)-u(x_0)|+|u(x_0)-u(y_0)|+|u(y_0)-u(y)|\\
& \leq CR^{\epsilon} + C(2R+r)^\beta \\
& \leq C(r^{\epsilon/2} + r^{\beta/2}) \leq Cr^{\epsilon/2}.
\end{align*}
\item Assume $r \leq R^2/2$, so that $y\in B_{R/2}(x)$. Thus, using \eqref{eq.intest2},
\[
|u(x)-u(y)|\leq CR^{-\epsilon} r^\epsilon \leq Cr^{\epsilon/2}.
\]
\end{enumerate}

In all, we have found $u\in C^{\sigma}(B_{1/2})$ for $\sigma = \epsilon/2$.
\end{proof}

\begin{rem}
When $U$ is $C^\infty$, the above Hölder estimate follows from the results in \cite{S94}, \cite{CD01}. We thank G. Grubb for pointing this out to us.
\end{rem}

\subsection{A Liouville theorem}
We next prove a Liouville-type theorem in the half-space for non-local operators with critical drift, that will be used to prove Theorem~\ref{thm.expansion_}.

\begin{thm}
\label{thm.liouv}
Let $L$ be an operator of the form \eqref{eq.L}-\eqref{eq.L.cond}, and let $b\in \R^n$. Let $u$ be any weak solution to
\begin{equation}
\left\{\begin{array}{rcll}
(-L+b\cdot\nabla )u & =& 0& \quad \textrm{in}\quad \R^n_+\\
u & = & 0 & \quad \textrm{in}\quad \R^n_-. \\
\end{array}\right.
\end{equation}

Assume also that for some $\varepsilon > 0$ and some constant $C$, $u$ satisfies
\[
\|u\|_{L^\infty(B_R)} \leq C R^{1-\varepsilon}\quad\textrm{for all}\quad R \geq 1.
\]

Then,
\begin{equation}
u(x) = C(x_n)_+^{\gamma(b_n/\chi)},
\end{equation}
for some $C > 0$, and where $b_n$ is the $n$-th component of $b$. The constant $\chi$ is defined by $\chi = \chi(e_n)$ where $\chi(e)$ is given by \eqref{eq.chi}, and $\gamma$ is given by \eqref{eq.gamma}.
\end{thm}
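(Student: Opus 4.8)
The plan is to first upgrade the \emph{a priori} regularity of $u$, then to reduce the problem to one space dimension by an incremental-quotients argument in the directions tangent to $\{x_n=0\}$, and finally to classify the one-dimensional solutions using the harmonic extension.

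\emph{Step 1: Regularity up to the boundary.} Since $\{x_n=0\}$ is trivially a $C^{1,\alpha}$ graph, Proposition~\ref{prop.boundestimatesg} applied with $f\equiv0$ and $g\equiv0$ gives $u\in C^\sigma(\overline{B_{1/2}})$ with $\|u\|_{C^\sigma(B_{1/2})}\le C\|u\|_{L^\infty(B_1)}$. The rescalings $u_R(x):=R^{-(1-\varepsilon)}u(Rx)$ solve the same equation (the operator is of order $1$, hence scale invariant, and so is the drift term) and inherit the bound $|u_R(x)|\le C(1+|x|)^{1-\varepsilon}$ for $R\ge1$; applying the previous estimate to them yields
\[
[u]_{C^\sigma(B_{R})}\le C\,R^{1-\varepsilon-\sigma}\qquad\text{for all }R\ge1.
\]

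\emph{Step 2: Reduction to one dimension.} Fix $e'\in\Sp^{n-1}$ with $e'\perp e_n$ and $|h|\le1$, and set $w_1:=u(\cdot+he')-u(\cdot)$. Since $x\mapsto x+he'$ preserves $\R^n_+$ and $\R^n_-$, the function $w_1$ solves the same homogeneous Dirichlet problem, and by Step 1 it has growth $\|w_1\|_{L^\infty(B_R)}\le C|h|^\sigma R^{1-\varepsilon-\sigma}$. Iterating, the $k$-th finite difference $w_k:=\Delta_{he'}^k u$ solves the same problem and has growth $\le C_k|h|^{k\sigma}R^{1-\varepsilon-k\sigma}$ for all $R\ge 1$ (using Step 1 at each stage, which applies as long as the current growth exponent is $<1$). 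Choosing $k$ with $1-\varepsilon-k\sigma<0$, the function $w_k$ is bounded and tends to $0$ at infinity; evaluating the equation at an interior extremum and using that $w_k\equiv0$ on the positive-measure set $\R^n_-$, so that $-Lw_k$ has a strict sign there, forces $w_k\equiv0$. Hence $\Delta_{he'}^k u\equiv0$ for all small $h$, so $u$ restricted to any line in direction $e'$ is a polynomial of degree $<k$; the sublinear growth forces it to be constant along such lines. Therefore $u=u_0(x_n)$ for some $u_0\in C^\sigma(\R)$ with $u_0\equiv0$ on $\R_-$ (so $u_0(0)=0$). By the dimension-reduction identity \cite[Lemma 2.1]{RS14}, $-Lu_0=\chi(-\Delta)^{1/2}u_0$ with $\chi=\chi(e_n)$, so $u_0$ solves $(-\Delta)^{1/2}u_0+\tfrac{b_n}{\chi}u_0'=0$ in $\R_+$, $u_0\equiv0$ in $\R_-$, with the growth $R^{1-\varepsilon}$.

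\emph{Step 3: One-dimensional classification.} Let $\beta:=b_n/\chi$ and let $U(x,y)$ be the harmonic extension of $u_0$ to $\R^2_+$, as in the proof of Proposition~\ref{prop.1DL}. Then $U$ is harmonic, grows at most like $r^{1-\varepsilon}$, is bounded near the origin, vanishes on $\{y=0,\,x<0\}$, and satisfies the oblique condition $(-r^{-1}\partial_\theta+\beta\,\partial_r)U=0$ on $\{y=0,\,x>0\}$ (in polar coordinates $x=r\cos\theta$, $y=r\sin\theta$). Separating variables, the homogeneous solutions are $U=r^\lambda\sin\!\big(\lambda(\pi-\theta)\big)$ with $\cos(\lambda\pi)+\beta\sin(\lambda\pi)=0$, i.e.\ $\lambda\in\gamma(\beta)+\Z$; among these, only $\lambda=\gamma(\beta)\in(0,1)$ is compatible both with boundedness at the vertex and with sublinear growth. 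Expanding $U$ into these separated modes — which can be justified via the Mellin transform in $r$, or via the conformal map $z\mapsto\sqrt z$ that unfolds the mixed boundary value problem into a half-plane problem with a single Robin-type condition — shows that $U$ is exactly a multiple of the mode $\lambda=\gamma(\beta)$, whence $u_0(t)=C\,(t_+)^{\gamma(b_n/\chi)}$, which is the claim.

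I expect Step 3 to be the main obstacle: carrying out the expansion of the harmonic function $U$ in the planar sector with one Dirichlet and one oblique-derivative condition is delicate, because the oblique condition is not a self-adjoint boundary condition when $r$ is frozen, and one must confirm that the sublinear growth together with boundedness at the vertex leaves only the mode $\lambda=\gamma(\beta)$, including for \emph{sign-changing} $u_0$. A secondary technical point is the bookkeeping of the growth exponents through the incremental-quotients iteration in Step 2 and checking at each stage that the hypotheses of Proposition~\ref{prop.boundestimatesg} are met (in particular that the relevant growth exponent stays below $1$; once it becomes nonpositive the function is bounded and the maximum-principle argument applies directly).
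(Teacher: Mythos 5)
Your Steps 1 and 2 reproduce the paper's argument for this theorem almost exactly: rescale, apply Proposition~\ref{prop.boundestimatesg} to get $[u]_{C^\sigma(B_R)}\leq CR^{1-\varepsilon-\sigma}$, iterate tangential incremental quotients until the growth exponent turns negative, conclude that $u$ is a polynomial of degree $<k$ along tangential lines and hence constant there, and then reduce to the one-dimensional equation $(-\Delta)^{1/2}u_0+(b_n/\chi)u_0'=0$ via \cite[Lemma 2.1]{RS14}. These steps are correct (your maximum-principle argument for $w_k\equiv 0$ is a harmless variant of the paper's ``let $R\to\infty$ in the seminorm bound'').

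The genuine gap is in Step 3, and it is exactly where you suspect. The one-dimensional classification for \emph{sign-changing} solutions with growth $R^{1-\varepsilon}$ is not a side remark: it is the technical core of the theorem, and in the paper it is isolated as Proposition~\ref{prop.1DL_2}, proved immediately before Theorem~\ref{thm.liouv} precisely for this purpose (so it cannot simply be cited as known). Your identification of the admissible separated modes $r^\lambda\sin(\lambda(\pi-\theta))$ with $\lambda\in\gamma(\beta)+\Z$ is correct, and it is true that boundedness at the vertex together with sublinear growth singles out $\lambda=\gamma(\beta)$; but the assertion that $U$ \emph{equals} a sum of such modes --- i.e.\ the completeness of the mode expansion for a harmonic function in a half-plane with Dirichlet data on one ray and an oblique (non-self-adjoint, non-variational) condition on the other, given only $L^\infty$ growth bounds and Hölder regularity up to the boundary --- is precisely the Liouville statement you are trying to prove, and neither of your two suggested justifications (Mellin transform, conformal unfolding by $z\mapsto\sqrt z$) is carried out. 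Each would require real work: regularity of $U$ up to the corner sufficient to make sense of the boundary operator, control of the Mellin transform on the relevant strip, or, in the conformal route, a half-plane Liouville theorem for the Robin-type quantity $W=-\partial_yV+\beta'\partial_xV$ after the growth rate has doubled to $\rho^{2-2\varepsilon}$. The paper avoids all of this by a purely real-variable argument in Proposition~\ref{prop.1DL_2}: first the boundary Harnack principle (Theorem~\ref{thm.bdharnack2}) applied to $u+M(x_+)^{\gamma(b)}$ gives $u/(x_+)^{\gamma(b)}\in C^\sigma([0,1])$, which identifies the leading coefficient $k$; subtracting $k(x_+)^{\gamma(b)}$ and deriving decay estimates for $v'$ at $0$ and at infinity via scaled interior estimates, one then slides the explicit two-term supersolution family $\psi_A(x)=A\bigl((x_+)^{\gamma(b)}+(x_+)^{\gamma(b)-1}\bigr)$ onto $v'$ and uses the strong maximum principle at a touching point to force $v'\equiv 0$. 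Until you either supply a complete proof of the mode expansion or substitute an argument of this type, the proof is incomplete at its decisive step.
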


Before proving the Liouville theorem, let us prove it in the 1-dimensional case.

Notice that from Proposition~\ref{prop.bdharnack} it already follows that any non-negative solution must be either $u\equiv 0$ or the one found in Proposition~\ref{prop.1DL}. Here, however, we need the same result for solutions that may change sign.

\begin{prop}
\label{prop.1DL_2}
Let $b \in \R$, and let $u\in C(\R)$ be a function satisfying
\[
(-\Delta)^{1/2} u + bu' = 0\quad \textrm{in}\quad \R_+,\quad \quad u \equiv 0\quad \textrm{in}\quad \R_-,
\]
and $|u(x)| \leq C (1+|x|^{1-\varepsilon})$ for some $\varepsilon>0$. Then,
\[
u (x) = C_0(x_+)^{\gamma(b)},
\]
where $\gamma$ is given by \eqref{eq.gamma}.
\end{prop}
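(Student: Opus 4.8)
The plan is to pass to the Poisson extension, flatten the oblique boundary condition by a conformal change of variables, and then close the argument with Schwarz reflection and Liouville's theorem for entire functions.

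First I would introduce the Poisson extension $\bar u(x,y)$ of $u$ to $\R^2_+=\{y>0\}$. Since $u\in C(\R)$ and $|u(x)|\le C(1+|x|^{1-\varepsilon})$ with $\varepsilon>0$, the Poisson integral converges, $\bar u$ is harmonic in $\R^2_+$, continuous up to $\{y=0\}$ with $\bar u(\cdot,0)=u$ (in particular $\bar u$ is bounded near the origin and $\bar u(0,0)=u(0)=0$), and $|\bar u(x,y)|\le C(1+|(x,y)|^{1-\varepsilon})$. Exactly as in the proof of Proposition~\ref{prop.1DL}, $-\de_y\bar u(\cdot,0)=(-\Delta)^{1/2}u$ and $\de_x\bar u(\cdot,0)=u'$ on $\R_+$, so the equation for $u$ becomes the oblique condition $b\,\de_x\bar u-\de_y\bar u=0$ on $\{x>0,\,y=0\}$, while $\bar u=0$ on $\{x<0,\,y=0\}$. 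Writing $z=e^{w}$, $w=s+i\theta$, the set $\R^2_+$ becomes the strip $\Sigma=\{0<\mathrm{Im}\,w<\pi\}$, and $v(w):=\bar u(e^{w})$ is harmonic in $\Sigma$, vanishes on $\{\theta=\pi\}$, and satisfies the \emph{constant-coefficient} oblique condition $b\,\de_s v-\de_\theta v=0$ on $\{\theta=0\}$ (this is the whole point of the substitution); moreover $|v(s,\theta)|\le C$ for $s\le0$ and $|v(s,\theta)|\le Ce^{(1-\varepsilon)s}$ for $s\ge0$.

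Next, since $\Sigma$ is simply connected I would write $v=\mathrm{Re}\,G$ with $G$ holomorphic and $G'=\de_s v-i\de_\theta v$. The boundary conditions then read: $(1+ib)G'$ is real on $\{\theta=0\}$ and $G'$ is purely imaginary on $\{\theta=\pi\}$. Setting $H:=(1+ib)G'$ and $\omega:=\frac{1+ib}{1-ib}=e^{2i\arctan b}$ (so $|\omega|=1$), Schwarz reflection across the two horizontal lines $\{\theta=0\}$ and $\{\theta=\pi\}$ extends $H$ to a single-valued entire function on $\C$ obeying the functional equation $H(w+2\pi i)=-\omega\,H(w)$. Since $e^{2\pi i\gamma(b)}=e^{i\pi}e^{2i\arctan b}=-\omega$, the function $\Psi(w):=e^{-\gamma(b)w}H(w)$ is entire and genuinely $2\pi i$-periodic, hence $\Psi(w)=\sum_{n\in\Z}a_n e^{nw}$. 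Interior and boundary gradient estimates for the harmonic $v$ with these homogeneous data give $|\nabla v(s+i\theta)|\le C\sup_{|s'-s|\le1}\sup_{0\le\theta'\le\pi}|v(s'+i\theta')|$ on $\overline\Sigma$ (near the corner $z=0$, i.e.\ $s\to-\infty$, $v$ is only bounded, which is all that is needed); transporting $|H|$ to the neighbouring strips by the reflection formula yields $\sup_{0\le\theta\le2\pi}|\Psi(s+i\theta)|\le Ce^{-\gamma(b)s}$ for $s\le-1$ and $\le Ce^{(1-\varepsilon-\gamma(b))s}$ for $s\ge1$. Estimating $|a_n|\le e^{-ns}\sup_\theta|\Psi(s+i\theta)|$ and letting $s\to-\infty$ kills all $a_n$ with $n\le-1$ (here $\gamma(b)<1$ is used), while letting $s\to+\infty$ kills all $a_n$ with $n\ge1$; thus $\Psi\equiv a_0$. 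Then $G'=\frac{a_0}{1+ib}e^{\gamma(b)w}$, so $v(s,\theta)=\mathrm{Re}\big(Ce^{\gamma(b)(s+i\theta)}\big)+\mathrm{Re}\,c$, and imposing $v\equiv0$ on $\{\theta=\pi\}$ forces $\mathrm{Re}\,c=0$ and $Ce^{i\gamma(b)\pi}\in i\R$, so $v(s,\theta)=\rho\,e^{\gamma(b)s}\sin\big(\gamma(b)(\pi-\theta)\big)$ for some $\rho\in\R$. Undoing the substitutions, $\bar u(r\cos\theta,r\sin\theta)=\rho\,r^{\gamma(b)}\sin(\gamma(b)(\pi-\theta))$, and evaluating at $\theta=0$ gives $u(x)=\rho\sin(\gamma(b)\pi)(x_+)^{\gamma(b)}$, i.e.\ the claim with $C_0=\rho\sin(\gamma(b)\pi)$.

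I expect the main obstacle to be the rigorous handling of the two ingredients the argument rests on: (i) that the Poisson extension has the stated regularity and growth and that the equation truly translates into the oblique boundary condition (handled exactly as in Proposition~\ref{prop.1DL}); and (ii) that Schwarz reflection across the two \emph{parallel} lines produces a single-valued entire $H$ with the quasi-periodicity $H(w+2\pi i)=-\omega H(w)$, together with the gradient bounds up to $\de\Sigma$ needed to control the growth of $\Psi$. The only genuinely delicate spot is the corner $z=0$; in the strip picture it is pushed to $s=-\infty$, where $v$ is merely bounded, and that boundedness is exactly enough both for the gradient estimate and for killing the Laurent coefficients, so no decay rate for $u$ at $0$ is required.
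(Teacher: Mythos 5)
Your argument is correct, and it takes a genuinely different route from the paper. The paper proves this proposition by purely "real--variable" nonlocal methods: it first uses a barrier plus the boundary Harnack estimate of Theorem~\ref{thm.bdharnack2} to show $u/(x_+)^{\gamma(b)}\in C^\sigma([0,1])$, subtracts the leading term $k(x_+)^{\gamma(b)}$, derives decay estimates for the derivative of the remainder $v$ by iterating the interior estimates of Proposition~\ref{prop.intest} on dyadic scales, and finally kills $v'$ by sliding the explicit supersolution $\psi_A=A\big((x_+)^{\gamma(b)}+(x_+)^{\gamma(b)-1}\big)$ and invoking the strong maximum principle. You instead exploit that for $s=\tfrac12$ in one variable the problem is exactly a constant-angle oblique-derivative (Riemann--Hilbert) problem for the harmonic extension, which the conformal map $z=e^w$ turns into a quasi-periodicity relation for the entire function $H=(1+ib)G'$; the computation $e^{2\pi i\gamma(b)}=-\omega$ is precisely where the exponent $\gamma(b)=\tfrac12+\tfrac1\pi\arctan b$ appears, and the Fourier/Liouville step then gives the full classification, not just the positive solution. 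Your approach is more explicit and self-contained (it avoids the boundary Harnack machinery of \cite{RS16b} entirely), at the price of being rigidly tied to $n=1$, $s=\tfrac12$ and the half-Laplacian --- which is exactly the setting of this proposition, since the paper reduces the general operators $L$ to this case via \cite[Lemma 2.1]{RS14}; the paper's method, by contrast, is the robust one that recurs throughout the rest of the argument. Two points you flag should indeed be written out: (i) to apply Schwarz reflection you need $G'$ continuous up to $\{\theta=0\}$ with the stated boundary behaviour, which follows from interior smoothness of $u$ on $(0,\infty)$ (bootstrap Proposition~\ref{prop.intest} on incremental quotients) together with up-to-the-boundary regularity of the Poisson extension where the data is smooth; and (ii) the bound $|H|\le C\sup|v|$ up to the oblique boundary line requires either the classical $C^{1}$ Schauder estimate for the constant-coefficient oblique problem in a half-disk or a short maximum-principle argument on the reflected extension --- both standard, neither a gap in the idea.
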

\begin{proof}
We first claim that
\begin{equation}
\label{eq.claimb}
 \left\|u/(x_+)^{\gamma(b)}\right\|_{C^{\sigma}([0,1])}\leq C
\end{equation}
for some $\sigma > 0$.

Indeed, let
\[
w = \chi_{[0,2]} u + \kappa \chi_{[3/2,2]},
\]
and recall that, for some $\hat C$,
\[
\|u\|_{L^\infty([0,R])} \leq \hat C R^{1-\varepsilon}.
\]

Notice that $w(0) = 0$, and that $w \leq C_0 (x)_+^{\gamma(b)}$ for $x \geq 1$, if $C_0$ is big enough depending only on $\kappa$ and $\hat C$. Choose $\kappa$ so that $(-\Delta)^{1/2} w \leq 0$ in $[0,1]$ so that by the maximum principle $ u = w \leq C_0 (x)_+^{\gamma(b)}$ in $[0,1]$. Doing the same for $-u$ we reach that
\[
|u| \leq C_0 (x)_+^{\gamma(b)}\quad \textrm{for} \quad x\in [0,1].
\]

Define now $\tilde u = u \chi_{(0,m)}+ M(x_+)^{\gamma(b)}$, where $M = M(m)$ is such that $\tilde u \geq 0$ in $(0,m)$. Notice that $\tilde u$ solves an equation of the form $(-\Delta)^{1/2} \tilde u + b\tilde u' = f_m(x)$ in $(0,1)$ for some bounded $f_m$ with $\|f_m\|_{L^\infty(0,1)} \downarrow 0$ as $m \to \infty$. We can now apply Theorem~\ref{thm.bdharnack2} with $\tilde u$ and $(x_+)^{\gamma(b)}$ to get that for some large enough $m$,
\[
\left\|\tilde u/(x_+)^{\gamma(b)}\right\|_{C^{\sigma}([0,1])}\leq C,
\]
for some $\sigma > 0$. Thus, we get \eqref{eq.claimb}.

Define $v = u - k(x_+)^{\gamma(b)}$, where $k = \lim_{x \downarrow 0} \frac{u(x)}{(x_+)^{\gamma(b)}}$. Then we have
\begin{equation}
\label{eq.vnearinf}
|v(x)| \leq C|x|^{1-\varepsilon}\quad \textrm{for} \quad x \geq 1,
\end{equation}
\begin{equation}
\label{eq.vnear0}
|v(x)| \leq C|x|^{\gamma(b) + \sigma}\quad \textrm{for} \quad x \in [0,2],
\end{equation}
and we can assume, without loss of generality, that $1-\varepsilon > \gamma(b)+\sigma$. Combining this with the interior estimates from Proposition~\ref{prop.intest} we obtain $v\in C^{\gamma(b) + \sigma}([0,1])$. Indeed, take $x, y \in [0,1]$, $x < y$. Let $r = y-x$ and $R = |y|$. Now separate two cases
\begin{enumerate}[$~~~\bullet~$]
\item If $2r \geq R$, by \eqref{eq.vnear0}
\begin{align*}
|v(x)-v(y)| & \leq |v(x)| + |v(y)| \leq C(|x|^{\gamma(b)+\sigma} + |y|^{\gamma(b)+\sigma})\\
& \leq C \big((R-r)^{\gamma(b)+\sigma}+R^{\gamma(b)+\sigma}\big) \leq C r^{\gamma(b)+\sigma}.
\end{align*}
\item If $2r < R$, then $x, y \in (y-R/2, y+R/2)$. By rescaling the estimates from Proposition~\ref{prop.intest} and using \eqref{eq.vnearinf}
\[
R^{\gamma(b)+\sigma}[v]_{C^{\gamma(b)+\sigma}\left(y-\frac{R}{2}, y+\frac{R}{2}\right)}  \leq C\left(\|v\|_{L^\infty\left(y-R, y+R\right) }+R^{1-\varepsilon}\right).
\]

Now, from \eqref{eq.vnear0}
\[
\|v\|_{L^\infty\left(y-R, y+R\right) }\leq CR^{\gamma(b)+\sigma},
\]
so that
\[
[v]_{C^{\gamma(b)+\sigma}\left(y-\frac{R}{2}, y+\frac{R}{2}\right)}  \leq C.
\]
\end{enumerate}
This implies
\[
\|v\|_{C^{\gamma(b)+\sigma}([0,1])} \leq C,
\]
as desired.

Now, we claim that using the interior estimates from Proposition~\ref{prop.intest} we obtain
\begin{equation}
\label{eq.vprime1}
|v'(x)|\leq C|x|^{-\varepsilon} \quad \textrm{for} \quad x\geq 1,
\end{equation}
and
\begin{equation}
\label{eq.vprime2}
|v'(x)| \leq C|x|^{\gamma(b) + \sigma- 1}  \quad \textrm{for} \quad x\in[0,1].
\end{equation}

Let us show that these last inequalities hold. The first one, \eqref{eq.vprime1}, follows using that $|v(x)| \leq C(1 + |x|^{1-\varepsilon})$, and that \eqref{eq.vnearinf}-\eqref{eq.vnear0} combined with the rescaled interior estimates in Proposition~\ref{prop.intest} yield
\begin{equation}
\label{eq.vprime0}
[v]_{C^{\gamma(b)+\sigma}(R, 2R)} \leq CR^{1-\varepsilon-\gamma(b)-\sigma}\quad\textrm{for}\quad R \geq 1.
\end{equation}

Indeed, take $0<\alpha < \gamma(b)+\sigma$, and any $h\in \R$ with $|h| \leq R/2$. Then by interior estimates applied to the incremental quotients,
\[
\left[\frac{v(x+h)-v(x)}{|h|^{\gamma(b)+\sigma}}\right]_{C^{1-\alpha}(R,2R)} \leq CR^{\alpha-\varepsilon-\gamma(b)-\sigma} \quad\textrm{for}\quad R \geq 1,
\]
with $C$ independent of the $h$ chosen. In particular, this yields
\[
[v']_{C^{\gamma(b)+\sigma-\alpha}(R,2R)} \leq CR^{\alpha-\varepsilon-\gamma(b)-\sigma}\quad\textrm{for}\quad R \geq 1.
\]
The inequality in \eqref{eq.vprime1} now follows comparing the value of $v'(2^k)$ for any $k\in \N$ with $v'(1)$ dyadically.

For the second inequality, \eqref{eq.vprime2}, we proceed similarly. Take $0<\alpha<  \gamma(b)+\sigma$, and for any $R > 0$ fixed take $|h|\leq R/2$ and notice that
\begin{equation}
\label{eq.vprime3}
\left[\frac{v(x+h)-v(x)}{|h|^{\gamma(b)+\sigma}}\right]_{C^{1-\alpha}(R,2R)} \leq CR^{\alpha-1}\quad\textrm{for}\quad 0<R<1,
\end{equation}
with $C$ independent of $h$. This follows from the interior estimates in Proposition~\ref{prop.intest} and the growth of $\frac{v(x+h)-v(x)}{|h|^{\gamma(b)+\sigma}}$ given by \eqref{eq.vprime0}. As before, this implies
\[
[v']_{C^{\gamma(b)+\sigma-\alpha}(R,2R)} \leq CR^{\alpha-1}\quad\textrm{for}\quad 0<R<1.
\]
Finally, the inequality \eqref{eq.vprime2} follows comparing the value of $v'(2^{-k})$ with $v'(1)$ dyadically. Thus, \eqref{eq.vprime1} and \eqref{eq.vprime2} are proved.

Define now the function
\[
\psi_A(x) = A\left((x_+)^{\gamma(b)} + (x_+)^{\gamma(b) - 1}\right),
\]
and notice that $\psi_A$ and $v'$ solve
\begin{equation}
\label{eq.psia1}
(-\Delta)^{1/2} \psi_A + b\psi_A' = 0\quad \textrm{in} \quad x> 0,
\end{equation}
\begin{equation}
\label{eq.psia2}
(-\Delta)^{1/2} v' + b(v')' = 0\quad \textrm{in} \quad x> 0.
\end{equation}

We have that $\psi_A > v'$ in $\{ x > 0\}$ for some large enough $A$, thanks to the growth of $v'$ in \eqref{eq.vprime1}-\eqref{eq.vprime2}. Choose the smallest nonnegative $A$ such that $\psi_A\geq v'$. Then, by the growth at zero and infinity of both $v'$ and $\psi_A$ they touch at some point in $(0,\infty)$. Moreover, if $A>0$, then we must have $\psi_A \not\equiv v'$.

Let $x_0 > 0$ be a point where $\psi_A(x_0) = v'(x_0)$. Notice that $\psi_A - v'$ is a non-negative (and non-zero) function with a minimum at $x_0$. Thus,
\[
\big((-\Delta)^{1/2} (\psi_A - v') + b(\psi_A - v')'\big)(x_0) = (-\Delta)^{1/2} (\psi_A - v')(x_0) < 0,
\]
which contradicts the fact that both $\psi_A$ and $v'$ are solutions to the problem, \eqref{eq.psia1}-\eqref{eq.psia2}. Thus, there is no positive $A$ such that $\psi_A$ and $v'$ touch at at least one point, so we must have $v' \leq 0$. Doing the same from below we reach $v' \geq 0$, and therefore $v'\equiv 0$. Hence, since $u(0) = 0$ we find $v \equiv 0$. In particular, this means that
\[
u = k(x_+)^{\gamma(b)},
\]
as desired.
\end{proof}

We can now prove the Liouville theorem.

\begin{proof}[Proof of Theorem~\ref{thm.liouv}]
Let us first see that the solution is 1-dimensional in the direction $e_n$.

Given $\rho \geq 1$, define
\[
v_\rho(x) = \rho^{-\varepsilon+1}u(\rho x).
\]

Notice that
\[
\|v_\rho\|_{L^\infty(B_R)} = \rho^{-\varepsilon+1}\|u(\rho\cdot)\|_{L^\infty(B_R)} = \rho^{-\varepsilon+1}\|u\|_{L^\infty(B_{\rho R})} \leq CR^{1-\varepsilon}.
\]
Moreover, by the homogeneity of $(-L+b\cdot\nabla)$,
\begin{equation}
\left\{\begin{array}{rcll}
(-L+b\cdot\nabla )v_\rho & =& 0& \quad \textrm{in}\quad \R^n_+\\
v_\rho & = & 0 & \quad \textrm{in}\quad \R^n_-. \\
\end{array}\right.
\end{equation}

Define now $\tilde{v}_\rho = v_{\rho}\chi_{B_2}$, so that $\tilde{v}_\rho \in L^\infty(\R^n)$. We now have
\begin{equation}
\left\{\begin{array}{rcll}
(-L+b\cdot\nabla )\tilde v_\rho & =& g_\rho& \quad \textrm{in}\quad B_1^+\\
\tilde v_\rho & = & 0 & \quad \textrm{in}\quad B_1^-, \\
\end{array}\right.
\end{equation}
for some $g_\rho$ with $\|g_\rho\|_{L^\infty(B_1^+)}\leq C_0$ with $C_0$ independent of $\rho$. Indeed,
\[
(-L+b\cdot\nabla )\tilde v_\rho = (-L+b\cdot\nabla )( v_\rho - v_\rho\chi_{B_2^c}) = L (v_\rho\chi_{B_2^c}) \leq C_0\quad \textrm{in}\quad B_1^+,
\]
where the last inequality follows thanks to the uniform growth control on $v_\rho$.

Now, by Proposition~\ref{prop.boundestimatesg},
\[
\|v_\rho\|_{C^\sigma(B_{1/2})} = \|\tilde{v}_\rho\|_{C^\sigma(B_{1/2})} \leq C,
\]
from which
\begin{equation}
\label{eq.123}
[u]_{C^\sigma(B_{\rho/2})} = \rho^{-\sigma} [u(\rho\cdot)]_{C^{\sigma}(B_{1/2})} = \rho^{-\sigma+1-\varepsilon} [v_\rho]_{C^\sigma(B_{1/2})} \leq C\rho^{-\sigma+1-\varepsilon}.
\end{equation}

Now, given $e\in \Sp^{n-1}$ with $e_n = 0$, and for any $h > 0$, define
\[
w (x) = \frac{u(x+e h)-u(x)}{h^\sigma}.
\]
By \eqref{eq.123},
\[
\|w\|_{L^\infty(B_R)} \leq CR^{-\sigma+1-\varepsilon}\quad\textrm{for all}\quad R \geq 1.
\]
We also have
\begin{equation}
\left\{\begin{array}{rcll}
(-L+b\cdot\nabla )w & =& 0& \quad \textrm{in}\quad \R^n_+\\
w & = & 0 & \quad \textrm{in}\quad \R^n_-, \\
\end{array}\right.
\end{equation}
thanks to the fact that $e$ does not have component in the $n$-th direction, $e_n = 0$.

Repeat the previous argument applied to $w$ instead of $u$, to get
\[
[w]_{C^\sigma(B_R)} \leq CR^{-2\sigma+1-\varepsilon}\quad \textrm{for all} \quad R\geq 1.
\]

Repeating iteratively we get that, for $m = \lfloor \frac{1-\varepsilon}{\sigma}+1  \rfloor$, then
\[
[w_m]_{C^\sigma(B_R)} \leq CR^{-m\sigma+1-\varepsilon}\quad \textrm{for all} \quad R\geq 1,
\]
where $w_m$ is an incremental quotient of order $m$ of $u$. Letting $R\to \infty$ we observe that $w_m \equiv 0$.

Since $w_m$ is any incremental quotient of order $m$, this means that for any fixed $x$, $q_x(y') := u(x+(y',0))$ for $y'\in \R^{n-1}$ is a polynomial of order $m-1$ in the $y'$ variables. However, from the growth condition on $u$ the polynomial must grow less than linearly at infinity, and therefore it is constant. This means that for any $x$, $u(x+eh) = u(x)$ for all $h \in \R$ and for all $e\in \Sp^{n-1}$ with $e_n= 0$; i.e., $u(x) = u(x_n)$, as we wanted to see.

Now we can proceed as in the proof of the classification theorem, Theorem~\ref{thm.clas}, and use the classification of 1-dimensional solutions from Proposition~\ref{prop.1DL_2}.
\end{proof}

\subsection{Proof of Theorem~\ref{thm.expansion_}}

We now prove the following result, which will directly yield Theorem~\ref{thm.expansion_}. For this, we combine the ideas in \cite{RS16} with Propositions~\ref{prop.boundestimatesg} and \ref{prop.1DL_2}.

\begin{prop}
\label{prop.expansion}
Let $L$ be an operator of the form \eqref{eq.L}-\eqref{eq.L.cond}, and let $b\in \R^n$. Let $\Gamma$ be a $C^{1,\alpha}$ graph splitting $B_1$ into $U^+$ and $U^-$ (see Definition~\ref{defi.split}), and suppose $0\in \Gamma$ and that $\nu(0) = e_n$, where $\nu(0)$ is the normal vector to $\Gamma$ at 0 pointing towards $U^+$.

Let $f\in L^\infty (U^+)$, and suppose $u\in L^\infty(\R^n)$ satisfies
\begin{equation}
\left\{\begin{array}{rcll}
(-L+b\cdot\nabla )u& = & f & \quad \textrm{in}\quad U^+\\
u& = &0 & \quad \textrm{in}\quad U^-. \\
\end{array}\right.
\end{equation}

Let us denote $\gamma :=  \gamma\left(\frac{b\cdot \nu(0)}{\chi(\nu(0))}\right) = \gamma(b_n/\chi(e_n))$ and $\chi = \chi(e_n)$ as defined in \eqref{eq.gamma}-\eqref{eq.chi}, and suppose that $\gamma\in \left[\gamma_0,\gamma_0\left(1+\frac{\alpha}{8}\right)\right]$ for some $\gamma_0 \in(0,1)$ such that $\gamma_0\left(1+\frac{\alpha}{4}\right)< 1$. Suppose also that $\eta_\nu$ as defined in \eqref{eq.etanu} satisfies $\eta_\nu \leq \frac{\alpha\gamma_0}{64}$, and let $\Upsilon = \gamma_0\left(1+\frac{\alpha}{4}\right)$.

Then, there exists $Q$ with $|Q|\leq C\left(\|u\|_{L^\infty(\R^n)} + \|f\|_{L^\infty(U^+)}\right)$ such that
\[
\big| u(x) - Q (x_n)_+^{\gamma} \big| \leq C\left(\|u\|_{L^\infty(\R^n)} + \|f\|_{L^\infty(U^+)}\right)|x|^\Upsilon
\quad \textrm{for all}\quad x \in B_1,
\]
where the constant $C$ depends only on $n$, $\alpha$, the $C^{1,\alpha}$ norm of $\Gamma$, $\gamma_0$, the ellipticity constants, and $\|b\|$.
\end{prop}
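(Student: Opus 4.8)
Throughout I work with the \emph{equivalent} statement in which the profile $(x_n)_+^\gamma$ is replaced by $\varrho^\gamma$, where $\varrho$ is the regularised distance of $U^+$ from Definition~\ref{defi.rendist}. Since $\nu(0)=e_n$ and $\varrho\in C^{1,\alpha}$ vanishes on $\Gamma$, one has $\big|\varrho(x)^\gamma-|\nabla\varrho(0)|^\gamma(x_n)_+^\gamma\big|\le C|x|^{\gamma(1+\alpha)}$, and since $\gamma(1+\alpha)\ge\gamma_0(1+\alpha)>\gamma_0(1+\tfrac\alpha4)=\Upsilon$, this difference is absorbed into the $O(|x|^\Upsilon)$ error once $|Q|\le C$ is known; so the two formulations are equivalent up to a harmless rescaling of $Q$. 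The reason to work with $\varrho^\gamma$ is that, by the computation in the proof of Proposition~\ref{prop.supersolution} together with the Lipschitz dependence of the constant $c(\kappa,\cdot)$ of Proposition~\ref{prop.1DL} on its second argument, $\varrho^\gamma$ is an \emph{approximate} solution of $(-L+b\cdot\nabla)w=0$ near $0$ in the whole domain $U^+$: at a point $x_0$ with $d_0:={\rm dist}(x_0,\R^n\setminus U^+)$ small, $\big|(-L+b\cdot\nabla)(\varrho^\gamma)(x_0)\big|\le C(\eta_\nu+|x_0|^\alpha)\,d_0^{\gamma-1}+Cd_0^{\gamma-1+\alpha}+Cd_0^{\gamma(1+\alpha)-1}$, and it vanishes on $\R^n\setminus U^+$. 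This is exactly where the hypotheses $\gamma\in[\gamma_0,\gamma_0(1+\tfrac\alpha8)]$ and $\eta_\nu\le\tfrac{\alpha\gamma_0}{64}$ will be used.

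First I would establish a preliminary decay: using the supersolution $\hat C\varrho^\kappa$ of Proposition~\ref{prop.supersolution} with $\kappa$ chosen in the nonempty interval $\big(\gamma_0(1-\tfrac\alpha{16}),\,\gamma_0-2\eta_\nu\big)$ and the maximum principle, one gets $|u(x)|\le C\big(\|u\|_{L^\infty(\R^n)}+\|f\|_{L^\infty(U^+)}\big)\,{\rm dist}(x,\R^n\setminus U^+)^\kappa$ in $B_{1/2}$, hence $\|u\|_{L^\infty(B_r)}\le Cr^\kappa$, together with the smallness of the gap $\gamma-\kappa$ (less than $\tfrac{3\alpha\gamma_0}{16}$ by the bound on $\gamma$). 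Then, for $r\in(0,1)$, I set $Q_r:=\big(\int_{B_r}\varrho^{2\gamma}\big)^{-1}\int_{B_r}u\,\varrho^\gamma$, $\phi(r):=\|u-Q_r\varrho^\gamma\|_{L^\infty(B_r)}$ and $\theta(r):=\sup_{\rho\ge r}\rho^{-\Upsilon}\phi(\rho)$. Since $\Upsilon>\gamma$, the standard comparison of consecutive dyadic coefficients gives $|Q_{2^{-j-1}}-Q_{2^{-j}}|\le C\,2^{-j(\Upsilon-\gamma)}\,\theta(2^{-j})$, so whenever $\theta(0^+)<\infty$ the limit $Q_\infty:=\lim_jQ_{2^{-j}}$ exists, with $|Q_\infty|\le C\big(\|u\|_{L^\infty(\R^n)}+\|f\|_{L^\infty(U^+)}\big)+C\theta(0^+)$ and $\|u-Q_\infty\varrho^\gamma\|_{L^\infty(B_r)}\le C\theta(0^+)\,r^\Upsilon$ for all $r$. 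Hence it suffices to prove $\theta(0^+)\le C\big(\|u\|_{L^\infty(\R^n)}+\|f\|_{L^\infty(U^+)}\big)$.

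I would prove this last bound by a blow-up/contradiction argument in the spirit of \cite{RS16}. If it fails, there are operators $L_k$ of the form \eqref{eq.L}--\eqref{eq.L.cond} with fixed ellipticity, $C^{1,\alpha}$ graphs $\Gamma_k$ with uniformly bounded norm and $\nu_k(0)=e_n$, and $u_k,f_k$ with $\|u_k\|_{L^\infty(\R^n)}+\|f_k\|_{L^\infty}=1$, for which $\theta_k(0^+)\to\infty$. Choosing $\rho_k\downarrow0$ with $\rho_k^{-\Upsilon}\phi_k(\rho_k)\ge\tfrac12\theta_k(\rho_k)\to\infty$, I rescale
\[
v_k(x):=\frac{u_k(\rho_k x)-Q_k(\rho_k)\rho_k^\gamma\,\tilde\varrho_k(x)^\gamma}{\phi_k(\rho_k)},\qquad \tilde\varrho_k(x):=\rho_k^{-1}\varrho_k(\rho_k x),
\]
$\tilde\varrho_k$ being the regularised distance of $\rho_k^{-1}U_k^+$. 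Then $\|v_k\|_{L^\infty(B_1)}=1$, $\int_{B_1}v_k\,\tilde\varrho_k^\gamma=0$, and the telescoping estimate above together with the global bound $\|u_k\|_{L^\infty}\le1$ gives $\|v_k\|_{L^\infty(B_R)}\le CR^\Upsilon$ for all $R\ge1$, with $\Upsilon<1$. The domains $\rho_k^{-1}\Gamma_k$ flatten to $\{x_n=0\}$ with $C^{1,\alpha}$ norm $\to0$ on compacts; and the barrier bound $|Q_k(\rho_k)|\le C\rho_k^{\kappa-\gamma}$ from the first step, combined with the smallness of $\gamma-\kappa$, the inequality $\gamma(1+\alpha)>\Upsilon$, and $\tilde\varrho_k\le C\rho_k^\alpha$ on $\rho_k^{-1}U_k^-$, shows that the boundary datum $g_k:=v_k=-Q_k(\rho_k)\rho_k^\gamma\tilde\varrho_k^\gamma/\phi_k(\rho_k)$ on $\rho_k^{-1}U_k^-$ has $\|g_k\|_{C^{\gamma_0/2}(\rho_k^{-1}U_k^-)}\to0$, while $(-L_k+b\cdot\nabla)v_k$ in $\rho_k^{-1}U_k^+$ is bounded by a vanishing multiple of $1+{\rm dist}(\cdot,\rho_k^{-1}(\R^n\setminus U_k^+))^{\gamma-1}$. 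Applying Proposition~\ref{prop.boundestimatesg} (rescaled to balls $B_R$) together with the interior estimates of Proposition~\ref{prop.intest} yields a uniform $C^\sigma_{\rm loc}(\R^n)$ bound for $v_k$; passing to a subsequence, $v_k\to v_\infty$, $L_k\to L_\infty$, $\tilde\varrho_k\to\varrho_\infty$, where $v_\infty$ solves $(-L_\infty+b\cdot\nabla)v_\infty=0$ in $\R^n_+$, $v_\infty=0$ in $\R^n_-$, with $\|v_\infty\|_{L^\infty(B_R)}\le CR^\Upsilon$. By the Liouville Theorem~\ref{thm.liouv}, $v_\infty=Q_\infty(x_n)_+^\gamma$; but $\int_{B_1}v_\infty\,\varrho_\infty^\gamma=0$ with $\varrho_\infty>0$ on $\{x_n>0\}$ forces $Q_\infty=0$, i.e. $v_\infty\equiv0$, contradicting $\|v_\infty\|_{L^\infty(B_1)}=1$. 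This proves the bound on $\theta(0^+)$, and the second paragraph then gives the statement (with $\varrho^\gamma$, hence with $(x_n)_+^\gamma$).

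The delicate point is the last paragraph, and within it the interaction between the mere $C^{1,\alpha}$ regularity of $\Gamma$ and the non-integer, operator-dependent exponent $\gamma$: the model profile $\varrho^\gamma$ is neither an exact solution nor exactly zero on $U^-$, and the resulting right-hand side is not even bounded — it behaves like ${\rm dist}(\cdot,\Gamma)^{\gamma-1}$ near $\Gamma$. Making all these error terms quantitatively small — which requires the sharp barrier of Proposition~\ref{prop.supersolution} to improve the bound on $Q_k(\rho_k)$ beyond the trivial $\rho_k^{-\gamma}$, the smallness of $\eta_\nu$ relative to $\alpha\gamma_0$, and the fact that after rescaling $\Gamma_k$ becomes almost flat — and then feeding them into the up-to-the-boundary Hölder estimate Proposition~\ref{prop.boundestimatesg} is the technical core of the proof; the remaining ingredients (the iteration bookkeeping, the compactness, and the closing use of the Liouville theorem) are the by-now-standard blow-up scheme.
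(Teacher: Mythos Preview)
Your overall architecture --- contradiction, $L^2$-projection coefficients $Q_r$, the monotone quantity $\theta$, blow-up, compactness, and closure via the Liouville Theorem~\ref{thm.liouv} --- is exactly the paper's. The difference is your choice of model profile: you take $\varrho^\gamma$ (zero on $U^-$, approximate solution in $U^+$), whereas the paper takes $(x_n)_+^{\gamma}$ (exact solution in $\{x_n>0\}$, but nonzero on part of $U^-$). This is not a cosmetic choice, and it is precisely where your argument has a gap.

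With $(x_n)_+^\gamma$ the operator annihilates the profile in $\{x_n>0\}$, so after rescaling the equation for $v_m$ holds with \emph{bounded} right-hand side $r_m^{1-\Upsilon}\theta(r_m)^{-1}f_{i_m}(r_m\cdot)\to0$ in the intersection $U^+_{R,m}:=B_R\cap r_m^{-1}U^+_{i_m}\cap\{x_n>0\}$. The price is that $v_m$ is not zero on the thin crescent $B_R\cap\big(r_m^{-1}U^-_{i_m}\cup\{x_n\le0\}\big)$; the paper controls $v_m$ there in $C^{\sigma_0}$ by combining the barrier $|u_{i_m}|\le Cd^\kappa$ from Proposition~\ref{prop.supersolution}, the bound $|Q_i(r)|\le C\theta(r)$, and interpolation with the global H\"older estimate, and then feeds this as the datum $g$ into Proposition~\ref{prop.boundestimatesg}. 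That proposition is applied with $f\in L^\infty$, which it genuinely requires.

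With your $\varrho^\gamma$ the boundary datum is trivial ($u$ and $\varrho^\gamma$ both vanish on $U^-$; your formula for $g_k$ should in fact be identically~$0$), but now the right-hand side for $v_k$ in $\rho_k^{-1}U_k^+$ contains $-\dfrac{\rho_k\,Q_k(\rho_k)}{\phi_k(\rho_k)}\,\big[(-L_k+b\cdot\nabla)\varrho_k^\gamma\big](\rho_k\,\cdot)$, and by your own expansion this behaves like $d(x)^{\gamma-1}$ near the boundary. Proposition~\ref{prop.boundestimatesg} does not accept an unbounded $f$, so you cannot invoke it as written. Moreover, the claim that the prefactor is ``vanishing'' is not justified: using $|Q_k(\rho_k)|\le C\theta_k(\rho_k)$ and $\phi_k(\rho_k)\ge\tfrac12\theta_k(\rho_k)\rho_k^{\Upsilon}$, the leading coefficient of $d(x)^{\gamma-1}$ is of order $\rho_k^{\gamma+\alpha-\Upsilon}$ for the $|x_0|^\alpha$-part (which does vanish) but of order $\eta_\nu\,\rho_k^{\gamma-\Upsilon}$ for the $\eta_\nu$-part, and $\gamma-\Upsilon<0$; your alternative bound $|Q_k(\rho_k)|\le C\rho_k^{\kappa-\gamma}$ gives $\eta_\nu\,\rho_k^{\kappa-\Upsilon}/\theta_k(\rho_k)$, and since $\kappa<\Upsilon$ and no rate is available for $\theta_k\to\infty$, this need not tend to~$0$ either. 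So the step ``Applying Proposition~\ref{prop.boundestimatesg} \ldots yields a uniform $C^\sigma_{\rm loc}$ bound'' is the missing idea: you would need either a version of Proposition~\ref{prop.boundestimatesg} allowing $f$ with a controlled $d^{\gamma-1}$ singularity, or to switch to the exact half-space profile as the paper does and pay the price on the Dirichlet side instead.
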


Before proving the previous result let us state a useful lemma. It can be found in \cite[Lemma 5.3]{RS16}.

\begin{lem}[\cite{RS16}]
\label{lem.RS16}
Let $1>\Upsilon > \beta_0\geq \beta$ and $\nu \in \Sp^{n-1}$ some unit vector. Let $u\in C(B_1)$ and define
\[
\phi_r (x) := Q_*(r) (x\cdot \nu)^\beta_+,
\]
where
\[
Q_*(r) := {\rm arg~min}_{Q\in \R} \int_{B_r} \big(u(x) - Q(x\cdot \nu)^\beta_+\big)^2dx = \frac{\int_{B_r} u(x) (x\cdot \nu)^\beta_+ dx}{\int_{B_r} (x\cdot \nu)^{2\beta}_+ dx}.
\]

Assume that for all $r \in (0,1)$ we have
\[
\|u - \phi_r\|_{L^\infty(B_r)} \leq C_0r^\Upsilon.
\]
Then, there is $Q\in \R$ with $|Q| \leq C(C_0 + \|u\|_{L^\infty(B_1)})$ such that
\[
\|u- Q(x\cdot \nu)^\beta_+\|_{L^\infty(B_r)}\leq C C_0 r^\Upsilon
\]
for some constant $C$ depending only on $\Upsilon$ and $\beta_0$.
\end{lem}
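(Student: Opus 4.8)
The plan is to prove Lemma~\ref{lem.RS16}, which is a purely one-point-at-a-time ``dyadic summation'' statement: from a family of $L^\infty$-approximations by rescaled fixed-profile functions $\phi_r(x) = Q_*(r)(x\cdot\nu)_+^\beta$ on every ball $B_r$, one extracts a single coefficient $Q$ that works uniformly with the optimal rate $r^\Upsilon$. The key mechanism is that $Q_*(r)$, the best $L^2(B_r)$-coefficient, cannot oscillate too fast between dyadic scales, because both $u$ and the profile have controlled behavior; summing the increments $|Q_*(2^{-k}) - Q_*(2^{-k-1})|$ geometrically produces the limit $Q := \lim_{r\downarrow 0} Q_*(r)$ and the estimate. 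Throughout, $\beta$ is \emph{fixed} and all constants may depend on $\Upsilon$ and $\beta_0$ only.

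\textbf{Step 1: increment estimate for $Q_*(r)$.} First I would record the normalization $\int_{B_r}(x\cdot\nu)_+^{2\beta}\,dx = \kappa_\beta\, r^{n+2\beta}$ for a dimensional constant $\kappa_\beta>0$ depending on $\beta$ (hence bounded below since $\beta\le\beta_0<1$), and the Cauchy--Schwarz bound $|Q_*(r)| \le C r^{-\beta}\|u\|_{L^\infty(B_r)}$. Then, for consecutive dyadic radii $r$ and $r/2$, write
\[
\big(Q_*(r) - Q_*(r/2)\big)(x\cdot\nu)_+^\beta = \big(\phi_r(x) - u(x)\big) + \big(u(x) - \phi_{r/2}(x)\big),
\]
which on $B_{r/2}$ is bounded in $L^\infty$ by $C_0 r^\Upsilon + C_0 (r/2)^\Upsilon \le C C_0 r^\Upsilon$. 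Evaluating the $L^\infty$ norm of the left side over $B_{r/2}$ and dividing by $\sup_{B_{r/2}}(x\cdot\nu)_+^\beta = c_\beta (r/2)^\beta$ gives
\[
\big|Q_*(r) - Q_*(r/2)\big| \le C C_0\, r^{\Upsilon-\beta}.
\]
Since $\Upsilon - \beta > \Upsilon - \beta_0 > 0$, this is the crucial summable increment bound.

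\textbf{Step 2: define $Q$ and sum the geometric series.} Applying Step 1 with $r = 2^{-k}$ and summing over $k \ge k_0$, the sequence $\{Q_*(2^{-k})\}$ is Cauchy, so $Q := \lim_{k\to\infty} Q_*(2^{-k})$ exists, with
\[
\big|Q - Q_*(2^{-k_0})\big| \le C C_0 \sum_{k\ge k_0} 2^{-k(\Upsilon-\beta)} \le C C_0\, 2^{-k_0(\Upsilon-\beta)}.
\]
Combining with $|Q_*(2^{-k_0})| \le C 2^{k_0\beta}\|u\|_{L^\infty(B_1)}$ at $k_0 = 0$ yields $|Q| \le C(C_0 + \|u\|_{L^\infty(B_1)})$ as claimed. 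Moreover, for arbitrary $r \in (0,1)$, picking $k_0$ with $2^{-k_0-1} \le r < 2^{-k_0}$ and using monotone comparison between $B_r$ and the two bracketing dyadic balls (the same two-term telescoping as in Step 1, now comparing $Q_*(r)$ to $Q_*(2^{-k_0})$), one gets $|Q_*(r) - Q| \le C C_0\, r^{\Upsilon-\beta}$ for all $r\in(0,1)$.

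\textbf{Step 3: conclude the pointwise bound.} On $B_r$ estimate
\[
\big|u(x) - Q(x\cdot\nu)_+^\beta\big| \le \big|u(x) - \phi_r(x)\big| + \big|Q_*(r) - Q\big|\,(x\cdot\nu)_+^\beta \le C_0 r^\Upsilon + C C_0\, r^{\Upsilon-\beta}\cdot C r^\beta = C C_0\, r^\Upsilon,
\]
which is exactly the desired conclusion with the constant depending only on $\Upsilon$ and $\beta_0$ (through the geometric-series ratio and the lower bound on $\kappa_\beta$, which is controlled once $\beta \le \beta_0$). The main obstacle, such as it is, is bookkeeping: making sure that the comparison of non-dyadic $B_r$ to the bracketing dyadic balls uses only inclusions $B_{r/2}\subset B_r \subset B_{2r}$ and the already-established dyadic increment bound, and checking that all constants genuinely depend only on $\Upsilon$ and $\beta_0$ and not on $\beta$ itself — which holds because the only $\beta$-dependent quantities ($\kappa_\beta$, $c_\beta$, $C r^\beta/r^\beta$ ratios) are uniformly bounded above and below for $0 \le \beta \le \beta_0 < 1$.
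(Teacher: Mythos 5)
Your proof is correct and is exactly the standard dyadic telescoping argument behind this lemma: the paper itself only cites \cite{RS16} for it, and the same mechanism (bounding $|Q_*(r)-Q_*(r/2)|$ by $CC_0r^{\Upsilon-\beta}$ via the two-term decomposition on $B_{r/2}$, then summing the geometric series since $\Upsilon-\beta\geq\Upsilon-\beta_0>0$) is what both \cite{RS16} and the computation \eqref{eq.longeq} in the proof of Proposition~\ref{prop.expansion} use. No gaps; the only loose end is the trivial bookkeeping for $r\in[1/2,1)$, where the bracketing dyadic radius $2^{-k_0}=1$ falls outside the hypothesis range, which you can absorb into the constant.
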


We can now prove Proposition~\ref{prop.expansion}.

\begin{proof}[Proof of Proposition~\ref{prop.expansion}]
Let us argue by contradiction. Suppose that there are sequences $\Gamma_i$, $U^+_i$, $U^-_i$, $L_i$, $b_i$, $u_i$, and $f_i$ that satisfy the assumptions
\begin{enumerate}[$~~~~~~~~\bullet~$]
\item $\Gamma_i$ is a $C^{1,\alpha}$ graph with bounded $C^{1,\alpha}$ norm independently of $i$, splitting $B_1$ into $U^+_i$ and $U^-_i$ with $0\in \Gamma_i$ and with $e_n$ being the normal vector at 0 pointing towards $U^+_i$.
\item $L_i$ are of the form \eqref{eq.L}-\eqref{eq.L.cond}, and $\|b_i\|=\|b\|$;
\item For each $\Gamma_i$, the corresponding $\eta_\nu$ as defined in \eqref{eq.etanu} fulfils $\eta_\nu \leq (\alpha\gamma_0)/64$;
\item $\|u_i\|_{L^\infty(\R^n)} + \|f_i\|_{L^\infty(U^+)} = 1$;
\item $u_i$ solves $(-L_i + b_i\cdot \nabla) u_i = f_i$ in $U^+_i$, $u_i = 0$ in $U^-_i$;
\item If we define $\gamma_i := \gamma(b_i\cdot e_n / \chi_i)$ with $\gamma$ as in \eqref{eq.gamma} and $\chi_i = \chi_i(e_n)$ as in \eqref{eq.chi} with the operator $L_i$, then $\gamma_i \in [\gamma_0,\gamma_0(1+\alpha/8)]$;
\end{enumerate}
but they are such that for all $C > 0$ there exists some $i$ such that there is no constant $Q$ satisfying
\[
\big| u_i(x) - Q (x_n )_+^{\gamma_i} \big| \leq C|x|^\Upsilon
\quad \textrm{for all}\quad x \in B_1.
\]
\\
{\it Step 1: Construction and properties of the blow up sequence.}

Let us denote
\[
\phi_{i,r} := Q_{i}(r) (x_n)^{\gamma_i}_+,
\]
where
\[
Q_i(r) := {\rm arg~min}_{Q\in \R} \int_{B_r} (u_i(x) - Q(x_n)^{\gamma_i}_+)^2dx = \frac{\int_{B_r} u_i(x) (x_n)^{\gamma_i}_+ dx}{\int_{B_r} (x_n)^{2\gamma_i}_+ dx}.
\]
From Lemma~\ref{lem.RS16} with $\beta = \gamma_i$ and $\beta_0 = \gamma_0(1+ \alpha/8)$ we have that
\[
\sup_i \sup_{r>0} \left\{ r^{-\Upsilon} \|u_i - \phi_{i,r}\|_{L^\infty(B_r)} \right\}= \infty.
\]

Define the monotone function
\[
\theta(r) := \sup_i \sup_{r' > r} \left\{ (r')^{-\Upsilon} \|u_i - \phi_{i,r'}\|_{L^\infty(B_{r'})} \right\}.
\]

Note that for $r > 0$, $\theta(r) < \infty$, and $\theta(r) \to \infty$ as $r \downarrow 0$. Now take a sequences $r_m\downarrow 0$ and $i_m$ such that
\[
(r_m)^{-\Upsilon} \|u_{i_m} - \phi_{i_m, r_m}\|_{L^\infty(B_{r_m})} \geq \frac{\theta(r_m)}{2},
\]
and denote $\phi_m = \phi_{i_m, r_m}$.

Consider now
\[
v_m (x) = \frac{u_{i_m} (r_m x) - \phi_m (r_m x)}{r_m^\Upsilon \theta(r_m)}.
\]

By definition of $\phi_m$ we have the orthogonality condition for all $m \geq 1$,
\begin{equation}
\label{eq.orth}
\int_{B_1} v_m(x) (x_n)^{\gamma_i}_+ dx = 0.
\end{equation}

Note that also from the choice of $r_m$ we have a nondegeneracy condition for $v_m$,
\begin{equation}
\label{eq.nondeg}
\|v_m\|_{L^\infty(B_1)} \geq \frac{1}{2}.
\end{equation}

From the definition of $\phi_{i, r}$, $\phi_{i, 2r} - \phi_{i, r} = \big(Q_i (2r) - Q_i (r) \big) (x_n)^{\gamma_i}_+$ so that
\begin{align*}
|Q_i(2r) - Q_i(r)| r^{\gamma_i} & = \|\phi_{i, 2r} - \phi_{i, r}\|_{L^\infty(B_r)} \\
& \leq  \|\phi_{i, 2r} - u\|_{L^\infty(B_{2r})}+ \|\phi_{i, r} - u\|_{L^\infty(B_r)}\leq Cr^\Upsilon \theta(r).\\
\end{align*}

Proceeding inductively, if $R = 2^N$, then
\begin{equation}
\label{eq.longeq}
\begin{split}
\frac{r^{\gamma_i - \Upsilon} |Q_i(Rr)- Q_i(r)|}{\theta(r)}& \leq \sum_{j = 0}^{N-1} 2^{j(\Upsilon - \gamma_i)} \frac{{(2^jr)}^{\gamma_i - \Upsilon} |Q_i(2^{j+1}r)- Q_i(2^jr)|}{\theta(r)} \\
& \leq C \sum_{j = 0}^{N-1} 2^{j(\Upsilon - \gamma_i)} \frac{\theta(2^j r)}{\theta(r)} \leq C 2^{N(\Upsilon - \gamma_i)} = CR^{\Upsilon -\gamma_i}.
\end{split}
\end{equation}

Thus, we obtain a bound on the growth control of $v_m$ given by
\begin{equation}
\label{eq.growthctrl}
\|v_m\|_{L^\infty(B_R)} \leq CR^\Upsilon\quad \textrm{for all} \quad R \geq 1.
\end{equation}

Indeed,
\begin{align*}
\|v_m\|_{L^\infty(B_R)} & = \frac{1}{\theta(r_m) r_m^\Upsilon} \|u_i - Q_{i_m}(r_m) (x_n)^{\gamma_i}_+\|_{L^\infty(Rr_m)}\\
& \leq \frac{1}{\theta(r_m) r_m^\Upsilon} \|u_i - Q_{i_m}(Rr_m) (x_n)^{\gamma_i}_+\|_{L^\infty(Rr_m)} +\\
&~~~~~~~~~~~~~~~~+\frac{1}{\theta(r_m) r_m^\Upsilon} |Q_{i_m}(Rr_m) - Q_{i_m}(r_m) | (Rr_m)^{\gamma_i}\\
& \leq \frac{R^\Upsilon \theta(Rr_m)}{\theta(r_m)} + CR^\Upsilon,
\end{align*}
and the result follows from the monotonicity of $\theta$.

Notice also that the previous computation in \eqref{eq.longeq} also gives a bound for $Q_i(r)$ given by
\begin{equation}
\label{eq.qbound}
|Q_i(r)| \leq C\theta(r),
\end{equation}
which follows by putting $R = r^{-1}$.

{\it Step 2: Convergence of the blow up sequence.}

In this second step we show that $v_m$ converges locally uniformly in $\R^n$ to some function $v$ satisfying
\begin{equation}
\label{eq.v}
\left\{\begin{array}{rcll}
(-\tilde{L}+ \tilde{b}\cdot\nabla )v& = & 0 & \quad \textrm{in}\quad \R^n_+\\
v& = &0 & \quad \textrm{in}\quad \R^n_-, \\
\end{array}\right.
\end{equation}
for some operator $\tilde{L}$ of the form \eqref{eq.L}-\eqref{eq.L.cond}, $\|\tilde{b}\|= \|b\|$.

To do so, define
\[
U^+_{R, m} := B_R \cap \left( r_m^{-1}U^+_{i_m}\right) \cap \{x_n > 0\},
\]
and suppose that it is well defined by assuming $m$ is large enough so that $Rr_m < 1/2$.

Notice that in $U^+_{R,m}$, $v_m$ satisfies an elliptic equation with drift,
\[
(-L_{i_m} + b_{i_m}\cdot\nabla) v_m (x) = \frac{r_m}{r_m^\Upsilon\theta(r_m)} f_{i_m} (r_m x)\quad \textrm{in}\quad U^+_{R,m},
\]
since we know that $(-L_i+b_i\cdot\nabla) \phi_m = 0$ in $\{x_n > 0\}$. In particular, since $\Upsilon < 1$, the right-hand side converges uniformly to 0 as $r_m \downarrow 0$.

We will now show that
\begin{equation}
\label{eq.uimphi}
\|u_{i_m} - \phi_m\|_{L^\infty(B_r \cap (U^-_{i_m} \cup \R^n_-)} \leq C\theta(r_m) r^{(1+\alpha)\kappa}\quad\textrm{for all} \quad r < 1/4,
\end{equation}
and where the constant $C$ is independent of $m$, and $\kappa := \gamma_0\left(1-\frac{\alpha}{16}\right)$. Notice that $\kappa < \gamma_0 - 2\eta_\nu$, so that we can use the supersolution from Proposition \ref{prop.supersolution} to get
\[
|u_{i_m}| \leq C\left({\rm dist}(x, U^-)\right)^{\kappa},
\]
with $C$ depending only on $n$, the $C^{1,\alpha}$ norm of $\Gamma$, $\alpha$, the ellipticity constants, and $\|b\|$. On the other hand, by definition of $\phi_m$,
\[
|\phi_m(x)|\leq C Q_{i_m}(r_m) \left({\rm dist}(x, \R^n_-)\right)^{\gamma_i}\leq C\theta(r_m) \left({\rm dist}(x, \R^n_-)\right)^{\kappa}\quad \textrm{for all}\quad x\in B_1,
\]
where we used \eqref{eq.qbound}. Finally, since the domain is $C^{1,\alpha}$, we have that
\[
{\rm dist}(x, U^-_{i_m}) \leq Cr^{1+\alpha},\quad {\rm dist}(x, \R^n_-) \leq Cr^{1+\alpha}\quad\textrm{in}\quad B_r \cap (U^-_{i_m} \cup \R^n_-),
\]
where the constant $C$ depends only on the $C^{1,\alpha}$ norm of the domain $U^+_{i_m}$, and therefore, it is independent of $m$. Thus, combining the last two expressions we get \eqref{eq.uimphi}.

Now, from Proposition~\ref{prop.boundestimatesg} we have
\[
\|u_{i_m}\|_{C^\sigma(B_{1/8})} \leq C,
\]
uniformly in $m$, for some $\sigma\in (0,\gamma_0)$.

From the regularity of $\phi_m$ this yields, in particular,
\begin{equation}
\label{eq.interp1}
\|u_{i_m} - \phi_m\|_{C^\sigma\left(B_r \cap (U^-\cup \R^n_-)\right)} \leq C\theta(r_m), 	
\end{equation}
where we have used again the bound \eqref{eq.qbound}.

Thus, interpolating \eqref{eq.uimphi} and \eqref{eq.interp1} there exists some $\sigma_0 < \sigma$ (depending on $\sigma$, $\gamma_0$, and $\alpha$) such that
\[
\|u_{i_m} - \phi_m\|_{C^{\sigma_0} (B_r \cap (U^-_{i_m} \cup \R^n_-))} \leq C\theta(r_m)r^\Upsilon.
\]
Notice that we can do so because $\Upsilon < \kappa(1+\alpha)$. Scaling the previous expression we obtain
\begin{equation}
\label{eq.interp2}
\|v_m\|_{C^{\sigma_0} (B_R\setminus U^+_{R,m})} \leq C(R)\quad\textrm{for all }m\textrm{ with } Rr_m< 1/4,
\end{equation}
for some constant $C(R)$ that depends on $R$, but is independent of $m$.

We now want to apply Proposition~\ref{prop.boundestimatesg} to $v_m$, rescaled to balls $B_R$. Recall that
\[
(-L_{i_m} + b_{i_m}\cdot\nabla) v_m (x) = \frac{r_m}{r_m^\Upsilon\theta(r_m)} f_{i_m} (r_m x)\quad \textrm{in}\quad U^+_{R,m},
\]
and $v_m$ is $C^{\sigma_0}$ outside $U^+_{R,m}$ by \eqref{eq.interp2}. Notice also that the boundary $\de U^+_{R,m}$ has $C^{1,\alpha}$ norm smaller than the $C^{1,\alpha}$ norm of $\Gamma$ thanks to the fact that we are rescaling with smaller $r_m$ and $Rr_m < 1/4$. Thus, Proposition~\ref{prop.boundestimatesg} can be applied and we obtain that there exists some $\sigma'>0$ small such that
\[
\|v_m\|_{C^{\sigma'}(B_{R/2})} \leq C(R)\quad\textrm{for }m\textrm{ with}\quad Rr_m < 1/4.
\]
we have again that the constant $C(R)$ depends on $R$, but is independent of $m$; i.e, we have reached a uniform $C^{\sigma'}$ bound on $v_m$ over compact subsets.

Thus, up to taking a subsequence, $v_m$ converge locally uniformly to some $v$.
\\[0.3cm]
{\it Step 3: Contradiction.} Up to taking a subsequence if necessary, $L_{i_m}$ converges weakly to some operator $\tilde{L}$ of the form \eqref{eq.L}-\eqref{eq.L.cond}, and $b_{i_m}$ converges to some $\tilde{b}$ with $\|\tilde{b}\|=\|b\|$. Notice that, in particular, this means that $\gamma_i$ converges to some $\gamma_*\in [\gamma_0,\gamma_0(1+\alpha/8)]$, and $\gamma_*= \gamma(\tilde{b}\cdot e_n/\tilde{\chi})$, where $\tilde{\chi} = \tilde{\chi}(e_n)$ is the associated constant defined as in \eqref{eq.chi} with the operator $\tilde{L}$.

On the other hand, the domains $U^+_{i_m}$ converge uniformly to $\R^n_+$ over compact subsets by construction. Thus, passing all this to the limit, we reach that $v$ satisfies \eqref{eq.v}.

Now, passing the growth control \eqref{eq.growthctrl} to the limit, we reach
\[
\|v\|_{L^\infty(B_R)}\leq CR^\Upsilon\quad\textrm{for all}\quad R \geq 1,
\]
so that we can apply the Liouville theorem in the half space, Theorem~\ref{thm.liouv}, to get
\[
v(x) = C(x_n)_+^{{\gamma}_*}.
\]

Passing to the limit \eqref{eq.orth} and using this last expression, we obtain $v \equiv 0$. However, by passing \eqref{eq.nondeg} to the limit we get
\[
\|v\|_{L^\infty(B_1)} \geq \frac{1}{2},
\]
a contradiction.
\end{proof}

\begin{proof}[Proof of Theorem~\ref{thm.expansion_}]
The result follows from Proposition~\ref{prop.expansion} applied to small enough balls so that the condition on $\eta_\nu$ is fulfilled. Notice that the constant $\sigma$ cannot go to 0, because $\tilde\gamma(x_0)$ cannot be made arbitrarily small for a given $L$ and $b$.
\end{proof}

\section{Proof of Theorems \ref{thm.1} and \ref{thm.2}}
\label{sec.8}
In this section, we will prove Theorems~\ref{thm.1} and \ref{thm.2}. We already know that if $x_0$ is a regular free boundary point, then the free boundary is $C^{1,\alpha}$ in a neighbourhood. Next, using the results of the previous section, we show that the regular set is open, and that at any regular free boundary point we have \eqref{eq.ur} below.

\begin{prop}
\label{prop.regopen}
Let $L$ be an operator of the form \eqref{eq.L}-\eqref{eq.L.cond}, and let $b\in \R^n$. Let $u$ be a solution to \eqref{eq.pb}-\eqref{eq.pb2}-\eqref{eq.pb3}.

Then the set of regular free boundary points is relatively open. Moreover, around each regular point $x_0$
\begin{equation}
\label{eq.ur}
0<c r^{1+\tilde\gamma(x_0)} \leq \sup_{B_r(x_0)} u \leq Cr^{1+\tilde\gamma(x_0)}\quad\textrm{for all}\quad r\leq 1,
\end{equation}
for some positive constants $c$ and $C$ depending only on $n$, $\|b\|$, and the ellipticity constants. Here, $\tilde\gamma(x_0)$ is given by \eqref{eq.tildegamma} with $\nu(x_0)$ being the normal vector to the free boundary at $x_0$ pointing towards $\{u > 0\}$.
\end{prop}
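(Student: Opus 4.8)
The plan is to deduce both assertions from the $C^{1,\alpha}$ regularity of the free boundary near $x_0$ (Proposition~\ref{prop.C1sigma}) together with the boundary expansion of Theorem~\ref{thm.expansion_}, applied not to $u$ itself but to each first derivative $\partial_i u$. Translate so that $x_0=0$ and rotate so that $\nu(0)=e_n$, and write $\tilde\gamma:=\tilde\gamma(0)$. By Propositions~\ref{prop.fblip} and~\ref{prop.C1sigma}, after a harmless rescaling I may assume that in $B_1$ the set $\{u>0\}$ is a $C^{1,\alpha}$ domain with arbitrarily small $C^{1,\alpha}$ norm and arbitrarily small $\eta_\nu$ (as in~\eqref{eq.etanu}). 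Since $\partial_i u$ solves $(-L+b\cdot\nabla)(\partial_i u)=\partial_i f$ in $\{u>0\}$, vanishes in $B_1\setminus\{u>0\}$ (because $u\in C^1$ and $u\ge 0$), and $\|\partial_i f\|_{L^\infty}\le 1$, $\|\partial_i u\|_{L^\infty}\le 1$ by~\eqref{eq.pb2}--\eqref{eq.pb3}, Theorem~\ref{thm.expansion_} provides a constant vector $Q=(Q_1,\dots,Q_n)$ with $|Q|\le C$ and $|\partial_i u(x)-Q_i(x_n)_+^{\tilde\gamma}|\le C|x|^{\tilde\gamma+\sigma}$ for $x\in B_1$. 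In particular $|\nabla u(x)|\le C|x|^{\tilde\gamma}$ near $0$, and integrating along the segment from $0$ (where $u=0$) yields the upper bound $\sup_{B_r}u\le Cr^{1+\tilde\gamma}$ in~\eqref{eq.ur}.

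Next I would integrate the expansions of the $\partial_i u$ along segments through $0$ to obtain
\[
u(x)=\frac{Q_n}{1+\tilde\gamma}(x_n)_+^{1+\tilde\gamma}+\frac{Q'\cdot x'}{1+\tilde\gamma}(x_n)_+^{\tilde\gamma}+O\big(|x|^{1+\tilde\gamma+\sigma}\big),\qquad Q'=(Q_1,\dots,Q_{n-1}).
\]
Testing $u\ge 0$ at points $x=t e_n+s\omega$ with $\omega\perp e_n$, $t>0$ small, and $s=-t^{1-\rho}\,\mathrm{sign}(Q'\cdot\omega)$ for a small $\rho>0$, the leading term is $\approx-|Q'\cdot\omega|\,t^{1-\rho+\tilde\gamma}/(1+\tilde\gamma)$, which dominates the $O(t^{(1-\rho)(1+\tilde\gamma+\sigma)})$ error once $\rho<\sigma/(\tilde\gamma+\sigma)$; this forces $Q'=0$. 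Hence $Q=(1+\tilde\gamma)c_0\,e_n$ with $c_0\ge0$ (using $u\ge0$ on the normal ray), i.e.
\[
u(x)=c_0\,(x_n)_+^{1+\tilde\gamma}+O\big(|x|^{1+\tilde\gamma+\sigma}\big),
\]
which is already the expansion~\eqref{eq.expansion}, with $\tilde\gamma$ given by~\eqref{eq.tildegamma}.

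It then remains to prove $c_0>0$: this gives the lower bound in~\eqref{eq.ur} by evaluating the expansion at $x=re_n$ (and adjusting constants for $r$ up to $1$). Suppose $c_0=0$; then $|\nabla u(x)|\le C|x|^{\tilde\gamma+\sigma}$ near $0$, so $\sup_{B_r}u\le Cr^{1+\tilde\gamma+\sigma}$. But $0$ is a regular free boundary point, so $\limsup_{r\downarrow0}\|u\|_{L^\infty(B_r)}/r^{2-\varepsilon}=\infty$ for some $\varepsilon>0$; the upper bound already proven forces $\varepsilon<1-\tilde\gamma$, and since regularity with exponent $\varepsilon$ trivially implies regularity with every larger exponent, I may assume $\varepsilon\in[\,1-\tilde\gamma-\sigma,\,1-\tilde\gamma\,)$. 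Then $\|u\|_{L^\infty(B_r)}/r^{2-\varepsilon}\le Cr^{\varepsilon+\tilde\gamma+\sigma-1}\le C$ for $r\le 1$, a contradiction; hence $c_0>0$, which establishes~\eqref{eq.ur}. (Proposition~\ref{prop.regpt} enters only to guarantee that the blow-up of $u$ at $0$ is a multiple of $(x_n)_+^{1+\tilde\gamma}$ with the \emph{correct} direction $e_n=\nu(0)$, which is what makes the free boundary $C^{1,\alpha}$ at $0$ and thus allows Theorem~\ref{thm.expansion_} to be applied.) For openness, the same argument runs at every free boundary point $y_0$ in a fixed neighbourhood $\mathcal N$ of $x_0$ on which $\{u>0\}$ is $C^{1,\alpha}$, with $C$ and $\sigma$ uniform, and the coefficient $c_0(y_0)$ in $u(x)=c_0(y_0)\big((x-y_0)\cdot\nu(y_0)\big)_+^{1+\tilde\gamma(y_0)}+O(\cdots)$ depends continuously on $y_0$ (the expansion is uniform, and $u$, $\nu$, $\tilde\gamma$ are continuous on $\mathcal N$). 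Since $c_0(x_0)>0$ we get $c_0(y_0)>0$ for $y_0$ near $x_0$, hence $\sup_{B_r(y_0)}u\ge c(y_0)r^{1+\tilde\gamma(y_0)}>0$ for small $r$; as $\tilde\gamma(y_0)<1$ uniformly near $x_0$, this forces $y_0$ to be a regular point, so the regular set is relatively open.

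The hard part will be the nondegeneracy $c_0>0$: with no monotonicity or Weiss/Almgren formula available one cannot compare scales directly, so the argument must instead play the scale-uniform expansion from Theorem~\ref{thm.expansion_} against the precise definition of a regular point, exploiting the freedom to enlarge the admissible growth exponent $\varepsilon$. A secondary technical nuisance is keeping the constants $c,C$ universal, which forces one to first flatten the free boundary via Proposition~\ref{prop.fblip} so that Theorem~\ref{thm.expansion_} can be invoked with constants depending only on $n$, $\|b\|$, and the ellipticity constants.
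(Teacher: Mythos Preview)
Your overall strategy---apply Theorem~\ref{thm.expansion_} to each $\partial_i u$, integrate, and then show the leading coefficient is positive---is exactly the paper's. The upper bound in~\eqref{eq.ur}, the derivation of the expansion $u(x)=c_0(x_n)_+^{1+\tilde\gamma}+O(|x|^{1+\tilde\gamma+\sigma})$, and the openness via continuity of the coefficient are all essentially the same as in the paper. (Your argument for $Q'=0$ is correct but unnecessary: integrating first along $(0,\dots,0)\to(x',0)$ and then along $(x',0)\to(x',x_n)$ yields the expansion with no tangential term, and path-independence forces $Q'=0$.)

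The genuine gap is in your proof that $c_0>0$. Your monotonicity claim ``regularity with exponent $\varepsilon$ trivially implies regularity with every larger exponent'' is backwards. If $\varepsilon'>\varepsilon$ then $r^{2-\varepsilon'}>r^{2-\varepsilon}$ for $r<1$, so $\|u\|_{L^\infty(B_r)}/r^{2-\varepsilon'}\le \|u\|_{L^\infty(B_r)}/r^{2-\varepsilon}$; thus regularity with a \emph{larger} exponent implies regularity with a smaller one, not conversely. Concretely, if $u\sim r^{1+\tilde\gamma}$ then $0$ is regular with every $\varepsilon<1-\tilde\gamma$ but not with any $\varepsilon\ge 1-\tilde\gamma$. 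Hence you cannot assume $\varepsilon\in[\,1-\tilde\gamma-\sigma,\,1-\tilde\gamma)$; the original regularity exponent $\varepsilon_0$ may well be tiny, and the bound $\sup_{B_r}u\le Cr^{1+\tilde\gamma+\sigma}$ (valid under $c_0=0$) only contradicts regularity with exponents $\ge 1-\tilde\gamma-\sigma$, which you do not have.

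The paper closes this gap by a barrier argument, not by playing with~$\varepsilon$. Using Proposition~\ref{prop.regpt}, the rescaled normal derivative $\partial_n v_r$ is uniformly bounded below on a compact set inside a convex $C^{1,\alpha}$ domain $P\subset\{u>0\}$ touching the free boundary at $0$; then the subsolution $\phi=\varrho^\kappa$ from Lemma~\ref{lem.subsolution} with $\kappa=\tilde\gamma(0)+3\eta_\nu^{(h)}<\tilde\gamma(0)+\sigma$ is pushed under $\partial_n u$ by the maximum principle, giving $\partial_n u(te_n)\ge ct^{\kappa}$. If $c_0=Q(0)=0$ the expansion forces $|\partial_n u(x)|\le C|x|^{\tilde\gamma(0)+\sigma}$, contradicting the barrier bound since $\kappa<\tilde\gamma(0)+\sigma$. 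So Proposition~\ref{prop.regpt} enters here essentially, not only to produce the $C^{1,\alpha}$ free boundary.
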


\begin{proof}
Suppose without loss of generality that $x_0 = 0$ and $\nu(x_0) = e_n$. The free boundary, $\Gamma$, is $C^{1,\alpha}$ in $B_{r_0}$ for some $\alpha, r_0 > 0$ by Proposition~\ref{prop.C1sigma}. Apply now Theorem~\ref{thm.expansion_} to the partial derivative $\de_n u $ around points $z\in B_{r_0/2}\cap \Gamma$. We obtain
\begin{equation}
\label{eq.sigmaappears}
\left|\de_n u (x) - Q(z) \big((x-z)\cdot\nu(z)\big)^{\tilde\gamma(z)}_+\right| \leq C|x-z|^{\tilde\gamma(z) + \sigma},
\end{equation}
for some $\sigma > 0$, and some constant $C$ independent of $z$.
\\
{\it Step 1: Q is continuous and positive at the origin.} Let us first check that $Q$ is a continuous function on the free boundary at $0$. Indeed, suppose it is not continuous, so that there exists a sequence $z_k \to 0$ on the free boundary such that $\lim_{k\to\infty} Q(z_k) = \bar Q \neq Q(0)$. Then, we have
\[
\left|\de_n u (x) - Q(z_k) \big((x-z_k)\cdot\nu(z_k)\big)^{\tilde\gamma(z_k)}_+\right| \leq C|x-z_k|^{\tilde\gamma(z_k) +\sigma}.
\]

Thus, taking limits as $k \to \infty$, for any fixed $x$, we obtain
\[
\left|\de_n u (x) - \bar Q (x_n)^{\tilde\gamma(0)}_+\right| \leq C|x|^{\tilde\gamma(0)+\sigma}.
\]
We have used here that $\nu$ and $\tilde\gamma$ are continuous. On the other hand, we had
\[
\left|\de_n u (x) - Q(0) (x_n)^{\tilde\gamma(0)}_+\right| \leq C|x|^{\tilde\gamma(0)+\sigma},
\]
so that
\[
|\bar Q -  Q(0)| (x_n)^{\tilde\gamma(0)}_+ \leq C|x|^{\tilde\gamma(0)+\sigma}.
\]
Now take $x = (0,t)\in \R^{n-1}\times\R$ for $t\in\R^+$ and let $t\to 0$. It follows $\bar Q = Q(0)$, a contradiction; i.e., $Q$ is continuous at 0.

We now prove that $Q(0) > 0$ (notice that we already know that $Q(0) \geq 0$ because $u \geq 0$). To do so, we proceed by creating an appropriate subsolution using Lemma~\ref{lem.subsolution}.

First of all, consider a fixed bounded strictly convex $C^{1,\alpha}$ domain $P\subset\{u > 0\}$ touching the free boundary at 0, similar to the domains considered in the proof of Proposition~\ref{prop.boundestimatesg}. Suppose that $P$ has diameter less than 1, and take an $h > 0$ such that, if we denote $\nu_P(z)$ the normal vector to $\de P$ pointing towards the interior of $P$ at $z\in \de P$, then
\[
\tilde{\gamma}_h := \max\left\{\gamma\left(\frac{b\cdot\nu_P(z)}{\chi(\nu_P(z))}\right) \quad \textrm{for}\quad z\in \de P\cap \{x_n < h\} \right\}\leq \tilde\gamma(0) + \frac{\sigma}{4},
\]
where $\sigma$ is the small constant following from Theorem~\ref{thm.expansion_} that appears in \eqref{eq.sigmaappears}.
Let us call
\[
\eta_\nu^{(h)} := \tilde{\gamma}_h-\tilde\gamma(0) \geq 0
\]
Such $h >0$ exists because $P$ is $C^{1,\alpha}$, and $\gamma$ and $\chi$ are continuous. Take now $\kappa =  \tilde\gamma(0) + 3\eta_\nu^{(h)}$, and let $\varrho$ be a regularised distance to $\R^n\setminus P$ as in Definition~\ref{defi.rendist}. In particular, $\varrho \equiv 0$ in $\R^n \setminus P$. We will see that $\phi := \varrho^\kappa \leq C\de_n u$ for an appropriate~$C$.

By Lemma~\ref{lem.subsolution} used in $B_h$ we get that for some constant $\delta_0 < h/2$,
\[
(-L+b\cdot\nabla )\phi \leq -1  \quad \textrm{in}\quad B_{h/2}\cap \{x : 0<d(x, \R^n\setminus P) \leq \delta_0\}.
\]

Now, since $P$ is strictly convex, we have that there exists some $\delta_P$ with $0< \delta_P\leq \delta_0$ such that
\[
(-L+b\cdot\nabla )\phi \leq -1  \quad \textrm{in}\quad \{0<x_n <\delta_P\}\cap P.
\]

Now consider $v_r$ as the one defined in Proposition~\ref{prop.regpt} (there it is called $v$),
\[
v_r(x) = \frac{u(rx)}{r\|\nabla u \|_{L^\infty(B_r)}}.
\]

By the same reasoning as in the proof of Proposition~\ref{prop.fblip} rescaling to a larger ball we have that
\[
\tilde{w}_r = C_1 (\de_n v_r)\chi_{B_2} \geq 0
\]
for $r$ small enough.

From Proposition~\ref{prop.regpt} we can choose $r$ small enough so that for some positive constant $c$,
\[
\tilde{w}_{r} > c > 0\quad \textrm{in}\quad P\cap \{x_n \geq \delta_P\}.
\]

Moreover, also proceeding as in the proof of Proposition~\ref{prop.fblip}, $(-L+b\cdot \nabla)\tilde{w}_r > -\eta$ in $B_{1}\cap \{v_r > 0\}$ for some arbitrarily small constant $\eta$, making $r$ even smaller if necessary. Thus, we can assume
\[
(-L+b\cdot \nabla)\tilde{w}_r > -\frac{\tilde{c}}{2}\quad\textrm{in}\quad B_{1}\cap \{v_r > 0\},
\]
for some $0<\tilde{c}< c$ to be chosen later.

Now compare the functions $\phi$ and $\tilde{c}^{-1}\tilde{w}_r$. Notice that in $\R^n\setminus P$, $\tilde{w}_r \geq \phi \equiv 0$. In $P\cap \{x_n \geq \delta_P\}$, $\tilde{c}$ can be chosen small enough depending on $\delta_P$ and $P$ so that $\tilde{c}^{-1} \tilde{w}_r \geq \phi$ there, because $\tilde{w}_{r} > c > 0$ in $P\cap \{x_n \geq \delta_P\}$. Finally,
\[
(-L+b\cdot\nabla )\phi \leq (-L+b\cdot\nabla )\tilde{w}_r   \quad \textrm{in}\quad \{0<x_n <\delta_P\}\cap P.
\]

Thus, by the maximum principle, for this particular $r$ fixed we have that $\tilde{w}_r \geq \tilde{c}\phi$. Going back to the definition of $\tilde{w}_r$, this means that for some $\rho$ and $c$ positive constants
\[
\de_n u(te_n) \geq c \varrho(te_n)\quad \textrm{for}\quad 0<t<\rho.
\]
For $\rho$ small enough, $\varrho$ is comparable to $(x_n)_+^\kappa$ along the segment $te_n$, so that we actually have
\begin{equation}
\label{eq.orddn}
\de_n u(te_n) \geq c t^\kappa\quad \textrm{for}\quad 0<t<\rho.
\end{equation}
Now, if $Q(0) = 0$ then
\[
|\de_n u (x) |\leq C|x|^{\tilde\gamma(0) + \sigma}.
\]
Since $\kappa < \tilde\gamma(0)+\sigma$ we get a contradiction with \eqref{eq.orddn}. Thus, $Q(0) > 0$.
\\[0.3cm]
{\it Step 2: Conclusion of the proof.} For $z\in \Gamma\cap B_r$ for $r$ small enough we have that $Q(z) > 0$, because $Q$ is continuous and $Q(0) > 0$. In particular,
\[
\left|\de_n u (x) - Q(z) \big((x-z)\cdot\nu(z)\big)^{\tilde\gamma(z)}_+\right| \leq C|x-z|^{\tilde\gamma(z)+\sigma}.
\]

By taking $x = z+te_n$ for $t > 0$ we get
\[
\left|\de_n u (z+te_n) - Q(z) \big(\nu_n(z) t\big)^{\tilde\gamma(z)}_+\right| \leq Ct^{\tilde\gamma(z)+\sigma}.
\]

Integrating with respect to $t$ from $0$ to $t'<1$, using that $\de_nu (z) = 0$ and $\nu_n(z) > 1/2$ for $r$ small enough and recalling that $Q(z)> 0$, we get
\[
u (z+t'e_n) \geq ct'^{1+\tilde\gamma(z)} > 0 ,
\]
so that in particular, $z$ is a regular point; i.e., the set of regular points is relatively open. Doing the same for $z = 0$ we get one of the inequalities from \eqref{eq.ur},
\begin{equation}
\label{eq.ur2}
\sup_{B_r} u \geq c r^{1+\tilde\gamma(0)} >0 \quad\textrm{for all}\quad r\leq 1.
\end{equation}

On the other hand, we can also find the expansion at 0 for $\de_i u$ for any $i\in \{1,\dots,n\}$,
\[
\left|\de_i u (x) - Q_i (x_n)^{\tilde\gamma(0)}_+\right| \leq C|x|^{\tilde\gamma(0)+\sigma}.
\]
Therefore,
\[
|\nabla u (x)|\leq C\left( |x|^{\tilde\gamma(0)} + |x|^{\tilde\gamma(0)+\sigma}\right).
\]
Integrating, and using $\nabla u(0) = 0$
\[
u (x)\leq C\left( |x|^{1+\tilde\gamma(0)} + |x|^{1+\tilde\gamma(0)+\sigma}\right),
\]
i.e.,
\[
\sup_{B_r} u \leq Cr^{1+\tilde\gamma(0)}\quad\textrm{for all}\quad r\leq 1.
\]
Thus, combined with \eqref{eq.ur2}, this proves \eqref{eq.ur}.
\end{proof}

\begin{prop}
\label{prop.regopen2}
Let $L$ be an operator of the form \eqref{eq.L}-\eqref{eq.L.cond}, and let $b\in \R^n$. Let $u$ be a solution to \eqref{eq.pb}-\eqref{eq.pb2}-\eqref{eq.pb3} and let $x_0$ be a free boundary regular point. Then
\begin{equation}
u(x) = c_0\big((x-x_0)\cdot \nu(x_0)\big)^{1+\tilde\gamma(x_0)}_+ + o\left(|x-x_0|^{1+\tilde\gamma(x_0)+ \sigma}\right)
\end{equation}
with $c_0 > 0$ and for some $\sigma > 0$. Here $\tilde\gamma(x_0)$ is given by \eqref{eq.tildegamma}, with $\nu(x_0)$ being the normal vector to the free boundary at $0$ pointing towards $\{u > 0\}$; and $\sigma$ depends only on $n$, the ellipticity constants, and $\|b\|$.
\end{prop}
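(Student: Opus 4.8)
The plan is to reduce the statement to the boundary expansion of the single derivative $\de_n u$ (already available from Theorem~\ref{thm.expansion_} / Proposition~\ref{prop.regopen}) and then integrate in the normal direction. Assume without loss of generality that $x_0 = 0$ and $\nu(0) = e_n$. By Proposition~\ref{prop.C1sigma} the free boundary near $0$ is a $C^{1,\alpha}$ graph, say $\Gamma\cap B_{r_0} = \{x_n = g(x')\}\cap B_{r_0}$ with $g\in C^{1,\alpha}$, $g(0)=0$, $\nabla g(0)=0$, so that $|g(x')|\le C|x'|^{1+\alpha}$; write $U^+ = \{x_n > g(x')\}\cap B_{r_0}$ for the positivity set. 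Since $u\equiv 0$ on the open set $\{x_n<g(x')\}\cap B_{r_0}$, continuity of $\nabla u$ gives that $\nabla u$, hence $\de_n u$, vanishes on $B_{r_0}\setminus U^+$; moreover $(-L+b\cdot\nabla)\de_n u = \de_n f\in L^\infty$ in $U^+$. Thus Theorem~\ref{thm.expansion_} applies to $\de_n u$ in the $C^{1,\alpha}$ domain $U^+$, and together with Step~1 of the proof of Proposition~\ref{prop.regopen} (positivity of $Q(0)$) it yields, after rescaling to $B_1$, a constant $q := Q(0)>0$ with
\[
\big|\de_n u(x) - q\,(x_n)_+^{\tilde\gamma}\big|\le C|x|^{\tilde\gamma+\sigma}\qquad\textrm{for all}\quad x\in B_1,
\]
where $\tilde\gamma := \tilde\gamma(0)$. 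Shrinking $\sigma>0$ if necessary (this is harmless and does not affect the claimed dependencies) we may assume in addition $\sigma\le\alpha(1+\tilde\gamma)$.

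Next I would integrate this expansion in the $e_n$ direction, starting from the free boundary. For $x=(x',x_n)\in U^+$ the whole segment $\{(x',t): g(x')\le t\le x_n\}$ lies in $\overline{U^+}\cap B_1$ for $|x|$ small, so, using $u(x',g(x')) = 0$,
\[
u(x) = \int_{g(x')}^{x_n}\de_n u(x',t)\,dt = q\int_{g(x')}^{x_n}(t)_+^{\tilde\gamma}\,dt + \int_{g(x')}^{x_n} E(x',t)\,dt ,
\]
where $E$ is the remainder, with $|E(x',t)|\le C|(x',t)|^{\tilde\gamma+\sigma}\le C|x|^{\tilde\gamma+\sigma}$ on the range of integration since $|t|\le\max\{|x_n|,|g(x')|\}\le C|x|$ there. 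The error integral is bounded by $C|x|^{\tilde\gamma+\sigma}(x_n-g(x'))\le C|x|^{1+\tilde\gamma+\sigma}$, and the main integral equals $\tfrac{q}{1+\tilde\gamma}\big[(x_n)_+^{1+\tilde\gamma} - (g(x'))_+^{1+\tilde\gamma}\big]$, with $(g(x'))_+^{1+\tilde\gamma}\le C|x'|^{(1+\alpha)(1+\tilde\gamma)}\le C|x|^{1+\tilde\gamma+\sigma}$ by the choice of $\sigma$. Hence, with $c_0 := q/(1+\tilde\gamma)>0$,
\[
u(x) = c_0\,(x_n)_+^{1+\tilde\gamma} + O\big(|x|^{1+\tilde\gamma+\sigma}\big)\qquad\textrm{for}\quad x\in U^+ .
\]

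Finally I would check the same bound on the coincidence side, which is immediate: if $x\notin U^+$ then $u(x)=0$, while $(x_n)_+^{1+\tilde\gamma}$ is either $0$ (when $x_n\le 0$) or, when $0<x_n\le g(x')$, bounded by $|g(x')|^{1+\tilde\gamma}\le C|x'|^{(1+\alpha)(1+\tilde\gamma)}\le C|x|^{1+\tilde\gamma+\sigma}$; so $|u(x)-c_0(x_n)_+^{1+\tilde\gamma}|\le C|x|^{1+\tilde\gamma+\sigma}$ as well. Combining the two cases and recalling that $(x_n)_+ = \big((x-x_0)\cdot\nu(x_0)\big)_+$ in the original coordinates gives the asserted expansion (indeed with $O(\cdot)$, hence also with $o(|x-x_0|^{1+\tilde\gamma(x_0)+\sigma/2})$). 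The constant $c_0$ is positive since $q>0$, and the dependence of $\sigma$ reduces to that in Theorem~\ref{thm.expansion_} together with that of $\alpha$ in Proposition~\ref{prop.C1sigma}, i.e.\ on $n$, the ellipticity constants, and $\|b\|$. The only nontrivial ingredient is the boundary expansion of $\de_n u$ supplied by Theorem~\ref{thm.expansion_} and the nondegeneracy $q>0$; the present argument is just a one–dimensional integration, whose only subtle point is absorbing the $C^{1,\alpha}$ graph height $|g(x')|^{1+\tilde\gamma}$ into the remainder, which is what imposes the (harmless) restriction $\sigma\le\alpha(1+\tilde\gamma)$.
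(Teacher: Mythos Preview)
Your proof is correct and follows the same overall strategy as the paper: apply Theorem~\ref{thm.expansion_} to a derivative of $u$ and then integrate. The one difference is the choice of integration path. The paper uses the expansions of \emph{all} partial derivatives $\de_i u(x)=Q_i(x_n)_+^{\tilde\gamma}+o(|x|^{\tilde\gamma+\sigma})$ and integrates first along the tangential segment from $0$ to $(x',0)$ (where the leading term vanishes since $x_n=0$) and then along the normal segment from $(x',0)$ to $(x',x_n)$; this avoids any explicit appearance of the graph $g$. You instead use only the expansion of $\de_n u$ and integrate along the single normal segment from the free boundary point $(x',g(x'))$ to $(x',x_n)$, which is slightly more economical but forces you to absorb the graph-height contribution $(g(x'))_+^{1+\tilde\gamma}\le C|x'|^{(1+\alpha)(1+\tilde\gamma)}$ into the remainder --- hence your harmless extra restriction $\sigma\le\alpha(1+\tilde\gamma)$. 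Both routes are equally valid and of the same length.
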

\begin{proof}
Assume that $x_0 = 0$ and $\nu(x_0) = e_n$. From the expansions in the proof of Proposition~\ref{prop.regopen} we have
\begin{equation}
\label{eq.intt}
\de_i u(x) = Q_i(x_n)^{\tilde\gamma(0)}_+ + o\left(|x|^{\tilde\gamma(0)+\sigma}\right),
\end{equation}
for some $Q_i$, with $Q_n > 0$, and $\sigma > 0$. Now, let $x = (x',x_n)$, with $x' \in \R^{n-1}$ and $x_n \in \R$. Integrating the expression \eqref{eq.intt} in the segment with endpoints $0$ and $(x',0)$ we get
\[
u(x',0) = o\left(|x|^{1+\tilde\gamma(0)+\sigma}\right).
\]
Then, integrating in the segment with endpoints $(x',0)$ and $(x',x_n)$ we find
\[
u(x',x_n) = \frac{Q_n}{1+\tilde\gamma(0)}(x_n)^{1+\tilde\gamma(0)}_+ + o\left(|x|^{1+\tilde\gamma(0)+\sigma}\right).
\]
Thus, \eqref{eq.ur2} is proved.
\end{proof}

We finally can put all elements together to prove our main results, Theorems~\ref{thm.1} and \ref{thm.2}.

\begin{proof}[Proof of Theorem~\ref{thm.2}]
After subtracting the obstacle and dividing by a constant, we can assume $u$ is a solution to \eqref{eq.pb}-\eqref{eq.pb2}-\eqref{eq.pb3}. Then the result we want is a combination of Propositions~\ref{prop.C1sigma}, \ref{prop.regopen}, and \ref{prop.regopen2}.
\end{proof}

\begin{proof}[Proof of Theorem~\ref{thm.1}]
It is a particular case of Theorem~\ref{thm.2}; we only need to check that $\chi \equiv 1$. For this, notice that the kernel is constant and given by $\mu(\theta) = c_{n,1/2}$, where the constant $c_{n,s}$ is the one appearing in the definition of fractional Laplacian,
\[
c_{n,s} := \left(\int_{\R^n} \frac{1-\cos(x_1)}{|x|^{n+2s}}dx\right)^{-1};
\]
see for example \cite{DPV12}. Thus, the value of $\chi$ for $(-\Delta)^{1/2}$ is
\[
\chi(e) = \frac{\pi c_{n,1/2}}{2}\int_{\Sp^{n-1}} |\theta\cdot e| d\theta.
\]

Notice that, by changing variables to polar coordinates,
\begin{align*}
c_{n,1/2}^{-1} = \int_{\R^n} \frac{1-\cos(x_1)}{|x|^{n+1}}dx = \int_{\Sp^{n-1}}\int_0^\infty \frac{1-\cos(r\theta_1)}{r^2} dr d\theta = \frac{\pi}{2}\int_{\Sp^{n-1}}|\theta_1|d\theta,
\end{align*}
where we have used that $\int_0^\infty (1-\cos(t))t^{-2} dt = \pi/2$. This immediately yields that $\chi \equiv 1$ for $(-\Delta)^{1/2}$, as desired.
\end{proof}

We next prove the almost optimal regularity of solutions. Given an operator $L$ of the form \eqref{eq.L}-\eqref{eq.L.cond}, the associated $\chi$ defined as in \eqref{eq.chi}, and $b\in \R^n$, we define
\begin{equation}
\label{eq.mingamma}
\gamma^-_{L,b} := \inf_{e\in \Sp^{n-1}} \gamma\left(\frac{b\cdot e}{\chi(e)}\right),
\end{equation}
where $\gamma$ is given by \eqref{eq.gamma}. Notice that $\gamma_{L,b}^- \in (0,1/2]$.
\begin{prop}
\label{prop.almostoptimal}
Let $L$ be an operator of the form \eqref{eq.L}-\eqref{eq.L.cond}, and let $b\in \R^n$. Let $u$ be a solution to \eqref{eq.pb}-\eqref{eq.pb2}-\eqref{eq.pb3}. Then, for any $\varepsilon > 0$,
\[
\|u\|_{C^{1,\gamma^-_{L,b}-\varepsilon}(\R^n)} \leq C_\varepsilon,
\]
where the constant $C_\varepsilon$ depends only on $n$, $L$, $b$, and $\varepsilon$. The constant $\gamma^-_{L,b}$ is given by \eqref{eq.mingamma}.
\end{prop}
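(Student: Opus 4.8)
The plan is to deduce the estimate from the already-proven dichotomy, together with the semiconvexity of $u$ and the interior estimates of Proposition~\ref{prop.intest}; no new blow-up or monotonicity argument is needed. As usual we may assume $u$ solves \eqref{eq.pb}-\eqref{eq.pb2}-\eqref{eq.pb3}, so in particular $D^2u\ge-{\rm Id}$, $\|u\|_{L^\infty(\R^n)}\le1$, $\|\nabla u\|_{L^\infty(\R^n)}\le1$ and $\|f\|_{C^1(\R^n)}\le1$. Fix $\varepsilon>0$ small (so that $\varepsilon<\gamma^-_{L,b}$) and write $\beta:=\gamma^-_{L,b}-\varepsilon$; by \eqref{eq.mingamma} we have $\gamma^-_{L,b}\in(0,\tfrac12]$, hence $\beta\in(0,1)$ and $\beta$ stays bounded away from $0$ and $1$. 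The first step is to show that $\sup_{B_r(x_0)}u\le C_\varepsilon r^{1+\beta}$ for every $x_0\in\de\{u>0\}$ and $r\in(0,1)$: if $x_0$ is regular this is the upper bound of Proposition~\ref{prop.regopen} (whose constant is universal) combined with $\tilde\gamma(x_0)\ge\gamma^-_{L,b}>\beta$, while if $x_0$ is not regular the dichotomy gives $\sup_{B_r(x_0)}u\le C_\varepsilon r^{2-\varepsilon}\le C_\varepsilon r^{1+\beta}$, using $\gamma^-_{L,b}\le\tfrac12<1$ and $r<1$. So every free boundary point enjoys the same growth.

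Next I would upgrade this to a growth bound for the gradient, $|\nabla u(x)|\le C_\varepsilon\big({\rm dist}(x,\{u=0\})\big)^\beta$, using only the previous step and semiconvexity. Indeed $\nabla u\equiv0$ on the closed set $\{u=0\}$ (since $u\ge0$ vanishes there and $u\in C^1$), and for $x\in\{u>0\}$ with $d_x:={\rm dist}(x,\{u=0\})<\tfrac12$, pick a nearest point $\bar x\in\de\{u>0\}$ and any $e\in\Sp^{n-1}$; the function $t\mapsto u(x+te)+\tfrac12 t^2$ is convex, hence $u(x+te)\ge u(x)+\de_eu(x)\,t-\tfrac12 t^2$. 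Evaluating at $t=\pm d_x$, using $u\ge0$ and $u(x\pm d_xe)\le\sup_{B_{2d_x}(\bar x)}u\le C_\varepsilon(2d_x)^{1+\beta}$ from the first step, one obtains $|\de_eu(x)|\le C_\varepsilon d_x^{\beta}$ for every $e$, i.e. $|\nabla u(x)|\le C_\varepsilon d_x^\beta$. (For $d_x\ge\tfrac12$ one just uses $\|\nabla u\|_{L^\infty(\R^n)}\le1$.)

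Finally I would combine these two facts with the interior estimates. For $x\in\{u>0\}$ with $d_x<\tfrac12$ apply Proposition~\ref{prop.intest}, rescaled to $B_{d_x}(x)\subset\{u>0\}$, to $\nabla u$, which solves $(-L+b\cdot\nabla)\nabla u=\nabla f$; choose the parameter in Proposition~\ref{prop.intest} to be a small $\varepsilon_0<1-\beta$. The zeroth-order term is controlled by $\|\nabla u\|_{L^\infty(B_{d_x/2}(x))}\le C_\varepsilon d_x^\beta$ from the previous step, and the crucial point is that the nonlocal tail $\int_{\R^n}|\nabla u(x+d_xy)|(1+|y|^{n+1})^{-1}\,dy$ is also $\le C_\varepsilon d_x^\beta$: using $|\nabla u(x+d_xy)|\le C_\varepsilon\big(d_x(1+|y|)\big)^\beta$ when $d_x(1+|y|)<1$ and $|\nabla u|\le1$ otherwise, the tail is bounded by $C_\varepsilon d_x^\beta\int_1^\infty r^{\beta-2}\,dr+C_\varepsilon d_x<\infty$ precisely because $\beta<1$. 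This yields $[\nabla u]_{C^{1-\varepsilon_0}(B_{d_x/4}(x))}\le C_\varepsilon d_x^{\beta-1+\varepsilon_0}$, and interpolating with $\|\nabla u\|_{L^\infty(B_{d_x/4}(x))}\le C_\varepsilon d_x^\beta$ (here $1-\varepsilon_0-\beta>0$) gives $[\nabla u]_{C^{\beta}(B_{d_x/4}(x))}\le C_\varepsilon$. A routine covering argument then finishes: given $x,y$ with $d_x\ge d_y$, if $|x-y|\le d_x/4$ one uses this last bound, whereas if $|x-y|>d_x/4$ one uses $|\nabla u(x)-\nabla u(y)|\le|\nabla u(x)|+|\nabla u(y)|\le C_\varepsilon d_x^\beta\le C_\varepsilon(4|x-y|)^\beta$ (with $\nabla u\equiv0$ on $\{u=0\}$ when $d_y=0$); points far from $\{u=0\}$ are handled directly by interior estimates and $\|\nabla u\|_{L^\infty(\R^n)}\le1$. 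Together with $\|u\|_{L^\infty(\R^n)}\le1$ this gives $\|u\|_{C^{1,\beta}(\R^n)}\le C_\varepsilon$, with $C_\varepsilon$ depending only on $n$, $L$, $b$, $\varepsilon$.

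The conceptual content is entirely in the first step, which is essentially free from Proposition~\ref{prop.regopen} and the dichotomy; the technical heart is the last step, where one must run Proposition~\ref{prop.intest} at the scale $d_x$ and bound the nonlocal tail by $C_\varepsilon d_x^\beta$ rather than by a constant. The fact that this works exactly because $\beta<1$ (equivalently $\gamma^-_{L,b}<1$, which always holds) is what makes the scaling close, and also explains why one cannot expect better than almost-$C^{1,\gamma^-_{L,b}}$ regularity from this argument.
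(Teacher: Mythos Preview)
Your Steps~2 and~3 are essentially correct and mirror the paper's own Step~2 (combining growth at the free boundary with the rescaled interior estimate of Proposition~\ref{prop.intest}); the semiconvexity trick you use to pass from growth of $u$ to growth of $\nabla u$ is a clean substitute for what the paper obtains directly.

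The gap is in Step~1. For a \emph{non-regular} point $x_0$, case~(ii) of the dichotomy is nothing but the negation of the definition of regularity: it says that for each fixed $\varepsilon$ the quantity $\sup_{r\in(0,1)} r^{\varepsilon-2}\|u\|_{L^\infty(B_r(x_0))}$ is finite, but gives no control on \emph{how} finite --- the bound $C_\varepsilon$ may depend on $x_0$. (It is a $\sup$ of continuous functions of $x_0$, hence only lower semicontinuous, so compactness of the free boundary does not help.) Since you need Step~1 uniformly over all free boundary points to run Steps~2 and~3, this breaks the argument. A similar worry applies, in principle, to regular points: Proposition~\ref{prop.regopen} asserts universal constants, but its proof passes through Theorem~\ref{thm.expansion_}, whose constants depend on the local $C^{1,\alpha}$ norm of the free boundary near $x_0$, which is not a priori uniformly controlled.

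The paper avoids both issues by not invoking the dichotomy at all: its Step~1 is a self-contained compactness/blow-up argument establishing directly that $\sup_{r>0} r^{-\gamma_\varepsilon}\|\nabla u\|_{L^\infty(B_r(x_0))}\le C$ with a constant independent of $x_0$. One assumes a sequence violating the bound, rescales, passes to a limit using Proposition~\ref{prop.reg.u}, and lands on a nontrivial convex global solution; Theorem~\ref{thm.clas} then forces a homogeneity strictly larger than $\gamma_\varepsilon$, contradicting the growth control on the limit. Once this uniform gradient growth is in hand, the conclusion follows exactly as in your Step~3.
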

\begin{proof}
In order to prove the bound we first check the growth of the solution at the free boundary, and then we combine it with interior estimates.

For simplicity, we will denote $\gamma_\varepsilon = \gamma_{L,b}^- - \varepsilon$.

{\it Step 1: Growth at the free boundary.} We first prove that, if 0 is a free boundary point, then
\begin{equation}
\label{eq.firststep}
\sup_{r > 0} \frac{\|\nabla u\|_{L^\infty(B_r)}}{r^{\gamma_\varepsilon}} \leq C,
\end{equation}
for some constant $C$ depending only on $n$, $L$, $b$, and $\varepsilon$.

We proceed by contradiction, using a compactness argument. Suppose that it is not true, so that there exists a sequence of functions $u_k$, $f_k$, with $\|u_k\|_{C^{1,\tau}}\leq 1$ for some $\tau > 0$ fixed and $\|f_k\|_{C^1(\R^n)}\leq 1$, such that
\begin{equation}
\left\{\begin{array}{rcll}
u_k & \geq &  0 & \quad \textrm{in}\quad \R^n\\
(-L+b\cdot\nabla) u_k & \leq & f_k & \quad \textrm{in}\quad \R^n \\
(-L+b\cdot\nabla) u_k & = & f_k & \quad \textrm{in}\quad \{u_k>0\} \\
D^2u_k & \geq & -1 & \quad \textrm{in} \quad \R^n,\\
\end{array}\right.
\end{equation}
but $u_k$ are such that
\[
\theta(r) := \sup_{i}\, \sup_{r' > r}\,\,(r')^{-\gamma_\varepsilon} \|\nabla u_k\|_{L^\infty(B_{r'})} \to \infty \quad\textrm{as}\quad r\downarrow 0.
\]

Notice that for $r > 0$, $\theta(r) < \infty$ and that $\theta$ is a monotone function, with $\theta(r) \to \infty$ as $r \downarrow 0$.
Now take sequences $r_m \downarrow 0$ and $i_m$ such that
\[
r_m^{-\gamma_\varepsilon}\|\nabla u_{i_m}\| \geq \frac{\theta(r_m)}{2},
\]
and define the functions
\[
v_m(x) := \frac{u_{i_m}(r_m x)}{r_m^{1+\gamma_\varepsilon}\theta(r_m)}.
\]

Notice that
\begin{equation}
\label{eq.nondegfin}
\|\nabla v_m\|_{L^\infty(B_1)} \geq \frac{1}{2},
\end{equation}
and
\begin{equation}
\label{eq.clasfin}
D^2 v_m \geq -\frac{r_m^{1-\gamma_\varepsilon}}{\theta(r_m)}\quad\textrm{in}\quad \R^n,\quad\quad |(L+b\nabla)(\nabla v_m)| \leq \frac{r_m^{1-\gamma_\varepsilon}}{\theta(r_m)}\quad\textrm{in}\quad \{v_m > 0\}.
\end{equation}

On the other hand,
\begin{equation}
\label{eq.grofin}
\|\nabla v_m\|_{L^\infty(B_R)} = \frac{\|\nabla u_{i_m}\|_{L^\infty(B_{Rr_m})}}{r_m^{\gamma_\varepsilon}\theta(r_m)}  \leq R^{\gamma_\varepsilon} \frac{\theta(Rr_m)}{\theta(r_m)} \leq R^{\gamma_\varepsilon}\quad\textrm{for}\quad R \geq 1.
\end{equation}

Therefore, noticing that $r_m^{1-\gamma_\varepsilon}/\theta(r_m) \to 0$ as $m\to \infty$, we can apply Proposition~\ref{prop.reg.u} to deduce that, for some $\tau > 0$ independent of $m$,
\[
\|v_m\|_{C^{1,\tau}(B_R)}\leq C(R),
\]
for some constant depending on $R$, $C(R)$. Let us take limits as $m\to \infty$. By Arzelà-Ascoli, $v_m$ converges, up to taking a subsequence, in $C^{1}_{\rm loc}(\R^n)$ to some $v_\infty$. By taking to the limit the properties \eqref{eq.clasfin}-\eqref{eq.grofin} we reach that $v_\infty$ should be a convex global solution. By the classification theorem, Theorem~\ref{thm.clas}, we have that either $v\equiv 0$
\[
v_\infty(x) = C(e\cdot x)_+^{1+\gamma(b\cdot e/\chi(e))}\quad\textrm{for some}\quad e\in \Sp^{n-1},
\]
where $\gamma$ and $\chi$ are given by \eqref{eq.gamma}-\eqref{eq.chi}. Notice, however, that taking \eqref{eq.grofin} to the limit, $v_\infty$ grows at most like $\gamma_\varepsilon$, and by definition $\gamma(b\cdot e/\chi(e)) >  \gamma_\varepsilon$. Therefore, we must have $v_\infty \equiv 0$. But this is a contradiction with \eqref{eq.nondegfin}  in the limit. Therefore, we have proved \eqref{eq.firststep}.

{\it Step 2: Conclusion.} Let us combine the previous growth with interior estimates to obtain the desired result.

Let $x, y\in \R^n$, let $r = |x-y|$ and $R = {\rm dist}(x, \{u = 0\})$. We want to prove that for some constant $C_\varepsilon$ then
\[
|\nabla u(x)-\nabla u(y)|\leq Cr^{\gamma_\varepsilon}.
\]

Without loss of generality and by the growth found in the first step we can assume that $x, y\in \{u > 0\}$. Let $\bar x\in \de\{u = 0\}$ be such that ${\rm dist}(\bar x, x) = R$. We separate two cases:
\begin{enumerate}[$\bullet~~$]
\item If $4r > R$,
\begin{align*}
|\nabla u (x)-\nabla u(y)|& \leq |\nabla u (x)-\nabla u(\bar x)|+|\nabla u (\bar x)-\nabla u(y)|  \\
& \leq C\big(R^{\gamma_\varepsilon} + (R+r)^{\gamma_\varepsilon}\big) \leq Cr^{\gamma_\varepsilon},
\end{align*}
where we have used the growth found in Step 1.
\item If $4r \leq R$, then $x, y \in B_{R/2}(x)$, and $B_R(x)\subset \{u > 0\}$. Notice that we have
\[
(-L+b\cdot\nabla)(\nabla u) = \nabla f\quad\textrm{in}\quad B_R(x).
\]
From the interior estimates in Proposition~\ref{prop.intest} rescaled,  we have
\[
R^{\gamma_\varepsilon} [\nabla u]_{C^{\gamma_\varepsilon} (B_{R/2}(x))} \leq C\left(R \|\nabla f\|_{L^\infty(B_R(x))} + \|\nabla u\|_{L^\infty(B_R(x))}+ \int_{\R^n} \frac{|\nabla u (Rx)|}{1+|x|^{n+1}}\right).
\]

Now notice that thanks to the growth found in Step 1 we have, on the one hand,
\[
\|\nabla u\|_{L^\infty(B_R(x))} \leq CR^{\gamma_\varepsilon},
\]
and on the other hand,
\[
\int_{\R^n} \frac{|\nabla u (Rx)|}{1+|x|^{n+1}} \leq R^{\gamma_\varepsilon} \int_{\R^n} \frac{|x|^{\gamma_\varepsilon}}{1+|x|^{n+1}} = CR^{\gamma_\varepsilon},
\]
so that putting all together and using $\|\nabla f\|_{L^\infty(\R^n)} \leq 1$, it yields,
\[
[\nabla u]_{C^{\gamma_\varepsilon} (B_{R/2}(x))} \leq C\left(1+R^{1-\gamma_\varepsilon}\right).
\]
Thus, if $R \leq 4$ we are done. Now suppose $R > 4$. If $r < 1$, by applying interior estimates to $B_1(x)$ we are done. If $r \geq 1$, we are also done, because $|\nabla u(x)-\nabla u(y)|\leq 2\|\nabla u\|_{L^\infty(\R^n)} \leq C$.
\end{enumerate}
Thus, we have reached the desired result.
\end{proof}

As a consequence, we have the following immediate corollary.

\begin{cor}
\label{cor.2}
Let $L$ be an operator of the form \eqref{eq.L}-\eqref{eq.L.cond}, and let $b\in \R^n$. Let $u$ be the solution to \eqref{eq.obstpb} for a given obstacle $\varphi$ of the form \eqref{eq.obst}. Then, for any $\varepsilon > 0$,
\[
\|u\|_{C^{1,\gamma_{L,b}^- - \varepsilon}(\R^n)}\leq C_\varepsilon,
\]
where $C_\varepsilon$ depends only on $n$, $L$, $b$, $\varepsilon$, and $\|\varphi\|_{C^{2,1}(\R^n)}$. The constant $\gamma_{L,b}^-$ is given by \eqref{eq.mingamma}.
\end{cor}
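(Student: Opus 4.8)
The plan is to deduce Corollary~\ref{cor.2} from Proposition~\ref{prop.almostoptimal} by reversing the normalisation performed at the beginning of Section~\ref{sec.5}, keeping careful track of how the constants depend on $\|\varphi\|_{C^{2,1}(\R^n)}$. Throughout, write $\gamma_\varepsilon:=\gamma^-_{L,b}-\varepsilon$; recall $\gamma^-_{L,b}\in(0,\tfrac12]$, so $\gamma_\varepsilon<1$.

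\emph{Step 1 (normalisation).} Let $u$ solve \eqref{eq.obstpb}. By Lemma~\ref{lem.basic}, $u$ is bounded, globally Lipschitz and semiconvex, with all the relevant norms controlled by $\|\varphi\|_{C^{2,1}(\R^n)}$; the same then holds for $w:=u-\varphi\geq 0$. In $\{w>0\}$ we have $(-L+b\cdot\nabla)u=0$, hence $(-L+b\cdot\nabla)w=-(-L+b\cdot\nabla)\varphi$, and since $L$ has order $1$ and $\varphi\in C^{2,1}(\R^n)$, $\|(-L+b\cdot\nabla)\varphi\|_{C^1(\R^n)}\leq C(n,\lambda,\Lambda,\|b\|)\|\varphi\|_{C^{2,1}(\R^n)}$. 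Moreover $|\nabla u|$ is bounded, so Proposition~\ref{prop.reg.u}, applied on a covering of $\R^n$ by unit balls together with the translation invariance of $-L+b\cdot\nabla$, gives $\|u\|_{C^{1,\tau}(\R^n)}\leq C(n,\lambda,\Lambda,\|b\|)\|\varphi\|_{C^{2,1}(\R^n)}$ for some fixed $\tau>0$. Therefore, setting $M:=C_0(n,\lambda,\Lambda,\|b\|)\big(1+\|\varphi\|_{C^{2,1}(\R^n)}\big)$ with $C_0$ large enough, the function $v:=w/M$ is a solution of \eqref{eq.pb}-\eqref{eq.pb2}-\eqref{eq.pb3}, exactly as at the start of Section~\ref{sec.5}. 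Note $M<\infty$ precisely because $\varphi\in C^{2,1}(\R^n)$ is bounded with $\{\varphi>0\}\Subset\R^n$.

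\emph{Step 2 (apply Proposition~\ref{prop.almostoptimal} and undo the scaling).} Since $v$ solves \eqref{eq.pb}-\eqref{eq.pb2}-\eqref{eq.pb3}, Proposition~\ref{prop.almostoptimal} gives $\|v\|_{C^{1,\gamma_\varepsilon}(\R^n)}\leq \tilde C_\varepsilon$ with $\tilde C_\varepsilon$ depending only on $n$, $L$, $b$ and $\varepsilon$. Since $\gamma_\varepsilon<1$ we have a continuous inclusion $C^{2,1}(\R^n)\hookrightarrow C^{1,\gamma_\varepsilon}(\R^n)$ (using that $\varphi$ has compact support), so from $u=\varphi+Mv$,
\[
\|u\|_{C^{1,\gamma_\varepsilon}(\R^n)}\leq \|\varphi\|_{C^{1,\gamma_\varepsilon}(\R^n)}+M\,\|v\|_{C^{1,\gamma_\varepsilon}(\R^n)}\leq C\|\varphi\|_{C^{2,1}(\R^n)}+M\tilde C_\varepsilon=:C_\varepsilon,
\]
which has the claimed dependence, since $M$ depends only on $n$, $\lambda$, $\Lambda$, $\|b\|$ and $\|\varphi\|_{C^{2,1}(\R^n)}$.

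There is no substantial obstacle here: every step is either an application of a result proved above or a bookkeeping of constants. The only point needing a little care — and which is really the content of the statement — is verifying that the normalising constant $M$ can be chosen depending only on $n$, the ellipticity constants, $\|b\|$ and $\|\varphi\|_{C^{2,1}(\R^n)}$, i.e. that the constants in Lemma~\ref{lem.basic}, in the bound for $(-L+b\cdot\nabla)\varphi$, and in Proposition~\ref{prop.reg.u} are all of that form, and that the $C^{1,\tau}$ bound \eqref{eq.pb3} is genuinely global (this is where the global Lipschitz estimate and the translation invariance enter).
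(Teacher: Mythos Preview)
Your proof is correct and follows exactly the same approach as the paper: subtract the obstacle, divide by an appropriate constant so that \eqref{eq.pb}--\eqref{eq.pb3} hold, apply Proposition~\ref{prop.almostoptimal}, and undo the normalisation. The paper's proof is a single sentence to this effect; you have simply spelled out the bookkeeping of constants in detail (one minor imprecision: $\varphi$ itself need not have compact support, only $\{\varphi>0\}$ does, but the inclusion $C^{2,1}\hookrightarrow C^{1,\gamma_\varepsilon}$ holds anyway since $\varphi$ is bounded with bounded derivatives).
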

\begin{proof}
After subtracting the obstacle and dividing by an appropriate constant, we can apply Proposition~\ref{prop.almostoptimal} and the result follows.
\end{proof}

Finally, we prove Corollary~\ref{cor.1}.

\begin{proof}[Proof of Corollary~\ref{cor.1}]
After subtracting the obstacle and dividing by a constant, we get that this result is a particular case of Proposition~\ref{prop.almostoptimal}, but the constant $C_\varepsilon$ depends on $b$ and not only on $\|b\|$.

To prove that $C_\varepsilon$ actually depends on $\|b\|$, the proof of Proposition~\ref{prop.almostoptimal} can be rewritten by taking also sequences of vectors $b_k\in \R^n$ with $\|b_k\| = \|b\|$; by compactness, up to a subsequence they converge to some $\tilde b$ with $\|\tilde b\| = \|b\|$ and the rest of the proof is the same.
\end{proof}

\section{A nondegeneracy property}
\label{sec.9}
In the obstacle problem for the fractional Laplacian (without drift), in \cite{BFR15}, Barrios, Figalli and the second author proved a non-degeneracy condition at all free boundary points for obstacles satisfying $\Delta \varphi \leq 0$. From this, and by means of a Monneau-type monotonicity formula, they establish a global regularity result for the free boundary.

In the obstacle problem with critical drift for the fractional Laplacian we can actually find a non-degeneracy result analogous to the one found in \cite{BFR15}. In this case, however, we cannot establish regularity of the singular set, since we do not have (and do not expect) any monotonicity formula for this problem.

\begin{prop}
\label{prop.nondeg}
Let $b \in \R^n$, and suppose that $\varphi\in C^{1,1}(\R^n)$. Assume that $\varphi$ is concave in $\{\varphi > 0\}$ or, more generally, that
\[
(\Delta+\de_{bb}^2)\,\varphi \leq 0 \quad\textrm{in} \quad\{\varphi > 0\},\quad\varnothing \neq \{\varphi > 0\} \Subset \R^n.
\]
Let $u$ be a solution to the obstacle problem \eqref{eq.obstpb_st}. Then, there exist constants $c, r_0 > 0$ such that for any $x_0$ a free boundary point then
\[
\sup_{B_r(x_0)} (u-\varphi) \geq c r^2\quad\textrm{for all}\quad 0 < r < r_0.
\]
\end{prop}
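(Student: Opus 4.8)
The plan is to reduce the nondegeneracy statement to a maximum-principle argument applied to the function $w := u - \varphi$ on the noncoincidence set, exactly in the spirit of the classical obstacle-problem nondegeneracy proof of \cite{BFR15}. Write $v := u - \varphi$, so that $v \geq 0$ everywhere, $v > 0$ on $\{u > \varphi\}$, and by \eqref{eq.obstpb_st},
\[
(-\Delta)^{1/2} v + b\cdot\nabla v = -\big((-\Delta)^{1/2}\varphi + b\cdot\nabla\varphi\big) \quad \textrm{in}\quad \{u > \varphi\}.
\]
Since $\varphi$ is supported in a compact set and is $C^{1,1}$, one would like the right-hand side here to be pointwise nonnegative near the free boundary, so that $v$ is a subsolution of $(-\Delta)^{1/2} + b\cdot\nabla$ there. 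The sign hypothesis on $\varphi$ is exactly the discrete analogue of what is needed: in the local (thin-obstacle) picture $s=\tfrac12$, the operator $(-\Delta)^{1/2}$ is realised as the Dirichlet-to-Neumann map of the harmonic extension to $\R^{n+1}_+$, and $b\cdot\nabla$ becomes an oblique first-order term on the boundary. The combined operator $(-\Delta + \partial^2_{bb})$ is precisely the trace, at the level of the extension, of the relevant second-order behaviour; the assumption $(\Delta + \partial^2_{bb})\varphi \leq 0$ on $\{\varphi > 0\}$ guarantees that, after subtracting a suitable harmonic-type correction, the extension of $v$ is a subsolution of a second-order equation in the half-space with an oblique boundary condition. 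This is the structural input that makes the argument work despite the absence of any monotonicity formula.

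The key steps, in order, would be: (1) Fix a free boundary point $x_0$; by translating, assume $x_0 = 0$. Choose $r_0$ small enough that $B_{r_0} \Subset \{\varphi > 0\}$ fails to matter in the wrong direction — more precisely, small enough that on $B_{r_0}$ the contact set is nonempty and $\varphi$ retains the sign condition. (2) Pass to the Caffarelli–Silvestre extension: let $\bar v$, $\bar\varphi$ be the harmonic extensions of $v$, $\varphi$ to $\R^{n+1}_+$. Then $\bar v \geq 0$ on the slit $\{x_{n+1}=0\}$, and on $\{u > \varphi\}\cap\{x_{n+1}=0\}$ one has the oblique Neumann condition $-\partial_{x_{n+1}}\bar v + b\cdot\nabla_x \bar v = -(-\partial_{x_{n+1}}\bar\varphi + b\cdot\nabla_x\bar\varphi)$. (3) Construct a comparison function. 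The natural candidate mimicking $\|BFR15\|$ is $\bar v(x) - c\,p(x)$, where $p$ is (the extension of) a homogeneous degree-$2$ polynomial adapted to $\Delta + \partial^2_{bb}$ — concretely $p(x) = |x|^2 - $ (a multiple of $(b\cdot x)^2$ and $x_{n+1}^2$) chosen so that $(-\Delta)^{1/2}p + b\cdot\nabla p$ is a strictly negative constant on the relevant region, using the concavity-type hypothesis on $\varphi$ to absorb the sign of $(-\Delta)^{1/2}\varphi + b\cdot\nabla\varphi$. One then argues by the maximum principle on $B_r^+ := B_r(0)\cap\R^{n+1}_+$ that if $\sup_{B_r} v = \sup_{B_r^+}\bar v$ were $o(r^2)$, the function $\bar v - c p$ would be negative on the curved part of $\partial B_r^+$ while being $\geq 0$ somewhere interior (at the origin, where $p=0$ and $v=0$ but $\bar v$ is touched), forcing a contradiction with the sub-/super-solution inequalities — this is the standard iteration $\sup_{B_{r/2}} v \geq \tfrac14 \sup_{B_r} v$ minus an error, giving $\sup_{B_r} v \geq c r^2$.

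The main obstacle I anticipate is getting the oblique term $b\cdot\nabla$ to play nicely with the comparison polynomial: for the pure fractional Laplacian ($b=0$) the polynomial $|x|^2$ works because $(-\Delta)^{1/2}$ annihilates it appropriately in the extension; with a nonzero drift one must use a quadratic of the form $q(x) = |x|^2 - \lambda (b\cdot x)^2$ (and possibly a cross-term with $x_{n+1}$), and verify that $(-\Delta)^{1/2} q + b\cdot\nabla q = (\Delta + \partial^2_{bb}) q/(\text{const}) + (\text{lower order})$, so that the hypothesis $(\Delta + \partial^2_{bb})\varphi \leq 0$ is exactly what is needed to make $v + q$-type combinations subsolutions. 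Making this algebra precise — including the constant $c$ and confirming $q$ has the right sign on the curved boundary of a small half-ball, which requires $\|b\|r_0$ small — is where the real work lies, but it is a bounded computation rather than a structural difficulty. Once the comparison function is in hand, the dyadic iteration and conclusion are routine.
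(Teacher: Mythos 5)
Your proposal follows the right template (a quadratic comparison function plus the maximum principle, in the spirit of \cite{BFR15}), but it is missing the one idea that makes the argument quantitative, and the detour through the Caffarelli--Silvestre extension obscures rather than supplies it. To get $\sup_{B_r}(u-\varphi)\geq cr^2$ you need a \emph{strictly} positive lower bound of the form $(\Delta+\de_{bb}^2)(u-\varphi)\geq \bar c>0$ (or its analogue in the extension) on the noncoincidence set near the contact set. The hypothesis $(\Delta+\de_{bb}^2)\varphi\leq 0$ is non-strict and by itself yields nothing quantitative; moreover it does not control the sign of $(-\Delta)^{1/2}\varphi+b\cdot\nabla\varphi$, which is what your steps (2)--(3) would require for the oblique Neumann data of $\bar v$. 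Your phrase ``using the concavity-type hypothesis on $\varphi$ to absorb the sign of $(-\Delta)^{1/2}\varphi+b\cdot\nabla\varphi$'' is exactly where the gap sits: these are different quantities and neither bounds the other.

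The paper produces the strict constant from a genuinely nonlocal observation that your proposal never makes. Set $w:=\big((-\Delta)^{1/2}+b\cdot\nabla\big)u\geq 0$; then $w\equiv 0$ on $\{u>\varphi\}$ but $w\not\equiv 0$ globally, so at any $\bar x\in\{u>\varphi\}$ the function $w$ attains its global minimum, whence $(-\Delta)^{1/2}w(\bar x)<0$ strictly, and by compactness $\big((-\Delta)^{1/2}-b\cdot\nabla\big)w\leq -\bar c<0$ uniformly at points within distance $\bar r$ of the contact set. The operator identity $\big((-\Delta)^{1/2}-b\cdot\nabla\big)\big((-\Delta)^{1/2}+b\cdot\nabla\big)=-\Delta-\de_{bb}^2$ converts this into the local inequality $(\Delta+\de_{bb}^2)u\geq\bar c$, and only then does the hypothesis on $\varphi$ enter, giving $(\Delta+\de_{bb}^2)(u-\varphi)\geq\bar c$ near the contact set. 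From there the comparison of $u-\varphi$ with $\frac{\bar c}{2(n+\|b\|^2)}|x-\bar x|^2$ for the uniformly elliptic operator $\Delta+\de_{bb}^2$, followed by letting $\bar x\to x_0$, is the routine part you anticipated. If you insist on the extension, you would still have to import this same observation (the Neumann-type data of $u$ is a nonzero nonnegative function supported in the contact set, so its half-Laplacian is strictly negative off it), at which point the extension is an unnecessary detour: the factorization keeps the entire argument in $\R^n$.
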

\begin{proof}
Let $w := \big(	(-\Delta)^{1/2} + b\cdot \nabla\big) u$, so that $w \geq 0$. If $w\equiv 0$, by the interior estimates rescaled, and using that $u$ is globally bounded, we reach $u$ is constant. From $\lim_{|x|\to \infty} u(x) = 0$ we would get $u \equiv 0$, but this is a contradiction with $\varnothing \neq \{\varphi > 0\}$. Thus, $w\not\equiv 0$.

Notice, however, that $w\equiv 0$ in $\{u > \varphi\}$. In particular, given $\bar x\in \{u > \varphi\}$, then $\nabla w(\bar x) = 0$ and $w$ has a global minimum at $\bar x$, so that
\[
\big((-\Delta)^{1/2} - b\cdot\nabla\big) w (\bar x) = (-\Delta)^{1/2} w (\bar x) < 0.
\]

Now, noticing that $\{\varphi > 0\}\Subset\R^n$, we get that by compactness there are some $\bar c, \bar r> 0$ such that for any $\bar x\in \{u > \varphi\}$ with ${\rm dist}(\bar x, \{u = \varphi\}) \leq \bar r$ then
\[
\big((-\Delta)^{1/2} - b\cdot\nabla\big) w (\bar x) \leq -\bar c < 0.
\]

Now, since $\big((-\Delta)^{1/2} +b\cdot \nabla\big) u = w$ in $\R^n$ and from the semigroup property of the fractional Laplacian,
\[
-\Delta u - b_ib_j\de_{ij}u = \big((-\Delta)^{1/2} - b\cdot\nabla\big) w \leq -\bar c\quad\textrm{in}\quad\bar U,
\]
where $\bar U := \{u > \varphi\}\cap \{{\rm dist}(\cdot, \{u = \varphi\}) \leq \bar r \}$. Note that the operator $\Delta + b_ib_j\de_{ij}$ is uniformly elliptic, with ellipticity constants 1 and $1+\|b\|^2$.

Since $u > 0$ on the contact set, by compactness there exists some $h > 0$ such that $\varphi \geq h$ in $\{u = \varphi\}$. By continuity, there exists some $0<r_0 < \bar r/2$ such that
\[
\varphi > 0 \quad\textrm{in}\quad U_0 := \{u > \varphi\}\cap \{{\rm dist}(\cdot, \{u = \varphi\}) \leq 2r_0 \}.
\]

Now let $\bar x \in U_0$ with ${\rm dist}(\bar x, \{u = \varphi\}) \leq r_0$, and consider $r\in (0,r_0)$. From the condition on $\varphi$, $(\Delta+\de_{bb}^2) \varphi \leq 0 \textrm{ in } \{\varphi > 0\}$, we get that if $\bar u := u - \varphi$ then
\[
(\Delta+\de_{bb}^2)\,\bar u	\geq \bar c>0\quad\textrm{in}\quad \{\bar u > 0\}\cap B_r(\bar x)\subset U_0.
\]
Therefore, if we define
\[
v := \bar u - \frac{\bar c}{2(n+\|b\|^2)}|x-\bar x|^2\quad\textrm{in}\quad \{\bar u > 0\}\cap B_r(\bar x),
\]
then
\[
(\Delta+\de_{bb}^2) v \geq 0.
\]

By the maximum principle, if $\Omega_r := \{\bar u > 0\}\cap B_r(\bar x)$ then
\[
0< \bar u (x_1) \leq \sup_{\Omega_r}\,v = \sup_{\de\Omega_r}\,v.
\]
Since $v < 0$ in $\de\{\bar u > 0\} \cap B_r(\bar x)$,
\[
0 < \sup_{\{\bar u > 0\} \cap \de B_r(\bar x)} v \leq \sup_{\de B_r(\bar x)}\bar u - c r^2,
\]
where $c = \frac{\bar c}{2(n+\|b\|^2)}$. Therefore, $c$ is independent of $\bar x$, and we can let $\bar x \to x_0$, to obtain the desired result.
\end{proof}


\begin{thebibliography}{00}

\bibitem[BFR15]{BFR15} B. Barrios, A. Figalli, X. Ros-Oton, \emph{Global regularity for the free boundary in the obstacle problem for the fractional Laplacian}, preprint arXiv (June 2015).

\bibitem[CF11]{CF11} L. Caffarelli, A. Figalli, \emph{Regularity of solutions to the parabolic fractional obstacle problem}, J. Reine Angew. Math. 680 (2011), 191-233.

\bibitem[CSS08]{CSS08}  L. Caffarelli, S. Salsa, L. Silvestre, \emph{Regularity estimates for the solution and the free boundary of the obstacle problem for the fractional Laplacian}, Invent. Math. 171 (2008), 425-461.

\bibitem[CS09]{CS09} L. Caffarelli, L. Silvestre, \emph{Regularity theory for fully nonlinear integro-differential equations}, Comm. Pure Appl. Math. 62 (2009), 597-638.

\bibitem[CRS16]{CRS16} L. Caffarelli, X. Ros-Oton, J. Serra, \emph{Obstacle problems for integro-differential operators: regularity of solutions and free boundaries}, Invent. Math., to appear.

\bibitem[CD16]{CD16} H. Chang-Lara, G. Dávila, \emph{Hölder estimates for non-local parabolic equations with critical drift}, J. Differential Equations 260 (2016), 4237-4284.

\bibitem[CD01]{CD01} O. Chkadua, R. Duduchava, \emph{Pseudodifferential equations on manifolds with boundary: Fredholm property and asymptotics}, Math. Nachr. 222 (2001), 79-139.

\bibitem[CL14]{CL14} H. Chang-Lara, G. Dávila, \emph{Regularity for solutions of non local parabolic equations}, Calc. Var. Partial Differential Equations 49 (2014), 139-172.

\bibitem[DS16]{DS16} D. De Silva, O. Savin, \emph{Boundary Harnack estimates in slit domains and applications to
thin free boundary problems}, Rev. Mat. Iberoam. 32 (2016), 891-912.

\bibitem[DPV12]{DPV12} E. Di Nezza, G. Palatucci, E. Valdinoci, \emph{Hitchhiker's guide to the fractional Sobolev spaces}, Bull. Sci. Math., 136 (2012), 521-573.

\bibitem[GP09]{GP09} N. Garofalo, A. Petrosyan, \emph{Some new monotonicity formulas and the singular set in the
lower dimensional obstacle problem}, Invent. Math. 177 (2009), 415-461.

\bibitem[JN16]{JN16} Y. Jhaveri, R. Neumayer, \emph{Higher regularity of the free boundary in the obstacle problem for the fractional Laplacian}, preprint arXiv (June 2016).

\bibitem[GPPS16]{GP16} N. Garofalo, A. Petrosyan, C. A. Pop, M. Smit Vega Garcia, \emph{Regularity of the free boundary for the obstacle problem for the fractional Laplacian with drift}, Ann. Inst. H. Poincaré Anal. Non Linéaire., to appear.

\bibitem[KPS15]{KPS15} H. Koch, A. Petrosyan, W. Shi, \emph{Higher regularity of the free boundary in the elliptic
Signorini problem}, Nonlinear Anal. 126 (2015), 3-44.

\bibitem[KRS16]{KRS16} H. Koch, A. Petrosyan, W. Shi, \emph{Higher regularity for the fractional thin obstacle problem}, preprint arXiv (May 2016).

\bibitem[KKP16]{KKP16} J. Korvenpää, T. Kuusi, G. Palatucci, \emph{The obstacle problem for nonlinear integro-differential operators}, Calc. Var. Partial Differential Equations, 55 (2016), no. 3, Art. 63.

\bibitem[PP15]{PP15} A. Petrosyan, C. A. Pop, \emph{Optimal regularity of solutions to the obstacle problem for the fractional Laplacian with drift}, J. Funct. Anal. 268 (2015), 417-472.

\bibitem[RS14]{RS14} X. Ros-Oton, J. Serra, \emph{Boundary regularity for fully nonlinear integro-differential equations}, Duke Math. J. 165 (2016), 2079-2154.

\bibitem[RS15]{RS15} X. Ros-Oton, J. Serra, \emph{Boundary regularity estimates for nonlocal elliptic equations in $C^1$ and $C^{1,\alpha}$ domains}, preprint arXiv (Dec. 2016).

\bibitem[RS16]{RS16} X. Ros-Oton, J. Serra, \emph{Regularity theory for general stable operators}, J. Differential Equations 260 (2016), 8675-8715.

\bibitem[RS16b]{RS16b} X. Ros-Oton, J. Serra, \emph{The boundary Harnack principle for nonlocal elliptic operators in non-divergence form}, preprint arXiv (Oct. 2016).

\bibitem[Sal12]{Sal12} S. Salsa, \emph{The problems of the obstacle in lower dimension and for the fractional Laplacian}, Regularity estimates for nonlinear elliptic and parabolic problems. Lecture Notes in Math. 2045, Springer, Heidelberg, (2012) 153-244.

\bibitem[Ser15]{Ser15} J. Serra, \emph{Regularity for fully nonlinear nonlocal parabolic equations with rough kernels}, Calc. Var. Partial Differential Equations 54 (2015), 615-629.

\bibitem[Sil07]{Sil07} L. Silvestre, \emph{Regularity of the obstacle problem for a fractional power of the Laplace operator}, Comm. Pure Appl. Math. 60 (2007), 67-112.

\bibitem[SS16]{SS16} L. Silvestre, R. Schwab, \emph{Regularity for parabolic integro-differential equations with very irregular kernels} Anal. PDE 9 (2016) 727-772.

\bibitem[S94]{S94} E. Shargorodsky, \emph{An Lp-analogue of the Vishik-Eskin theory}, Mem. Differential Equations Math. Phys. 2 (1994), 41-146.

\end{thebibliography}
\end{document}